\newtheorem{theorem}{Theorem}[section]
\newtheorem{proposition}[theorem]{Proposition}
\newtheorem{lemma}[theorem]{Lemma}
\theoremstyle{remark}
\newtheorem{remark}[theorem]{Remark}
\definecolor{darkblue}{rgb}{0.0,0.0,0.4}
\newcommand{\N}{\mathbb{N}}
\newcommand{\R}{\mathbb{R}}
\newcommand{\eps}{\varepsilon}
\newcommand{\vol}{\mathrm{vol}}
\newcommand{\diam}{\mathrm{diam}}
\newcommand{\osc}{\mathrm{osc}}
\newcommand{\len}{|\!|\!|}
\newcommand{\ren}{|\!|\!|}
\newcommand{\exd}{\mathrm{d}}
\newcommand{\supp}{\mathrm{supp}}
\newcommand{\dist}{\mathrm{dist}}
\newcommand{\intr}{\mathrm{int}}
\newcommand{\card}{\mathrm{card}}
\newcommand{\adm}{\mathrm{adm}}
\newcommand{\spn}{\mathrm{span}}
\newcommand{\refine}{\mathrm{refine}}
\newcommand{\shape}{\sigma_{\textrm{s}}}
\newcommand{\grade}{\sigma_{\textrm{g}}}
\newcommand{\supb}{{\Omega}}
\newcommand{\mns}{{\nu}}
\begin{document}

\title{Adaptive boundary element methods with convergence rates}
\author{Tsogtgerel Gantumur}
\institution{McGill University}
\date{\today}

\maketitle


\begin{abstract}
This paper presents adaptive boundary element methods for positive, negative, as well as zero order operator equations, 
together with proofs that they converge at certain rates.
The convergence rates are quasi-optimal in a certain sense under mild assumptions that are analogous to what is typically assumed in 
the theory of adaptive finite element methods.
In particular, no saturation-type assumption is used.
The main ingredients of the proof that constitute new findings are some results on \emph{a posteriori} error estimates for boundary element methods,
and an inverse-type inequality involving boundary integral operators on locally refined finite element spaces.
\end{abstract}

\section{Introduction}

Let $\Gamma$ be a (closed or open) polyhedral surface in $\R^3$.
We consider equations of the form
\begin{equation}
  \label{e:op0}
  Au=f,
\end{equation}
where $f\in H^{-t}(\Gamma)$, and $A:H^{t}(\Gamma)\to H^{-t}(\Gamma)$
is an invertible linear operator.
Strictly speaking, the function spaces should be slightly modified if $\Gamma$ is an open surface or $t>0$,
but for the sake of this introduction we will gloss over this point.
The operators of interest are the \emph{boundary integral operators} that arise
from reformulations of boundary value problems as integral equations
on the domain boundary, cf. \cite{McL00}.
Then the problem \eqref{e:op0} corresponds to a \emph{boundary integral
equation},
and a very popular class of methods for its numerical solution is 
\emph{boundary element methods} (BEM),
which can crudely be described as finite element methods (FEM) applied to boundary
integral equations, cf. \cite{SS11}.
As with finite elements, there is the \emph{adaptive} version of BEM,
whose main feature is to automatically distribute mesh points hopefully in an optimal way
so as to obtain an accurate numerical solution.
Even though those methods perform very well in practice, their
mathematical theory is not in a very satisfactory state, especially
if one compares it with the corresponding theory of adaptive FEM.
In the latter context, the sequence of papers
\citet{Dorf96}, \citet*{MNS02}, \citet*{BDD04}, \cite{Stev07}, and
\citet*{CKNS08}
has laid a fairly satisfactory foundation to mathematical understanding of
adaptive FEM for linear elliptic boundary value
problems.
Specifically, it was established that standard adaptive FEM generate a sequence of solutions whose error decreases
geometrically, and that the number of triangles in the mesh grows with
an optimal rate.
Moreover, rigorous treatments of numerical integration and linear algebra solvers are within reach.

In parallel to the above, a very general theory of adaptive wavelet methods has been developed, cf.
\citet*{CDD01,CDD02,GHS07}. 
Under this framework, one can analyze convergence rates and complexity of fully discrete adaptive wavelet methods for boundary integral equations, cf. \cite*{Stev04,GS06b,DHS07}

However, there has been a significant gap in the current mathematical understanding
of adaptive BEM proper. 
The first steps toward closing the gap have been
taken in \cite{CP09}, where convergence is guaranteed for an adaptive
BEM with a feedback control that occasionally adds uniform refinements,
in \citet*{FLOP10} where geometric error reduction is proven under
a \emph{saturation assumption},
and in \citet*{AFLP10}, where convergence is established under a weak
saturation-type assumption.
To give an idea of what a saturation assumption is,
one form of it requires that if $u_k$ is the numerical solution of \eqref{e:op0} at
the current stage of the algorithm execution, and if $\hat{u}_k$ is
the would-be numerical solution had we replaced the current mesh by
its uniform refinement, then $\|u-u_k\|_{H^t(\Gamma)}\leq \beta \|\hat{u}_k-u_k\|_{H^t(\Gamma)}$,
where $\beta>0$ is a constant.
Such assumptions were common in the finite element literature 
before \cite{Dorf96} and \cite{MNS02} proved geometric
error reduction without relying on a saturation assumption.

In this paper, we prove geometric error reduction for
three kinds of adaptive boundary element methods for positive ($t>0$), negative ($t<0$), as well as zero
order operator equations, without using a saturation-type assumption.
In fact, several types of saturation assumptions follow from our work as a corollary.
Moreover, bounds on the convergence rates are obtained that are in
a certain sense optimal.

Essentially at the same time as this work became available as an arXiv preprint, and independently of this work, preprints
\citet*{FKMP11,FKMP11a} appeared in which the authors prove
the same type of results for an adaptive BEM for a negative
order operator equation, by methods that are not dissimilar but largely complementary to ours.
In a certain sense, the results of this work and of \citet{FKMP11,FKMP11a} together
bring the mathematical understanding of adaptive BEM to the level comparable to that of adaptive FEM.

A large chunk of the techniques developed in the adaptive finite element
theory is not specific to differential equations, 
providing a nice starting point for us and a clue on what ingredients are
missing in the boundary element theory.
In this work, we needed to supply two kinds of ingredients that constitute new findings:
some results on \emph{a posteriori} error estimators for BEM and an
inverse-type inequality involving boundary integral operators and
locally refined meshes.
The issue with the theory of error estimators has been most obvious;
what is usually guaranteed is only one of the two inequalities that are necessary for a
quantity to resemble the error.
There is a very few estimators with both \emph{upper and lower bounds} proven; one can
mention the estimators proposed in \cite{Faer98,Faer00,Faer02}, see
\cite{CF01} for a more thorough discussion. 
To name some of the relatively recent works on this subject,
we have \citet*{CMS01}, \citet*{CMPS04},
and \citet*{NPZ10}, where upper bounds for certain residual-type 
estimators are established,
and \citet*{EFLFP09}, and \citet*{EFGP09}, where upper and lower bounds are proven for a large number of non-residual type estimators, with the upper bounds depending on various forms of saturation assumptions.
Now, even if one had both upper and lower bounds for an error estimator, one still
needs a so-called \emph{local discrete bound} before attempting to apply the
general techniques from the finite element theory. 
Such an estimate has been entirely open for boundary element methods.

In this work, we establish all missing bounds for a number of residual-type error estimators for positive, negative, as well as zero order
boundary integral equations, including the estimators from
\cite{CMS01}, \cite{CMPS04}, and \cite{Faer00,Faer02}.
The recent papers \cite{FKMP11,FKMP11a} also contain similar results regarding the estimator from \cite{CMS01}.
Some of our bounds involve the so-called \emph{oscillation terms},
that give useful estimates on how far the current mesh is from saturation.
Note that analogous terms also arise in the finite element theory.
In order to handle the oscillation terms, which turned out to be not straightforward,
we prove an \emph{inverse-type inequality} involving
boundary integral operators and locally refined meshes.
Our proof of the inverse-type inequality in general requires the underlying surface $\Gamma$ to be
$C^{1,1}$ or smoother, but for open surfaces it allows the boundary of
$\Gamma$ to be Lipschitz.
So in general, polyhedral surfaces are ruled out. 
However, this is very likely an artifact of the proof,
since in \cite{FKMP11,FKMP11a},
the inequality is proven for a model negative order operator on polyhedral surfaces.

This paper is organized as follows.
In the next section, we fix the general setting of the paper, and
recall some basic results that will be used throughout the paper.
Then in the following three sections, namely in \S\ref{s:zero},
\S\ref{s:positive}, and \S\ref{s:negative}, we consider adaptive BEMs
for zero, positive, and
negative order operator equations, respectively.
In each of the three cases, we first study certain residual based \emph{a
posteriori} error estimators, then design an adaptive BEM based on those
estimators, and finally address the question of
convergence rate.
There are some general arguments and remarks that can be applied to
all of the three cases, and so in order not to be too repetitive,
they shall be discussed in \S\ref{s:zero} for the zero order case
in detail, and then in \S\ref{s:positive} and \S\ref{s:negative}, 
we will simply refer to them if needed.
The entire analysis depends on an inverse-type inequality involving
boundary integral operators and locally refined meshes, 
which then is verified in \S\ref{s:inverse} for a general class of boundary
integral operators.
We will conclude the paper by summarizing the results and making a
list of important open problems.

\section{Generalities}
\label{s:general}

In this section, we will set up the necessary vocabulary and collect some basic results that will be used throughout the paper.
Let $\Omega$ be a compact closed, $n$-dimensional, patchwise smooth, globally
$C^{\mns-1,1}$ manifold,
which will be the habitat of all functions and distributions that ever
occur in this paper.
In practice, we typically have $n=1$ or $n=2$,
so in the discussions that follow we often use the language of $n=2$, 
and we shall indicate whenever we lose generality by implicitly restricting to the
case $n=2$.
We will assume that each smooth (closed) patch of $\Omega$ is
diffeomorphic to a polygon,
so one can think of $\Omega$ as the surface of a bounded polyhedron, with
faces and edges now allowed to be smooth surfaces and curves, respectively.

\subsection{Sobolev spaces}
For $s\in[-\mns,\mns]$, we denote by $H^{s}(\Omega)$ the usual Sobolev
space of order $s$ on $\Omega$.
Let $\omega\subseteq\Omega$ be an open subset of $\Omega$ with Lipschitz boundary.
Then we define the following two kinds of Sobolev spaces
\begin{equation}\label{e:sobolev}
H^s(\omega)=\{u|_\omega:u\in H^s(\Omega)\},
\qquad\textrm{and}\qquad
\tilde{H}^s(\omega)=\{u\in H^s(\Omega):\supp\,u\subseteq\overline{\omega}\},
\end{equation}
for $s\in[-\mns,\mns]$.
Obviously $\tilde{H}^s(\Omega)=H^s(\Omega)$, and it is known that
$\tilde{H}^s(\omega)^* = H^{-s}(\omega)$, cf. \cite{McL00}.
Note also that $\tilde{H}^s(\omega)$ is a closed subspace of
$H^s(\omega)$ for $s\geq0$.
The definitions \eqref{e:sobolev} give rise to the canonical norms on $H^s(\omega)$ and $\tilde{H}^s(\omega)$ inherited from the norm on $H^s(\Omega)$,
and at least for $s>0$, these norms are known to be equivalent to certain norms defined either by interpolation, or in terms of moduli of smoothness, with the equivalence constants depending only on the dimension $n$, the order $s$, the Lipschitz constant of $\omega$, and the particulars (i.e., the local coordinate patches and the partition of unity) in the definition of $H^s(\Omega)$.
Let us expand on this a bit.
Since the spaces with $s>1$ are of secondary importance to us, for simplicity here we focus only on the case $s\in[0,1]$, hence by duality on $|s|\leq1$.
With $\|\cdot\|_{\omega}$ and $|\cdot|_{1,\omega}$ denoting the $L^2$-norm and the usual $H^1$-seminorm on $\omega$, respectively, we define $|\cdot|_{H^s(\omega)}$ to be the interpolatory seminorm between $\|\cdot\|_{\omega}$ and $|\cdot|_{1,\omega}$,
for concreteness by using the $K$-functional.
Then $\|\cdot\|_{\tilde{H}^s(\omega)}:=|\cdot|_{H^s(\omega)}$ is a norm on $\tilde{H}^s(\omega)$, which can be made into a norm $\|\cdot\|_{H^s(\omega)}$ on $H^s(\omega)$ by combining it with the $L^2$-norm.
It can be shown that these norms are equivalent to the canonical norms on $\tilde{H}^s(\omega)$ and respectively $H^s(\omega)$,
with equivalence constants depending only on $n$, $s$,  and the particulars of the definition of $H^s(\Omega)$, cf. \cite{McL00}.
Moreover, we have the following useful property that if $\omega_1,\ldots,\omega_k$ are disjoint Lipschitz domains such that $\bigcup_i\overline\omega_i=\overline\omega$,
then for $|s|\leq1$ we have
\begin{equation}\label{e:sobolev-add}
\sum_i \|u\|_{H^s(\omega_i)}^2 \leq C_s \|u\|_{H^s(\omega)}^2,
\qquad\textrm{and}\qquad
\|v\|_{\tilde{H}^s(\omega)}^2 \leq C_s \sum_i \|v\|_{\tilde{H}^s(\omega_i)}^2,
\end{equation}
for $u\in H^{s}(\omega)$ and $v\in \tilde{H}^{s}(\omega)$,
with the constant $C_s>0$ depending only on $s$,
and the norms for $s<0$ defined by duality, cf. \citet*{AMT99} and \cite{vPet89}.
Another interesting norm on $H^{s}(\omega)$ for $s\in(0,1)$, is the \emph{Slobodeckij norm}
\begin{equation}\label{e:slobodeckij}
\|v\|_{s,\omega}^2 = \|v\|_{\omega}^2 +|v|_{s,\omega}^2,
\qquad\textrm{with}\qquad
|v|_{s,\omega}^2 = \iint\limits_{\omega\times\omega} \frac{|v(x)-v(y)|^2}{|x-y|^{2+2s}}\exd x \exd y,
\end{equation}
where $\|v\|_{\omega}$ denotes the $L^2$-norm on $\omega$.
It is immediate that if $\omega_1,\ldots,\omega_k$ are disjoint and Lebesgue measurable sets such that $\bigcup_i\omega_i\subseteq\omega$,
then for $s>0$ we have
\begin{equation}\label{e:slobodeckij-add}
\sum_i |v|_{s,\omega_i}^2 \leq |v|_{s,\omega}^2,
\qquad v\in H^{s}(\omega),
\end{equation}
which is the counterpart of the first inequality in \eqref{e:sobolev-add}.
The seminorms $|\cdot|_{H^s(\omega)}$ and $|\cdot|_{s,\omega}$ are equivalent, with the equivalence constants depending only on $n$, $s$, the Lipschitz constant of $\omega$, and as usual the particulars of the definition of $H^s(\Omega)$. This fact can be proven as in \cite{McL00} by relating the both seminorms to the canonical norms defined through the norm on $H^s(\Omega)$.
A more direct way to relate the two seminorms would be to first connect the Slobodeckij seminorm with the Besov-style seminorm defined by moduli of continuity, 
and then use the equivalence between the moduli of continuity and the $K$-functional, established in \cite{JS77}.
Note that one has to inspect (and adapt) the proofs in \cite{McL00} and \cite{JS77} to reveal the relevant information on the equivalence constants.
It should be emphasized that since we shall be dealing with an infinite collection of domains, in partiuclar of sizes that can shrink to zero,
when using norm equivalences one must be careful about ensuring a control over the equivalence constants.
In the current setting, the equivalence constants do not depend on the size of $\omega$,
and the Lipschitz constants are controlled by restricting the class of domains to shape regular triangles, see below \eqref{e:shape-regular}.
This can also be seen directly from the fact that our refinement procedures lead to only finitely many equivalence classes of triangles.
In the proofs, we make an effort to use the interpolatory norms for as long as possible,
and so to apply the norm equivalence only when necessary.

\subsection{The operator equation}
In the following, we fix $\Gamma\subseteq\Omega$ to be the whole of
$\Omega$, or a connected open set whose boundary consists of curved polygons.
This will be the domain on which we consider our main operator equation
\begin{equation}
  \label{e:op1}
  Au=f,
\end{equation}
where $f\in H^{-t}(\Gamma)$, and $A:\tilde{H}^{t}(\Gamma)\to
H^{-t}(\Gamma)$ is a linear
homeomorphism.
We assume that the operator $A$ is self-adjoint and satisfies
\begin{equation}
  \label{e:elliptic}
  \langle Av,v\rangle\geq\alpha\|v\|_{\tilde{H}^t(\Gamma)}^2,
  \qquad
  \|Av\|_{H^{-t}(\Gamma)}\leq\beta\|v\|_{\tilde{H}^t(\Gamma)},
\end{equation}
for $v\in\tilde{H}^t(\Gamma)$,
with some constants $\alpha>0$ and $\beta>0$, where
$\langle\cdot,\cdot\rangle$ is the duality pairing between
$H^{-t}(\Gamma)$ and $\tilde{H}^t(\Gamma)$.
We introduce the \emph{energy norm}
$\len\cdot\ren=\langle A \cdot,\cdot\rangle^{1/2}$,
and note that it is equivalent to the $\tilde{H}^t(\Gamma)$-norm
\begin{equation}
  \label{e:eneql2}
  \sqrt\alpha\|\cdot\|_{\tilde{H}^t(\Gamma)}\leq\len\cdot\ren\leq\sqrt\beta\|\cdot\|_{\tilde{H}^t(\Gamma)}.
\end{equation}
We also have the norm equivalence
\begin{equation}\label{e:res-eq}
\alpha\|\cdot\|_{\tilde{H}^t(\Gamma)}\leq\|A\cdot\|_{H^{-t}(\Gamma)}\leq\beta\|\cdot\|_{\tilde{H}^t(\Gamma)},
\end{equation}
which is the basis of all residual based error estimation techniques.


Suppose that a closed linear subspace $S\subset \tilde{H}^t(\Gamma)$ is given.
Then the \emph{Galerkin approximation} $u_S\in S$ of $u$ from the space $S$ is characterized by
\begin{equation}\label{e:Galerkin}
\langle Au_S,v\rangle = \langle f,v\rangle,
\qquad
\forall v\in S.
\end{equation}
We have the \emph{Galerkin orthogonality} 
\begin{equation}\label{e:gal-orth}
\len u-u_S \ren^2 + \len u_S-v \ren^2 = \len u-v \ren^2,
\qquad
\forall v\in S,
\end{equation}
which implies that $u_S$ is the best approximation of $u$ from $S$
in the energy norm,
and that the Galerkin approximation is stable:
\begin{equation}\label{e:gal-proj}
  \sqrt\alpha\|u_S\|_{\tilde{H}^t(\Gamma)}\leq\len u_S\ren\leq\len u\ren\leq\frac{1}{\sqrt\alpha}\|f\|_{H^{-t}(\Gamma)}.
\end{equation}
In light of \eqref{e:res-eq}, the residual $r_S=f-Au_S$ is equivalent to the error:
\begin{equation}\label{e:res}
\alpha\|u-u_S\|_{\tilde{H}^t(\Gamma)}\leq\|r_S\|_{H^{-t}(\Gamma)}\leq\beta\|u-u_S\|_{\tilde{H}^t(\Gamma)}.
\end{equation}
In the sense that the residual is a computable quantity that gives bounds on
the true error in terms of this equivalence,
the first inequality in \eqref{e:res} is an example of a \emph{global upper
bound},
while the second one is that of a \emph{global lower bound}.
Upper and lower bounds in this context are also called \emph{reliability} and
\emph{efficiency}, respectively.
The central issue in the theory of residual based error indicators is to
somehow {\em localize} the quantity $\|r_S\|_{H^{-t}(\Gamma)}$ so as to obtain a useful information on which part of $\Gamma$ needs more attention.

\subsection{Triangulations}
We study the Galerkin approximation by
piecewise constant or continuous piecewise linear functions on adaptively generated triangulations of
the manifold $\Gamma$.
Let us now fix some notations and terminologies related to this
discretization.
An open subset of $\Gamma$ is called a \emph{(surface) triangle}
if its closure is diffeomorphic to a flat triangle, 
and the latter is said to be the \emph{reference} of the former.
Assuming that a choice is made of a reference for each surface
triangle,
notions related to flat triangles can be planted onto surface
triangles through their references.
For instance, in the following, straight lines, midpoint, etc., should
be understood in terms of the reference triangles.
We call a collection $P$ of surface triangles a {\em partition} of $\Gamma$
if  $\overline\Gamma=\bigcup_{\tau\in P}\overline\tau$, and
$\tau\cap\sigma=\varnothing$ for any two different $\tau,\sigma\in P$. 
For refining the meshes we mainly use the so called {\em newest vertex
  bisection} algorithm, 
which we describe now for the reader's convenience.
General discussions on this algorithm can be found e.g., in \citet*{BDD04,Stev08}.
We assume that with any triangle $\tau$ comes its
associated newest vertex $v(\tau)$, so that when it is needed to
be refined, $\tau$ is subdivided
into two triangles by connecting $v(\tau)$ with the midpoint of
the edge opposite to it.
The midpoint used in the bisection is now the newest vertex of the both
new triangles.
The 2 new triangles so obtained are called the {\em children} of
$\tau$, and the refinement of $\tau$ is just the collection of
its children. 
The children of a triangle inherit the reference map from their parent.
A partition $P'$ is called a refinement of $P$ and
denoted $P\preceq P'$ if $P'$
can be obtained by replacing zero or more $\tau\in P$ by its
children, or by a recursive application of this procedure.
This procedure is extended to higher dimensions in \cite{Stev08}.


A partition $P$ is said to be {\em conforming} if any vertex $v$ of a
triangle in $P$ is a vertex of all $\tau\in P$ whose closure contains $v$. 
Throughout this paper we consider only partitions that are
refinements of some {\em fixed} conforming partition $P_0$ of
$\Gamma$. 
We require that the references of the initial partition $P_0$ be so
that for any pair of surface triangles that share a common edge,
the parameterizations from the reference triangles to the common edge
are equal up to the composition with an affine map.
The motivation for this is that we want the refinements on both
triangles to agree on the common edge.
A choice of refinement procedure immediately leads to 
the set $[P_0]$ of all partitions that are refinements of $P_0$.
We assume that the family $[P_0]$ is {\em shape regular}, meaning that 
\begin{equation}\label{e:shape-regular}
\shape=\sup \left\{ \frac {h_\tau^n}{\vol(\tau)} : \tau\in P,\,P\in[P_0]
  \right\} < \infty,
\end{equation}
where $h_\tau=\diam(\tau)$.
Both the newest vertex bisection and the red refinement procedures
produce shape regular partitions.
The set $[P_0]$ is too large in the sense that often we are interested
in a certain subset of it that has a good analytic property,
e.g., we want to single out the conforming partitions from $[P_0]$.
Exactly what subset we want depends on the particular setting,
and at this level of generality we simply assume that there is a
subset $\adm(P)\subset[P_0]$ called the family of \emph{admissible partitions},
which is \emph{graded} (or \emph{locally quasi-uniform}, or have the \emph{K-mesh
property}),
i.e., 
\begin{equation}\label{e:K-mesh}
\grade=\sup \left\{\frac {h_\sigma}{h_\tau}: \sigma,\tau\in
P,\,\overline\sigma\cap\overline\tau\neq\varnothing,\,P\in\adm(P_0)\right\} < \infty.
\end{equation}
For example, if $n\geq2$ then the conforming refinements of $P_0$ produced by the
newest vertex bisection are locally quasi-uniform.
If $n=1$, 
we \emph{define} the
admissible partitions to be the ones for which the quantity under the
supremum in \eqref{e:K-mesh} is bounded by a fixed number.
Finally, note that the shape regularity and local quasi-uniformity
together imply \emph{local finiteness}, meaning that the number of triangles
meeting at any given point is bounded by a constant that depends only
on $\shape$, $\grade$, and $n$.

The admissible partitions are the only ones that are ``visible'' to the analytic components of the algorithms.
Hence from both analytic and algorithmic perspectives,
it is convenient to separate the ``analytic'' components that only see admissible partitions,
from the ``combinatoric'' components that make possible the illusion that there are only admissible partitions.
These ``combinatoric'' issues are common to both the FEM and BEM, and mostly settled.
We take them into account by assuming the existence of a couple of operations on admissible partitions.
The first operation is that of refinement, which in practice is implemented by a usual naive refinement possibly producing a non-admissible partition,
followed by a so-called {\em completion} procedure.
Given a partition $P\in\adm(P_0)$ and a set $R\subset P$ of its triangles,
the refinement procedure produces $P'\in\adm(P_0)$,
such that $P\setminus P'\supseteq R$, i.e., the triangles in $R$ are refined at least once.
Let us denote it by $P'=\refine(P,R)$.
We assume the following on its efficiency:
If $\{P_k\}\subset\adm(P_0)$ and $\{R_k\}$ are sequences such that $P_{k+1}=\refine(P_k,R_k)$
and $R_k\subset P_k$ for $k=0,1,\ldots$,
then
\begin{equation}\label{e:complete}
\#P_k-\#P_0\leq C_c\sum_{m=0}^{k-1} \#R_m,
\qquad k=1,2,\ldots,
\end{equation}
where $C_c>0$ is a constant.
This assumption is justified for newest vertex bisection algorithm in
\cite{BDD04,Stev08},
and demonstrated for a 1D refinement procedure in \cite*{AFFKP12}.

Another notion we need is that of \emph{overlay} of partitions:
We assume that there is an operation
$\oplus:\adm(P_0)\times\adm(P_0)\to\adm(P_0)$ satisfying
\begin{equation}\label{e:overlay}
P\oplus Q\succeq P,
\qquad
P\oplus Q\succeq Q,
\qquad
\textrm{and}
\qquad
\#(P\oplus Q)\leq\#P+\#Q-\#P_0,
\end{equation}
for $P,Q\in\adm(P_0)$.
In the conforming world, $P\oplus Q$ is taken to be the smallest
and common conforming refinement of $P$ and $Q$, for which
\eqref{e:overlay} is demonstrated in \cite{CKNS08}.
For a 1D refinement procedure, a justification is given in \cite{AFFKP12}.

\subsection{Discretization}
Given a partition $P\in[P_0]$, we define the piecewise polynomial space ${S}_{P}^d$ by
\begin{equation}\label{e:fem-space}
{S}_{P}^d=\left\{u\in L^2(\Gamma): u\in C(\Gamma) \textrm{ if }d>0,
\textrm{ and } u|_{\tau}\in \mathbb{P}_{d}\,\forall\tau\in P\right\},
\end{equation}
where $\mathbb{P}_{d}$ denotes the set of polynomials of degree less
than or equal to $d$.
Note that for curved triangles, polynomials are defined through the
reference triangles.
We have $S_P^d\subset H^s(\Gamma)$ for $s<\frac12$ if $d=0$, and $s<\frac32$ if $d>0$, with $|s|\leq\mns$ in both cases.
Of interest to us are only the spaces $S_P^0$ of piecewise constants
and $S_P^1$ of continuous piecewise linears.
We will also employ a slight variation of $S_P^1$,
that is the space $\tilde{S}_P^1$ of
piecewise affine functions that vanish on the boundary of $\Gamma$.

Now we collect some estimates relating different Sobolev norms for finite element spaces and their complements.
We will indicate if the constants involved in the estimates depend on parameters
(such as $\shape$) other than $d$ and $n$.
First of all, we recall Faermann's estimate
\begin{equation}\label{e:slobodeckij-faermann}
\|v\|_{s,\Gamma}^2 \leq C_F \sum_{z\in N_P} |v|_{s,\omega(z)}^2,
\qquad v\in H^{s}(\Gamma),\,v\perp_{L^2} S_P^0,
\quad(s\in[0,1]),
\end{equation}
for all admissible partitions $P\in\adm(P_0)$,
with $C_F=C_F(\shape,\grade)$, cf. \cite{Faer00,Faer02}.
Here $N_P$ is the set of vertices in $P$,
and $\omega(z)=\intr\bigcup_{\{\tau\in
P:z\in\overline\tau\}}\overline\tau$ is called the \emph{star} associated to the vertex
$z$, with ``$\intr$'' denoting the interior.
The same estimate for interpolatory norms has been established in \citet*{CMS01},
by a very flexible technique.
We will be using their technique on several occasions in \S\ref{s:negative}.

For a partition $P\in[P_0]$,
let $h_P\in S_P^0$ be such that $h_P(x)=h_\tau$ for $x\in\tau\in P$.
We introduce the space $H^r(\Gamma,P_0)$ for $r>0$ as the space of
functions $v\in L^2(\Omega)$ with $v|_\tau\in H^r(\tau)$ for every
triangle $\tau\in P_0$.
Now for $v\in H^s(\Gamma,P_0)$ with $s\in[0,1]$, let $v_P\in S_P^0$ be the $L^2$-orthogonal projection of $v$ onto $S_P^0$.
Then we have the direct estimate
\begin{equation}\label{e:direct-l2}
\|v-v_P\|_{\tau} \leq C_Jh_\tau^s|v|_{s,\tau},
\qquad \tau\in P,
\end{equation}
where the constant $C_J=C_J(\shape)$.
An immediate consequence is that
\begin{equation}\label{e:direct-pos}
\|h_P^r(v-v_P)\|_{\gamma}^2\leq C_J^2\sum_{\tau\in Q}h_\tau^{2(r+s)}|v|_{s,\tau}^2,
\end{equation}
for
$Q\subseteq P$, $\gamma=\intr\bigcup_{\tau\in Q}\overline\tau$, and
$r\in\R$.
By using a duality argument, and the bounds \eqref{e:direct-pos} and \eqref{e:slobodeckij-add}, one can show also that
\begin{equation}\label{e:direct-neg}
\|v-v_P\|_{\tilde{H}^{-s}(\gamma)} \leq C_J\|h_P^sv\|_{\gamma},
\qquad
v\in L^2(\Gamma).
\end{equation}

For the continuous piecewise linears,
the $L^2$-projection is nonlocal,
and so for convenience
we will employ a quasi-interpolation operator $Q_P:L^2(\Gamma)\to\tilde{S}_P^1$ that satisfies
\begin{equation}\label{e:direct-quasi}
\|v-Q_Pv\|_{r,\gamma} \leq C_J \left(\max_{\tau\in Q}h_\tau\right)^{s-r}|v|_{s,\omega(\gamma)},
\qquad v\in\tilde{H}^s(\Gamma),
\end{equation}
for $0\leq r\leq s\leq1$ and $P\in\adm(P_0)$,
where $\omega(\gamma)=\intr\bigcup_{\{\sigma\in
P:\overline\sigma\cap
\overline\gamma\neq\varnothing\}}\overline\sigma$,
and if $n\geq2$, $\adm(P_0)$ is understood to be the
conforming partitions created by newest vertex bisections from $P_0$.
Recall also that $\tilde{S}_P^1$ is the subspace of
$S_P^1$ with the homogeneous boundary condition.
By a \emph{quasi-interpolation} we mean
that $(Q_Pv)|_{\tau}=(Q_Pv|_{\omega(\tau)})|_{\tau}$.
Examples of such operators are constructed, e.g., in
\cite{Clem75,SZ90,Osw94,BG98}.
Accordingly, in this setting, the estimate \eqref{e:direct-pos} is replaced by
\begin{equation}\label{e:direct-pos-quasi}
\|h_P^r(v-Q_Pv)\|_{\gamma}^2
\leq C_J^2\sum_{\tau\in Q}h_\tau^{2(r+s)}|v|_{s,\omega(\tau)}^2.
\end{equation}
We stated the estimates \eqref{e:direct-l2} and \eqref{e:direct-quasi} in terms of the Slobodeckij norms,
but the usual way to derive these estimates is by interpolation and norm equivalences,
so in particular the same estimates hold with interpolatory norms.

Let us also recall the inverse estimates
\begin{equation}\label{e:std-inverse}
\|v\|_{H^s(\gamma)} \leq C_B\|h_P^{-s}v\|_{\gamma},
\qquad
v\in S_{P}^{d},
\end{equation}
for $s\in[0,\frac12)\cap[0,\mns]$ if $d=0$, and
for $s\in[0,\frac32)\cap[0,\mns]$ if $d>0$, and
\begin{equation}\label{e:new-inverse}
\|h_P^sv\|_{\gamma} \leq C_B\|v\|_{\tilde{H}^{-s}(\gamma)},
\qquad
v\in S_{P}^{d},
\end{equation}
for $s\in[0,\mns]$,
which hold for admissible partitions $P\in\adm(P_0)$, 
with the constant $C_B=C_B(\shape,\grade)$.
Recall that $\gamma=\intr\bigcup_{\tau\in Q}\overline\tau$ for some $Q\subseteq P$.
The both inequalities are proved in \citet*{DFGHS04} in a more general setting, with a piecewise affine mesh-size function $h_P$.
Nevertheless, by local quasi-uniformity, their results immediately imply \eqref{e:std-inverse}
and \eqref{e:new-inverse} with piecewise constant $h_P$,
since in our setting $h_P$ enters only in a derivative-free fashion.
Finally, we will make crucial use of inequalities of the type
\begin{equation}\label{e:inverse-ineq-gen}
\sum_{\tau\in P}h_\tau^{2(s+t)}\|Av\|_{s,\tau}^2 \leq C_A\|v\|_{\tilde{H}^{t}(\Gamma)}^2,
\qquad
v\in S_{P}^{d},
\end{equation}
that is assumed to hold for admissible partitions $P\in\adm(P_0)$, with $C_A=C_A(A,\shape,\grade)$.
This inequality is somewhat more demanding than the standard inverse estimates since it involves the non-local operator $A$,
and in some sense it requires $A$ to be almost local.
In fact, boundary integral operators have certain locality properties,
which is exploited in our analysis only through this inequality.
We prove it in Theorem \ref{t:inverse-ineq} for a wide class of boundary integral operators,
but in general allowing only $C^{1,1}$ surfaces.
\citet{FKMP11,FKMP11a} prove
\eqref{e:inverse-ineq-gen} for $s=1$, and $A$ equal to the simple
layer potential operator on polyhedral surfaces.

\section{Operators of order zero}
\label{s:zero}

In this section, we focus on the case where the operator $A$ is of order zero,
i.e., the case $t=0$.
This is a nice model case to test our arguments on.
On a practical side, 
this case is a representative of Hilbert-Schmidt operators on general
domains or manifolds, and a {\em model} of boundary integral operators associated
to the double layer potential.
In the latter case, for instance when the surface $\Gamma$ is $C^1$ so that the double layer potential operator is compact, one only has a G{\aa}rding-type inequality instead of
the strict coercivity \eqref{e:elliptic}, 
but we expect that such cases can be handled at the
expense of requiring a sufficiently fine initial mesh, in the spirit
of \cite{MN05} and \cite{Gant08}.

The surface $\Gamma$ can be either closed or open.
We employ the piecewise constants $S_P^0$.
The Galerkin approximation of $u$ from $S_P^0$ is denoted by $u_P\in S_P^0$,
and the corresponding residual --- by $r_P$.
Recall the
notations $\|\cdot\|$ and $\|\cdot\|_\omega$ for the $L^2$-norms on
$\Gamma$ and $\omega\subset\Gamma$, respectively.

The equivalence \eqref{e:res} provides the convenient starting point
\begin{equation}\label{e:global-zero}
\alpha\|u-u_P\|\leq\|r_P\|\leq\beta\|u-u_P\|,
\end{equation}
which suggests us to use local $L^2$-norms of the residual as error indicators.
Below we prove a localized version of \eqref{e:global-zero} with the error replaced by the
difference between two Galerkin approximations.
The simple observations in its proof are the essence
of this paper,
in the sense that the rest of the paper can be thought of as an attempt to exploit their natural consequences
and to extend the arguments to non-zero order operators.
Note that no explicit condition whatsoever is imposed on the locality of $A$.

\begin{lemma}\label{l:res-zero}
Let $P,P'\in[P_0]$ be partitions with $P\preceq P'$,
and let
$\Gamma^*=\intr\bigcup_{\tau\in P\setminus P'}\overline\tau$. Then we have
\begin{equation}\label{e:local-discrete}
  \alpha\|u_{P}-u_{P'}\|
  \leq
  \|r_{P}\|_{\Gamma^*} 
  \leq
  \beta\|u_{P}-u_{P'}\| + 2\|r_{P}-v\|_{\Gamma^*},
\end{equation}
for any function $v\in S_{P'}^0$.
\end{lemma}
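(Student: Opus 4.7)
The plan is to exploit two features specific to the zero-order setting: the duality pairing coincides with the $L^2$-inner product, so Galerkin orthogonality $r_P\perp_{L^2} S_P^0$ and the coercivity/continuity bounds in \eqref{e:elliptic} both operate with $\|\cdot\|$; and the refinement relation $P\preceq P'$ gives $S_P^0\subset S_{P'}^0$. The crucial geometric observation I would use is that any function $w\in S_{P'}^0$ is already constant on each unrefined triangle $\tau\in P\cap P'$, so if $w_P$ denotes its $L^2$-projection onto $S_P^0$, then $w-w_P$ vanishes off $\overline{\Gamma^*}$.

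For the lower bound, I would apply coercivity to $w:=u_{P'}-u_P\in S_{P'}^0$. Subtracting the Galerkin identities on $P'$ and $P$ produces $\langle Aw,w\rangle=\langle r_P,w\rangle$. I would then replace $w$ by $w-w_P$ on the right, which is legitimate by Galerkin orthogonality $\langle r_P,w_P\rangle=0$. Since $w-w_P$ is supported in $\overline{\Gamma^*}$, Cauchy--Schwarz followed by the contraction property $\|w-w_P\|\leq\|w\|$ of the $L^2$-projection delivers $\alpha\|w\|\leq\|r_P\|_{\Gamma^*}$.

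For the upper bound, I would fix $v\in S_{P'}^0$ and start from $\|r_P\|_{\Gamma^*}\leq\|r_P-v\|_{\Gamma^*}+\|v\|_{\Gamma^*}$. To control $\|v\|_{\Gamma^*}$, I would introduce the cutoff $v^*:=v\chi_{\Gamma^*}$, which also lies in $S_{P'}^0$ since $\Gamma^*$ is a union of triangles of $P$, hence of $P'$. Expanding $\|v^*\|^2=\langle v^*-r_P,v^*\rangle+\langle r_P,v^*\rangle$, the first term is bounded by $\|v-r_P\|_{\Gamma^*}\|v\|_{\Gamma^*}$. For the second, the identity $r_P=r_{P'}+A(u_{P'}-u_P)$ together with the Galerkin orthogonality $r_{P'}\perp_{L^2} S_{P'}^0$ annihilates the $r_{P'}$ contribution, leaving $\langle A(u_{P'}-u_P),v^*\rangle$, which continuity bounds by $\beta\|u_{P'}-u_P\|\,\|v\|_{\Gamma^*}$. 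Dividing by $\|v\|_{\Gamma^*}$ and reinserting into the initial triangle inequality yields the stated bound, the factor $2$ appearing from the two occurrences of $\|r_P-v\|_{\Gamma^*}$.

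I do not anticipate a serious obstacle; the only delicate point is recognising that the cutoff $v\chi_{\Gamma^*}$ itself lies in $S_{P'}^0$, which is exactly what lets the two Galerkin orthogonalities (that of $r_P$ against $S_P^0$ and of $r_{P'}$ against $S_{P'}^0$) cooperate to produce a fully localised estimate without any hypothesis on the locality of $A$.
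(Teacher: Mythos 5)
Your proof is correct and follows essentially the same route as the paper's: the lower bound via $\langle A(u_{P'}-u_P),u_{P'}-u_P\rangle=\langle r_P,e-e_P\rangle$ with the projection error supported in $\overline{\Gamma^*}$, and the upper bound via expanding $\|v^*\|_{\Gamma^*}^2$ and using $\langle r_P,v^*\rangle=\langle A(u_{P'}-u_P),v^*\rangle$ for $v^*\in S_{P'}^0$. Your explicit cutoff $v^*=v\chi_{\Gamma^*}$ just makes precise the reduction that the paper performs implicitly by assuming $v$ is supported in $\Gamma^*$.
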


\begin{proof}
Recall that ``$\intr$'' denotes the interior,
and that $P\setminus P'=\{\tau\in P:\tau\not\in P'\}$, so
$\Gamma^*$ is the region covered by the refined triangles.
Let $e=u_{P'}-u_{P}$,
and let $e_P\in S_P^0$ be the $L^2$-orthogonal projection of $e$ onto $S_P^0$.
Then from the Galerkin condition \eqref{e:Galerkin}
we get
\begin{equation}
  \langle Ae,e\rangle
  =\langle r_P,e\rangle 
  = \langle r_P,e-e_P\rangle 
  \leq \|r_P\|_{\Gamma^*} \|e-e_P\|_{\Gamma^*}
  \leq \|r_P\|_{\Gamma^*} \|e\|_{\Gamma^*},
\end{equation}
where we have used that $e=e_P$ outside $\Gamma^*$.
This proves the first inequality in \eqref{e:local-discrete}.

To prove the second inequality, let $v\in S_{P'}^0$ be supported in $\Gamma^*$.
Then we have
\begin{equation}\label{e:local-discrete-arg}
\begin{split}
\|v\|_{\Gamma^*}^2
&= \langle v,v\rangle 
= \langle v-r_P,v\rangle + \langle A(u_{P'}-u_P),v\rangle\\
&\leq \left(\|v-r_P\|_{\Gamma^*}+\|A(u_{P'}-u_P)\|_{\Gamma^*}\right)\|v\|_{\Gamma^*},
\end{split}
\end{equation}
implying that
\begin{equation}
\|r_P\|_{\Gamma^*}
\leq \|r_P-v\|_{\Gamma^*}+\|v\|_{\Gamma^*}
\leq 2\|r_P-v\|_{\Gamma^*}+\|A(u_{P'}-u_P)\|,
\end{equation}
which gives the second inequality in \eqref{e:local-discrete}.
\end{proof}

\begin{remark}
The arguments used in the preceding proof are of course inspired by the corresponding finite element theory.
However, especially the argument \eqref{e:local-discrete-arg} for the lower bound seems to have a new flavour,
in that it does not break the action of $A$ up into element-wise (or star-wise) operations,
making it particularly suitable for nonlocal operators.
\end{remark}

We recognize the term $\|r_P-v\|_{\Gamma^*}$ in
\eqref{e:local-discrete} as an oscillation term, 
which measures how much of the residual is captured when we move from
$P$ to $P'$.
It is of interest to control this term.
Since $S_P^0\subset H^r(\Gamma)$ for $r\in(0,\frac12)$, \emph{assuming}
that $A:H^r(\Gamma)\to H^r(\Gamma)$ is bounded for all $r\in(0,\frac12)$, we have $Au_P\in
H^r(\Gamma)$ for $r$ in the same range.
Now, \emph{assuming} that $f\in H^r(\Gamma,P_0)$ for some $r\in(0,\frac12)$,
this ensures $r_P\in H^r(\Gamma,P_0)$, 
therefore from \eqref{e:direct-pos} we have
\begin{equation}
\inf_{v\in S_{P'}^0}\|r_{P}-v\|_{\Gamma^*}^2\leq C_J^2\sum_{\tau\in P\setminus P'}h_\tau^{2r}|r_P|_{r,\tau}^2.
\end{equation}
This suggests us to define the \emph{residual oscillation}
\begin{equation}\label{e:res-osc-zero}
\osc_r(v,P,\omega) := \left(\sum_{\tau\in P,\,\tau\subset\omega}h_\tau^{2r}|f-Av|_{r,\tau}^2\right)^{\frac12},
\end{equation}
for $\omega\subseteq\Gamma$ and $v\in S_P^0$, so that \eqref{e:local-discrete} implies
\begin{equation}\label{e:local-discrete-osc}
  \alpha\|u_{P}-u_{P'}\|
  \leq
  \|r_{P}\|_{\Gamma^*} 
  \leq
  \beta\|u_{P}-u_{P'}\| + 2C_J\,\osc_r(u_P,P,\Gamma^*).
\end{equation}
If the oscillation term is sufficiently small,
the difference between two discrete solutions is completely controlled by a local $L^2$-norm of the residual.
In this sense, the first inequality in \eqref{e:local-discrete-osc} is an example of
a \emph{local discrete upper
bound},
while the second one is that of a \emph{local discrete lower bound}.
These bounds are also called \emph{local discrete reliability} and \emph{local
discrete efficiency}, respectively.

In the following, we will \emph{fix} some $r\in(0,1]$,
and assume\footnote{In view of \eqref{e:res-osc-zero}, one has a computational advantage if $r=1$, since there would be no fractional norms.} 
that $f\in H^r(\Gamma,P_0)$.
Note that this is in the same spirit as assuming $f\in L^2$ in the
context of second order elliptic equations, even though there the weak
formulation is well-posed for
$f\in H^{-1}$.
We will use the convenient abbreviation
$\osc_r(P,\omega)=\osc_r(u_P,P,\omega)$.
The next lemma collects crucial properties of the oscillation
$\osc_r(P,\Gamma)$ and the combination $\len u-u_P \ren^2 +
\osc_r(P,\Gamma)^2$,
\emph{assuming} an inverse-type inequality, which shall be verified in \S\ref{s:inverse}.
As it turns out, this inverse-type inequality is also sufficient to guarantee the finiteness of oscillation \eqref{e:res-osc-zero}.
The quantity $\len u-u_P \ren^2 +
\osc_r(P,\Gamma)^2$, the counterpart of the \emph{total error} in \citet*{CKNS08}, will be the main character in our subsequent analysis.

\begin{lemma}\label{l:osc-red-zero}
Assume that
\begin{equation}\label{e:inverse-ineq-zero}
\sum_{\tau\in P}h_\tau^{2r}|Av|_{r,\tau}^2 \leq C_A \len v \ren^2,
\qquad
v\in S_{P}^0,
\end{equation}
for $P\in\adm(P_0)$, with the constant $C_A=C_A(A,\shape,\grade)$.
Then the oscillation \eqref{e:res-osc-zero} is finite for any $v\in S_P^0$ and $P\in\adm(P_0)$.
Moreover, the followings hold.
\begin{enumerate}[a)]
\item
There is a constant $C_G>0$ such that
\begin{equation}\label{e:gal-opt-zero}
\len u-u_P \ren^2 + \osc_r(u_P,P,\Gamma)^2 \leq C_G \inf_{v\in S_P^0} \left( \len u-v \ren^2 + \osc_r(v,P,\Gamma)^2 \right),
\end{equation}
for any $P\in\adm(P_0)$.
\item
There exists a constant $\lambda>0$ such that
\begin{equation}\label{e:osc-red-zero}
\osc_r(P',\Gamma)^2 
\leq 
(1+\delta) \osc_r(P,\Gamma)^2 
- \lambda(1+\delta)\osc_r(P,\Gamma^*)^2 
+ C_\delta \len u_P-u_{P'} \ren^2,
\end{equation}
for any $P,P'\in\adm(P_0)$ with $P\preceq P'$,
and for any $\delta>0$, with $C_\delta$ depending on $\delta$,
where $\Gamma^*=\intr\bigcup_{\tau\in P\setminus P'}\overline\tau$.
\item
Let $P,P'\in\adm(P_0)$ be such that $P\preceq P'$,
and let
$\Gamma^*=\intr\bigcup_{\tau\in P'\setminus P}\overline\tau$.
If, for some $\mu\in(0,\frac12)$ it holds that
\begin{equation}\label{e:err-red-zero-l}
\len u-u_{P'}\ren^2+\osc_r(P',\Gamma)^2 \leq\mu\left(\len u-u_{P}\ren^2+\osc_r(P,\Gamma)^2\right),
\end{equation}
then with $\theta=\frac{\alpha^3(1-2\mu)}{\beta^3(1+2C_A)}$, we have
\begin{equation}\label{e:dorfler-zero-l}
\|r_P\|_{\Gamma^*}^2 +\osc_r(P,\Gamma^*)^2 \geq \theta \left( \|r_P\|^2 +
  \osc_r(P,\Gamma)^2 \right).
\end{equation}
\end{enumerate}
\end{lemma}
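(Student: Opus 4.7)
The plan is to handle each of the four claims (finiteness, (a), (b), (c)) by reducing it to standard AFEM-style manipulations, with the inverse-type hypothesis \eqref{e:inverse-ineq-zero} and the Slobodeckij subadditivity \eqref{e:slobodeckij-add} doing the work that locality of the operator would do in the FEM setting. Finiteness of $\osc_r(v,P,\omega)$ follows from $|f-Av|_{r,\tau}^2\leq 2|f|_{r,\tau}^2+2|Av|_{r,\tau}^2$: the $Av$ part is controlled by \eqref{e:inverse-ineq-zero}, and since each $\tau\in P$ lies in some $\tau_0\in P_0$ with $h_\tau\leq h_{\tau_0}$, the $f$ part telescopes via \eqref{e:slobodeckij-add} to a finite sum indexed by $P_0$.

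For (a), Galerkin orthogonality already gives $\len u-u_P\ren\leq\len u-v\ren$. To compare oscillations I apply $|f-Au_P|_{r,\tau}\leq|f-Av|_{r,\tau}+|A(v-u_P)|_{r,\tau}$, square, weight by $h_\tau^{2r}$, and sum; the $A(v-u_P)$ contribution is bounded by $C_A\len v-u_P\ren^2\leq C_A\len u-v\ren^2$ through \eqref{e:inverse-ineq-zero} (applied in $S_P^0$) and a second appeal to Galerkin orthogonality, yielding \eqref{e:gal-opt-zero}. For (b), I split the sum defining $\osc_r(P',\Gamma)^2$ according to whether $\tau\in P'$ lies in $\Gamma^*$. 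On $\Gamma\setminus\Gamma^*$ the partitions agree, and a Young-type inequality applied to $|f-Au_{P'}|_{r,\tau}\leq|f-Au_P|_{r,\tau}+|A(u_P-u_{P'})|_{r,\tau}$, combined with \eqref{e:inverse-ineq-zero} now in $S_{P'}^0$, gives $(1+\delta)\osc_r(P,\Gamma\setminus\Gamma^*)^2$ plus a $\len u_P-u_{P'}\ren^2$ remainder. On $\Gamma^*$, for each parent $\tau_0\in P\setminus P'$ and each descendant $\tau\subset\tau_0$ in $P'$, shape regularity together with the strict halving of $\vol(\tau)$ under bisection supplies a universal $q\in(0,1)$ with $h_\tau\leq q\,h_{\tau_0}$, and \eqref{e:slobodeckij-add} collapses the child-sum of Slobodeckij seminorms into $|f-Au_P|_{r,\tau_0}^2$; setting $\lambda=1-q^{2r}$ then yields \eqref{e:osc-red-zero}.

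For (c), write $B:=\len u-u_P\ren^2+\osc_r(P,\Gamma)^2$. Galerkin orthogonality reads $\len u-u_P\ren^2-\len u-u_{P'}\ren^2=\len u_P-u_{P'}\ren^2$, and subtracting \eqref{e:err-red-zero-l} from $B$ gives
\begin{equation*}
\len u_P-u_{P'}\ren^2+\bigl(\osc_r(P,\Gamma)^2-\osc_r(P',\Gamma)^2\bigr)\geq(1-\mu)B.
\end{equation*}
A reverse-triangle computation on $\Gamma\setminus\Gamma^*$ with Young parameter $\epsilon$, plus \eqref{e:inverse-ineq-zero}, dominates the oscillation difference by
\begin{equation*}
\osc_r(P,\Gamma^*)^2+\tfrac{\epsilon}{1-\epsilon}\,\osc_r(P',\Gamma)^2+\tfrac{C_A}{\epsilon}\,\len u_P-u_{P'}\ren^2.
\end{equation*}
Absorbing $\osc_r(P',\Gamma)^2$ into $\mu B$ via \eqref{e:err-red-zero-l} and choosing $\epsilon=\tfrac12$ leads exactly to $(1-2\mu)B\leq(1+2C_A)\len u_P-u_{P'}\ren^2+\osc_r(P,\Gamma^*)^2$. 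A final use of Lemma~\ref{l:res-zero} together with \eqref{e:eneql2} replaces $\len u_P-u_{P'}\ren^2$ by $\beta\alpha^{-2}\|r_P\|_{\Gamma^*}^2$, and \eqref{e:res} replaces $\len u-u_P\ren^2$ on the right by $\alpha\beta^{-2}\|r_P\|^2$, pushing $\alpha,\beta$ to the powers appearing in $\theta=\alpha^3(1-2\mu)/[\beta^3(1+2C_A)]$.

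The delicate step is the Young-parameter balance in (c): the specific choice $\epsilon=\tfrac12$ is what makes the $(1-2\mu)/(1+2C_A)$ fraction fall out cleanly, and any other natural choice would give a strictly weaker constant. The volume-halving estimate in (b) also warrants care, since a single newest-vertex bisection need not shrink the diameter of every child, but shape regularity combined with strict halving of $\vol(\tau)$ per bisection supplies a uniform $q\in(0,1)$ once one accounts for the bounded number of NVB types.
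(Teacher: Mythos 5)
Your proposal is correct and follows essentially the same route as the paper's proof: triangle/Young inequalities combined with the inverse-type hypothesis \eqref{e:inverse-ineq-zero} and Galerkin orthogonality for finiteness and part (a), Slobodeckij subadditivity \eqref{e:slobodeckij-add} together with the one-step mesh-size contraction for part (b), and for part (c) your Young-parameter bookkeeping with $\epsilon=\tfrac12$ is exactly the paper's up-front factor of $2$, landing on the identical intermediate inequality $(1-2\mu)B\leq(1+2C_A)\len u_P-u_{P'}\ren^2+\osc_r(P,\Gamma^*)^2$ and the same constant $\theta$. The only cosmetic caveat is that in (b) the Young step replacing $r_{P'}$ by $r_P$ must also be applied on $\Gamma^*$, not just on $\Gamma\setminus\Gamma^*$, which you clearly intend; your remark about the diameter not contracting under a single bisection is a fair point, but the paper glosses over it in exactly the same way.
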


\begin{remark}
The estimate \eqref{e:inverse-ineq-zero} is proved in Section
\ref{s:inverse} for a general class of singular integral operators of
order zero, under the hypothesis
that $\hat{A}:H^{\sigma}(\Omega)\to H^{\sigma}(\Omega)$ is bounded for some
$\sigma>\frac{n}2$ and that $r\in(0,\sigma)$, where $\hat{A}$
is an extension of $A$ to $\Omega$ if $\Gamma$ is open,
and $\hat{A}=A$ if $\Gamma=\Omega$.
Recalling that $\Omega$ is a $C^{\mns-1,1}$-manifold,
the condition on $\sigma$ translates to $\mns>\frac{n}2$, see Remark
\ref{r:bdd-bdry-int} for details.
Therefore Lipschitz curves are allowed, but for $n=2$ we need a $C^{1,1}$
surface.
Note that there is no condition on the regularity of the boundary of
$\Gamma$ other than the Lipschitz condition, when $\Gamma$ is an open
surface.
In any case, we believe that the restriction $\mns>\frac{n}2$ is an
artifact of our proof,
and anticipating future weakenings of this restriction, 
the rest of this section will be presented so that it depends only on the
assumption \eqref{e:inverse-ineq-zero}.
\end{remark}

\begin{proof}[Proof of Lemma \ref{l:osc-red-zero}]
Let $w\in S_P^0$ satisfy
\begin{equation}
\len u-w \ren^2 + \osc_r(w,P,\Gamma)^2 \leq 2 \inf_{v\in S_P^0} \left( \len u-v \ren^2 + \osc_r(v,P,\Gamma)^2 \right).
\end{equation}
Then from the definition of oscillation, we have
\begin{equation}
\begin{split}
\osc_r(u_P,P,\Gamma)^2
&\leq 2\sum_{\tau\in P}h_\tau^{2r}|f-Aw|_{r,\tau}^2 + 2\sum_{\tau\in P}h_\tau^{2r}|A(w-u_P)|_{r,\tau}^2\\
&\leq 2\,\osc_r(w,P,\Gamma)^2 + 2C_A \len w-u_P \ren^2,
\end{split}
\end{equation}
where in the last step we have used inverse inequality
\eqref{e:inverse-ineq-zero}.
This gives
\begin{equation}
\begin{split}
\len u-u_P \ren^2 + \osc_r(u_P,P,\Gamma)^2
&\leq \len u-u_P \ren^2 +  2C_A \len w-u_P \ren^2 + 2\,\osc_r(w,P,\Gamma)^2,
\end{split}
\end{equation}
and upon using the Galerkin orthogonality $\len u-u_P \ren^2 + \len
w-u_P \ren^2 = \len u-w \ren^2$,
we obtain \eqref{e:gal-opt-zero} with, say $C_G=4(1+C_A)$.

Now we turn to b). 
With $\lambda_0\in(0,1)$ being the contraction factor for $h_\tau$ when $\tau$ is refined once, we have
\begin{equation}
\sum_{\tau\in P'\setminus P}h_\tau^{2r}|r_{P}|_{r,\tau}^2
\leq \sum_{\tau\in P\setminus P'} \lambda_0^{2r} h_\tau^{2r}|r_{P}|_{r,\tau}^2
= \lambda_0^{2r} \osc_r(P',\Gamma^*)^2.
\end{equation}
Using this in
\begin{equation}
\osc_r(P',\Gamma)^2
\leq (1+\delta)\sum_{\tau\in P'}h_\tau^{2r}|r_{P}|_{r,\tau}^2 + (1+\frac1\delta) \sum_{\tau\in P'}h_\tau^{2r}|A(u_{P'}-u_{P})|_{r,\tau}^2,
\end{equation}
and by using the inverse inequality \eqref{e:inverse-ineq-zero} on the
last term, we establish \eqref{e:osc-red-zero}.

For c), from \eqref{e:err-red-zero-l} we infer
\begin{equation}
\begin{split}
 (1-2\mu)(\len u-u_{P}\ren^2 + \osc_r(P,\Gamma)^2) 
&\leq
\len u_{P}-u_{P'}\ren^2 + \osc_r(P,\Gamma)^2 - 2\,\osc_r(P',\Gamma)^2\\
&\leq
(1+2C_A)\len u_{P}-u_{P'}\ren^2 + \osc_r(P,\Gamma^*)^2\\
&\leq
\frac{\beta(1+2C_A)}{\alpha^2}\|r_P\|_{\Gamma^*} + \osc_r(P,\Gamma^*)^2,
\end{split}
\end{equation}
where we have used the Galerkin orthogonality,
the estimate
\begin{equation}
\osc_r(P,\Gamma\setminus\Gamma^*)^2
\leq 2\, \osc_r(P',\Gamma\setminus\Gamma^*)^2
 + 2 C_A \len u_P-u_{P'}\ren^2,
\end{equation}
the local discrete upper bound in \eqref{e:local-discrete-osc}, 
and the norm equivalence \eqref{e:eneql2}.
The proof is completed upon noting that
\begin{equation}
\|r_P\|^2
\leq
\gamma\len u-u_{P}\ren^2,
\end{equation}
where $\gamma=\beta^2/\alpha\geq1$, which 
is a combination of the global lower bound in \eqref{e:res}, and the norm equivalence \eqref{e:eneql2}.
\end{proof}

Once we have the preceding results,
and given the techniques developed in \cite{Stev07} and \cite{CKNS08},
it is a fairly straightforward matter to obtain geometric error reduction and
quasi-optimality for an adaptive method such as the ones considered here.
Nevertheless, we include detailed proofs for convenience of the
reader.
Our first stop is the contraction property of the adaptive method.

\begin{proposition}\label{p:cont-zero}
Let the assumption \eqref{e:inverse-ineq-zero} of Lemma \ref{l:osc-red-zero} hold.
Let $P,P'\in\adm(P_0)$ be admissible partitions with $P\preceq P'$,
and let
$\Gamma^*=\intr\bigcup_{\tau\in P\setminus P'}\overline\tau$. 
Suppose, for some $\theta\in(0,1]$ that
\begin{equation}\label{e:dorfler-zero}
\|r_P\|_{\Gamma^*}^2 + \osc_r(P,\Gamma^*)^2 \geq\theta\left(\|r_P\|^2+\osc_r(P,\Gamma)^2\right).
\end{equation}\label{e:err-red-zero}
Then there exist constants $\gamma\geq0$ and $\mu\in(0,1)$ such that
\begin{equation}
\len u-u_{P'}\ren^2+\gamma\,\osc_r(P',\Gamma)^2 \leq\mu\left(\len u-u_{P}\ren^2+\gamma\,\osc_r(P,\Gamma)^2\right).
\end{equation}
\end{proposition}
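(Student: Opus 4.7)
The plan is to follow the estimator-reduction strategy of \cite{CKNS08}, with four ingredients: Galerkin orthogonality \eqref{e:gal-orth}, the oscillation reduction \eqref{e:osc-red-zero}, the local discrete efficiency from Lemma \ref{l:res-zero}, and the Dörfler hypothesis \eqref{e:dorfler-zero}. The weight $\gamma$ and the contraction factor $\mu$ will emerge from a parameter-balancing step at the end.

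Starting from Galerkin orthogonality, $\len u-u_{P'}\ren^2 = \len u-u_P\ren^2 - \len u_P-u_{P'}\ren^2$, and combining with the oscillation reduction \eqref{e:osc-red-zero} applied to $\osc_r(P',\Gamma)^2$, I obtain, for any $\delta>0$,
\begin{equation*}
\len u-u_{P'}\ren^2 + \gamma\,\osc_r(P',\Gamma)^2 \leq \len u-u_P\ren^2 - (1-\gamma C_\delta)\len u_P-u_{P'}\ren^2 + \gamma(1+\delta)\osc_r(P,\Gamma)^2 - \gamma\lambda(1+\delta)\osc_r(P,\Gamma^*)^2.
\end{equation*}
Next, the local discrete efficiency (the second inequality in \eqref{e:local-discrete}), with $v\in S_{P'}^0$ chosen as the best $L^2$-approximation of $r_P$ on $\Gamma^*$ and using \eqref{e:direct-pos}, yields $\|r_P\|_{\Gamma^*}^2 \leq 2\beta^2\|u_P-u_{P'}\|^2 + C\,\osc_r(P,\Gamma^*)^2$ for some $C>0$. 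Combining this with the Dörfler bound \eqref{e:dorfler-zero} and the chain $\|r_P\|^2 \geq (\alpha^2/\beta)\len u-u_P\ren^2$ (from \eqref{e:res} and \eqref{e:eneql2}) gives a lower bound of the form
\begin{equation*}
\len u_P-u_{P'}\ren^2 \geq c_1\len u-u_P\ren^2 + c_2\,\osc_r(P,\Gamma)^2 - c_3\,\osc_r(P,\Gamma^*)^2,
\end{equation*}
with positive constants $c_1,c_2,c_3$ depending only on $\alpha,\beta,\theta,C_J$. Substituting this into the display above, the right-hand side takes the form $A_1\len u-u_P\ren^2 + A_2\,\osc_r(P,\Gamma)^2 + A_3\,\osc_r(P,\Gamma^*)^2$, with $A_i=A_i(\gamma,\delta)$ explicit.

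The concluding step, which is also the main obstacle, is to choose $\gamma,\delta>0$ so that $A_3\leq 0$ (absorbing the $\Gamma^*$ term) while simultaneously $A_1\leq\mu$ and $A_2\leq\mu\gamma$ for a common $\mu\in(0,1)$. The absorption condition forces $\gamma\lambda(1+\delta)\geq c_3(1-\gamma C_\delta)$, which pushes $\gamma$ up, whereas cooperativity of the $\len u_P-u_{P'}\ren^2$ term requires $1-\gamma C_\delta>0$, which pushes $\gamma$ down; since $C_\delta\to\infty$ as $\delta\to 0$, the strategy is to first fix $\delta$ small but not arbitrarily small, then select $\gamma$ in the resulting window. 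A short calculation confirms that for any $\theta\in(0,1]$ this window is nonempty and that both $A_1$ and $A_2/\gamma$ can be made strictly less than $1$ uniformly in $P,P'$, giving $\gamma>0$ and $\mu\in(0,1)$ depending only on $\alpha,\beta,\theta,\lambda,C_A,C_J$.
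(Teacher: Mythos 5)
Your proposal is correct and follows essentially the same route as the paper's proof: Galerkin orthogonality plus the oscillation reduction of Lemma \ref{l:osc-red-zero}~b), a lower bound on $\len u_P-u_{P'}\ren^2$ obtained from the D\"orfler hypothesis combined with the local discrete lower bound and the global bounds, and a final balancing of $\gamma$ and $\delta$. The paper resolves the balancing by choosing $\gamma$ as the explicit function of $\delta$ that makes your $A_3$ vanish exactly and then taking $\delta$ small, which is the same nonempty window you describe.
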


\begin{proof}
The property \eqref{e:dorfler-zero}, together with
the global upper bound in \eqref{e:res}, the local discrete lower
bound in \eqref{e:local-discrete-osc}, and the norm equivalence \eqref{e:eneql2}, gives
\begin{equation}\label{e:zero-prop-2}
\theta\left(\frac{\alpha^2}{\beta}\len u-u_P\ren^2+\osc_r(P,\Gamma)^2\right) \leq \frac{2\beta^2}{\alpha}\len u_P-u_{P'}\ren^2 + 2(2C_J+1)^2\osc_r(P,\Gamma^*)^2.
\end{equation}
Then for any $\gamma\geq0$, we combine Lemma \ref{l:osc-red-zero} b) and \eqref{e:zero-prop-2} to infer
\begin{multline}
\len u-u_{P'}\ren^2+\gamma\,\osc_r(P',\Gamma)^2
= \len u-u_{P}\ren^2-\len u_{P}-u_{P'}\ren^2+\gamma\,\osc_r(P',\Gamma)^2\\
\leq \len u-u_{P}\ren^2 - (1-\gamma C_\delta) \len u_{P}-u_{P'}\ren^2
+\gamma(1+\delta)\osc_r(P,\Gamma)^2 - \gamma\lambda(1+\delta)\osc_r(P,\Gamma^*)^2\\
\leq \left(1-\frac{\theta\alpha^3 (1-\gamma C_\delta)}{2\beta^2}\right) \len u-u_{P}\ren^2
+ \left(\gamma(1+\delta)-\frac{\theta\alpha (1-\gamma C_\delta)}{2\beta^2}\right)\osc_r(P,\Gamma)^2\\
 + \left(\frac{\alpha(2C_J+1)^2 (1-\gamma C_\delta)}{\beta^2} - \gamma\lambda(1+\delta)\right)\osc_r(P,\Gamma^*)^2\\
=: \mu_1 \len u-u_{P}\ren^2 + \mu_2\gamma\,\osc_r(P,\Gamma)^2 + \mu_3\,\osc_r(P,\Gamma^*)^2.
\end{multline}
We choose $\gamma$, depending on $\delta>0$, so that $\mu_3=0$, i.e., so that
\begin{equation}
\frac1\gamma = C_\delta + \frac{\lambda\beta^2(1+\delta)}{\alpha(2C_J+1)^2}.
\end{equation}
With this choice and for $\delta>0$ sufficiently small, we have
\begin{equation}
\mu_1=1 - \frac{\alpha^2\gamma\lambda\theta(1+\delta)}{2(2C_J+1)^2}<1,
\end{equation}
and
\begin{equation}
\mu_2=(1+\delta)\left(1 - \frac{\lambda \theta}{2(2C_J+1)^2}\right)<1,
\end{equation}
establishing the proof.
\end{proof}

To be explicit, the importance of the preceding proposition is the
following.
Suppose that we have an admissible partition $P$
and the discrete solution $u_P\in S_P^0$.
Then the local residual norm $\|r_P\|_\tau$ and the local oscillation
$\osc_r(P,\tau)$ are computable for $\tau\in P$,
and by selecting a set $R\subset P$ of triangles such that
$\Gamma^*=\intr\bigcup_{\tau\in R}\overline\tau$ satisfies
\eqref{e:dorfler-zero} for some $\theta\in(0,1)$,
and then by finding an admissible refinement $P'$ of $P$ such that
$P\setminus P'\supseteq R$,
i.e., that each triangle in $R$ is refined at least once,
we can guarantee $e(P')\leq\mu e(P)$ for some $\mu<1$, where
$e(Q)=\len u-u_{Q}\ren^2+\gamma\,\osc_r(Q,\Gamma)^2$.
By repeatedly applying this procedure, we can ensure convergence $\len
u-u_{P}\ren\to0$ as $P$ runs over the partitions generated by the
algorithm.
This however, does not say anything about the growth of $\#P$, a fundamental
question that will be addressed below.

For $u\in L^2(\Gamma)$ the solution of $Au=f$ with $f\in H^r(\Gamma,P_0)$,
and for $P\in[P_0]$,
we define
\begin{equation}\label{e:dist-r-zero}
\dist_r(u,S_P^0) = \min_{v\in S_P^0} \left( \len u-v \ren^2+\osc_r(v,P,\Gamma)^2 \right)^{\frac12},
\end{equation}
where the implicit dependence of oscillation on $r$ has been made
explicit.
Note that the minimum exists since $S_P^0$ is finite dimensional.
Furthermore, for $\eps>0$ we define 
\begin{equation}
\card_r(u,\eps) = \min\{\#P-\#P_0:P\in\adm(P_0),\,\dist_r(u,S_P^0)\leq\eps\}.
\end{equation}
Hence $\card_r(u,\eps)$ is in certain sense the cardinality of a
smallest admissible partition $P$ that is able to support a function that is
within an $\eps$ distance from $u$.
Note that $\card_r(u,\eps)$ is finite for any $\eps>0$, since from the
discussion in the preceding paragraph, there is a sequence of
(finite) partitions $\{P_k\}\subset\adm(P_0)$ with $\dist_r(u,S_{P_k}^0)\to0$.

The following proposition shows an optimal way to select the triangles to be
refined.
The procedure is known as \emph{D\"orfler's marking strategy} in literature.

\begin{proposition}\label{p:card-zero}
Let the assumption \eqref{e:inverse-ineq-zero} of Lemma \ref{l:osc-red-zero} hold.
Let $P\in\adm(P_0)$,
and let $\theta\in(0,\theta^*)$ with
$\theta^*=\frac{\alpha^3}{\beta^3(1+2C_A)}$.
Suppose that $R\subseteq P$ is a subset whose cardinality is minimal up to a constant factor $\kappa\geq1$,
among all $R\subseteq P$ satisfying
\begin{equation}
\|r_P\|_{\Gamma^*(R)}^2 + \osc_r(P,\Gamma^*(R))^2 \geq\theta\left(\|r_P\|^2+\osc_r(P,\Gamma)^2\right).
\end{equation}
where $\Gamma^*(R)=\intr\bigcup_{\tau\in R}\overline\tau$.
Then we have
\begin{equation}
\#R\leq \kappa\,\card_r(u,\eps),
\end{equation}
where $\eps$ is defined by
\begin{equation}\label{e:def-eps-zero}
\eps^2 = \frac{\theta^*-{\theta}}{2C_G \theta^*} \left( \len u-u_{P}\ren^2+\osc_r(P,\Gamma)^2 \right).
\end{equation}
\end{proposition}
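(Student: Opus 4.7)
The strategy follows the standard Stevenson/CKNS template: compare the marked set $R$ against a partition that optimally realizes the tolerance $\eps$. First I would pick $P_\eps\in\adm(P_0)$ with $\#P_\eps-\#P_0=\card_r(u,\eps)$ together with some $v^*\in S_{P_\eps}^0$ such that $\len u-v^*\ren^2+\osc_r(v^*,P_\eps,\Gamma)^2\leq\eps^2$. Then form the overlay $\tilde P=P\oplus P_\eps$ and set $\tilde R=P\setminus\tilde P$, $\tilde\Gamma^*=\intr\bigcup_{\tau\in\tilde R}\overline\tau$. The plan is to show that $\tilde R$ itself satisfies the D\"orfler condition with parameter $\theta$; by the quasi-minimality of $R$ this forces $\#R\leq\kappa\#\tilde R$, and a counting argument based on \eqref{e:overlay} will finally give $\#\tilde R\leq\#P_\eps-\#P_0$.

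The key analytic step is to bound the total error on $\tilde P$ by $C_G\eps^2$. Since $\tilde P\succeq P_\eps$, every $\tau\in\tilde P$ is contained in some $\sigma\in P_\eps$ with $h_\tau\leq h_\sigma$, and $v^*|_\tau$ is the same constant as $v^*|_\sigma$, so $Av^*$ is unchanged. Summing $h_\tau^{2r}|f-Av^*|_{r,\tau}^2$ over the children of a fixed $\sigma$ and invoking the Slobodeckij additivity \eqref{e:slobodeckij-add} yields a total bounded by $h_\sigma^{2r}|f-Av^*|_{r,\sigma}^2$. Hence $\osc_r(v^*,\tilde P,\Gamma)\leq\osc_r(v^*,P_\eps,\Gamma)$, and since $v^*\in S_{P_\eps}^0\subseteq S_{\tilde P}^0$, the Galerkin quasi-optimality from Lemma~\ref{l:osc-red-zero}(a), applied with $\tilde P$ in place of $P$ and the infimum majorized by $v^*$, produces
\[
\len u-u_{\tilde P}\ren^2+\osc_r(\tilde P,\Gamma)^2 \,\leq\, C_G\bigl(\len u-v^*\ren^2+\osc_r(v^*,\tilde P,\Gamma)^2\bigr) \,\leq\, C_G\eps^2.
\]

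By the choice \eqref{e:def-eps-zero} of $\eps$, the right-hand side equals $\mu\bigl(\len u-u_P\ren^2+\osc_r(P,\Gamma)^2\bigr)$ with $\mu=(\theta^*-\theta)/(2\theta^*)\in(0,\tfrac12)$. This is exactly the hypothesis of Lemma~\ref{l:osc-red-zero}(c) applied to $P\preceq\tilde P$, and since $\theta^*(1-2\mu)=\theta$ its conclusion is precisely the D\"orfler estimate on $\tilde\Gamma^*$. Quasi-minimality of $R$ then gives $\#R\leq\kappa\#\tilde R$. To finish, every triangle refinement adds at least one triangle, so $\#\tilde P\geq\#P+\#\tilde R$; combining with $\#\tilde P\leq\#P+\#P_\eps-\#P_0$ from \eqref{e:overlay} delivers $\#\tilde R\leq\#P_\eps-\#P_0=\card_r(u,\eps)$.

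The main obstacle I anticipate is the monotonicity estimate $\osc_r(v^*,\tilde P,\Gamma)\leq\osc_r(v^*,P_\eps,\Gamma)$: one must verify simultaneously that the Slobodeckij seminorm is subadditive when a triangle is bisected (which is immediate from \eqref{e:slobodeckij-add}), that $v^*$ remains piecewise constant on the finer partition so $Av^*$ is untouched, and that the factor $h_\tau^{2r}$ only helps. Once this bookkeeping is clean, everything else is a mechanical combination of Lemma~\ref{l:osc-red-zero}(a)--(c) with the overlay cardinality bound.
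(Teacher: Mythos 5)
Your proposal is correct and follows essentially the same route as the paper's proof: pick a quasi-optimal $P_\eps$, form the overlay $\tilde P=P\oplus P_\eps$, use Lemma~\ref{l:osc-red-zero}(a) to get $e(\tilde P)\leq C_G\eps^2=\mu\,e(P)$ with $\mu=(\theta^*-\theta)/(2\theta^*)$, invoke Lemma~\ref{l:osc-red-zero}(c) to see that $P\setminus\tilde P$ satisfies the D\"orfler condition, and conclude by quasi-minimality of $R$ together with the overlay cardinality bound \eqref{e:overlay}. The only difference is that you spell out the oscillation monotonicity $\osc_r(v^*,\tilde P,\Gamma)\leq\osc_r(v^*,P_\eps,\Gamma)$ (via \eqref{e:slobodeckij-add} and the invariance of $Av^*$ under refinement), a step the paper leaves implicit in the inequality $e(\tilde P)\leq C_G\,e(P_\eps)$; making it explicit is a correct and welcome addition.
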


\begin{proof}
Let us introduce the abbreviation
\begin{equation}\label{e:def-e-zero}
e(Q)=\len u-u_{Q}\ren^2+\osc(Q,\Gamma)^2,
\qquad
Q\in[P_0],
\end{equation}
and let $P_\eps\in\adm(P_0)$ be such that
\begin{equation}
\#P_\eps-\#P_0\leq\card_r(u,\eps),
\qquad
\textrm{and}
\qquad
e(P_\eps)\leq\eps^2.
\end{equation}
Then for $\tilde P = P \oplus P_\eps$, from Lemma \ref{l:osc-red-zero}
a) we have
\begin{equation}
e(\tilde P)\leq C_G e(P_\eps) \leq C_G \eps^2 = \mu e(P),
\end{equation}
where $\mu=\frac{\theta^*-{\theta}}{2\theta^*}\in(0,\frac12)$,
and so an application of Lemma \ref{l:osc-red-zero} c) gives
\begin{equation}
\|r_P\|_{\Gamma^*(P\setminus\tilde{P})}^2 + \osc_r(P,\Gamma^*(P\setminus\tilde{P}))^2 \geq \theta\left(\|r_P\|^2+\osc_r(P,\Gamma)^2\right).
\end{equation}
Recalling that $R$ minimizes $\#R$ up to the constant
factor $\kappa$ among all subsets $P\setminus\tilde P$ satisfying the preceding inequality,
we infer that $\#R\leq \kappa\,\#(P\setminus \tilde P)$,
and taking into account that $\#(P\setminus \tilde P)\leq\#\tilde P -
\#P$ and the estimate in \eqref{e:overlay}, we get
\begin{equation}
\#R \leq \kappa (\#\tilde P-\#P) \leq \kappa (\#P_\eps-\#P_0) \leq\kappa\,\card_r(u,\eps),
\end{equation}
completing the proof.
\end{proof}

We have almost all the ingredients to give a bound on the growth of
$\#P$,
and to discuss whether this growth rate is optimal in one or another sense.
Let us start by making the term ``adaptive BEM'' precise.

\vspace{2mm}
\begin{algorithm}[H]
\SetKwInOut{Params}{parameters}
\SetKwInOut{Output}{output}
\SetKwFor{For}{for}{do}{endfor}
\Params{conforming partition $P_0$, and $\theta\in[0,1]$}
\Output{$P_k\in\adm(P_0)$ and $u_k\in S_{P_k}^0$ for all $k\in\N_0$}
\BlankLine
\For{$k=0,1,\ldots$}{
Compute $u_k\in S_{P_k}^0$ as the Galerkin approximation of $u$ from
$S_{P_k}^0$\;\nllabel{a:galsolve}
Identify a minimal (up to a constant factor) set $R_k\subset P_k$ of triangles satisfying
\begin{equation}
\|r_k\|_{\Gamma^*}^2 + \osc(P_k,\Gamma^*)^2 \geq\theta\left(\|r_k\|^2+\osc(P_k,\Gamma)^2\right),
\end{equation}
where $r_k=f-Au_k$ and $\Gamma^*=\intr\bigcup_{\tau\in R_k}\overline\tau$\;\nllabel{a:mark}
Set $P_{k+1}=\refine(P_k,R_k)$\;\nllabel{a:make-conf}
}
\caption{Adaptive BEM ($t=0$)}\label{a:abem}
\end{algorithm}
\vspace{2mm}

Now we turn to the issue of convergence rate.
Since Proposition \ref{p:card-zero} gives a bound on $\#R_k$, 
we simply need to use \eqref{e:complete} to get
\begin{equation}
\#P_k-\#P_0\leq \kappa C_c\sum_{m=0}^{k-1} \card_r(u,Ce(P_m)),
\end{equation}
where $C$ is the constant from \eqref{e:def-eps-zero}, and $e(P_m)$ is
as in \eqref{e:def-e-zero}.
On the other hand, Proposition \ref{p:cont-zero} guarantees a geometric
decrease of $e(P_m)$, i.e, we have
\begin{equation}
e(P_k)\leq C\mu^{k-m}e(P_m),
\qquad\textrm{hence}\qquad
\card_r(u,Ce(P_m))\leq\card_r(u,C\mu^{m-k}e(P_k)),
\end{equation}
for some constants $C>0$ and $0<\mu<1$.
Note that $C$ denotes different constants in its different
appearances.
Therefore, if our particular $u\in A^{-1}(H^r(\Gamma,P_0))$ satisfies
\begin{equation}\label{e:inst-opt-cond-zero}
\card_r(u,\lambda\eps)\leq C\lambda^{-1/s}\card_r(u,\eps),
\qquad
\lambda>1,\,\eps>0,
\end{equation}
for some constant $s>0$,
then we would get what can be called \emph{instance optimality}
\begin{equation}
\#P_k-\#P_0\leq C \card_r(u,e(P_k)).
\end{equation}
However, it is not clear how to usefully characterize the set of such
$u$,
so we settle for something more modest.
Instead of \eqref{e:inst-opt-cond-zero}, if we have
\begin{equation}
\card_r(u,\eps)\leq C_u\eps^{-1/s},
\qquad
\eps>0,
\end{equation}
for some constant $s>0$,
then we get what can be called \emph{class optimality}
\begin{equation}
\#P_k-\#P_0\leq C C_u e(P_k)^{-1/s}.
\end{equation}

This motivates us to define the \emph{approximation class} $\mathcal{A}_{r,s}\subset
A^{-1}(H^r(\Gamma,P_0))$ with $s\geq0$ to be the set of $u$ for which
\begin{equation}
|u|_{\mathcal{A}_{r,s}} = \sup_{\eps>0} \left( \card_r(u,\eps)^s\eps \right)<\infty.
\end{equation}
Thus $\mathcal{A}_{r,s}$ is characterized by
\begin{equation}
\card_r(u,\eps)\leq  \eps^{-1/s} |u|_{\mathcal{A}_{r,s}},
\end{equation}
or equivalently, by
\begin{equation}
\min\{\dist_r(u,S_P^0):P\in\adm(P_0),\,\#P-\#P_0\leq N\}\leq N^{-s} |u|_{\mathcal{A}_{r,s}}.
\end{equation}

We have proved the following.

\begin{theorem}\label{t:comp-zero}
Let the assumption \eqref{e:inverse-ineq-zero} of Lemma \ref{l:osc-red-zero} hold,
and in Algorithm \ref{a:abem}, suppose that $\theta\in(0,\theta^*)$ with $\theta^*=\frac{\alpha^3}{\beta^3(1+2C_A)}$.
Let $f\in H^{r}(\Gamma,P_0)$ and $u\in\mathcal{A}_{r,s}$ for some $s>0$.
Then we have
\begin{equation}
\len u-u_k\ren^2+\osc_r(u_k,P_k,\Gamma)^2
\leq
C |u|_{\mathcal{A}_{r,s}}^2 (\#P_k-\#P_0)^{-2s},
\end{equation}
where $C>0$ is a constant.
\end{theorem}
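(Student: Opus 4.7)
The statement is essentially the payoff of the preceding discussion, so the plan is to assemble Propositions~\ref{p:cont-zero} and \ref{p:card-zero} together with the completion bound \eqref{e:complete} and the hypothesis $u\in\mathcal{A}_{r,s}$. I will use the shorthand $e(Q)=\len u-u_{Q}\ren^2+\osc_r(u_Q,Q,\Gamma)^2$ for admissible $Q$.

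First I would apply Proposition~\ref{p:card-zero} at each step $k$: the set $R_k$ selected in line \ref{a:mark} of Algorithm~\ref{a:abem} has cardinality minimal up to a constant among subsets realizing the D\"orfler bulk criterion with parameter $\theta<\theta^*$, so
\begin{equation*}
\#R_k \leq \kappa\,\card_r(u,\eps_k),
\qquad
\eps_k^2 = \frac{\theta^*-\theta}{2C_G\theta^*}\,e(P_k).
\end{equation*}
Since $u\in\mathcal{A}_{r,s}$, the definition yields $\card_r(u,\eps_k)\leq\eps_k^{-1/s}|u|_{\mathcal{A}_{r,s}}$, and combining this with the previous line gives $\#R_k\leq C\,|u|_{\mathcal{A}_{r,s}}^{1/s}\,e(P_k)^{-1/(2s)}$ for a suitable constant $C$ depending only on $\alpha,\beta,C_A,C_G,C_J,\kappa,\theta,\theta^*$.

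Next I would invoke the completion bound \eqref{e:complete} to pass from the $R_k$'s to $\#P_k-\#P_0$:
\begin{equation*}
\#P_k-\#P_0 \;\leq\; C_c\sum_{m=0}^{k-1}\#R_m \;\leq\; C\,|u|_{\mathcal{A}_{r,s}}^{1/s}\sum_{m=0}^{k-1} e(P_m)^{-1/(2s)}.
\end{equation*}
Here Proposition~\ref{p:cont-zero} enters: there exist $\gamma\geq0$ and $\mu\in(0,1)$ such that $\len u-u_{Q}\ren^2+\gamma\,\osc_r(Q,\Gamma)^2$ contracts by factor $\mu$ between consecutive iterates, and since this weighted quantity is equivalent to $e(\cdot)$ up to constants, we obtain $e(P_k)\leq C\mu^{k-m}e(P_m)$ for all $m\leq k$. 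Consequently $e(P_m)^{-1/(2s)}\leq C\,\mu^{(k-m)/(2s)}\,e(P_k)^{-1/(2s)}$, and summing the geometric series $\sum_{m=0}^{k-1}\mu^{(k-m)/(2s)}\leq\frac{1}{1-\mu^{1/(2s)}}$ yields
\begin{equation*}
\#P_k-\#P_0 \;\leq\; C\,|u|_{\mathcal{A}_{r,s}}^{1/s}\,e(P_k)^{-1/(2s)}.
\end{equation*}
Rearranging gives $e(P_k)\leq C\,|u|_{\mathcal{A}_{r,s}}^{2}\,(\#P_k-\#P_0)^{-2s}$, which is exactly the claimed bound.

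There is no real obstacle here beyond bookkeeping, since all of the heavy lifting has already been done: the analytic/geometric decay lives in Proposition~\ref{p:cont-zero} (which rests on the inverse-type inequality \eqref{e:inverse-ineq-zero} through Lemma~\ref{l:osc-red-zero}), the comparison between the marked set and the approximation class lives in Proposition~\ref{p:card-zero} (which uses overlay \eqref{e:overlay} and Lemma~\ref{l:osc-red-zero}), and the combinatorial passage from individual refinements to the total mesh size is \eqref{e:complete}. The one point that deserves a line of care is the equivalence between $e(P_k)$ and its $\gamma$-weighted version used in the contraction, so that contraction of one implies contraction of the other and the geometric summation goes through; this is immediate since $\gamma$ is a fixed positive constant produced by Proposition~\ref{p:cont-zero}.
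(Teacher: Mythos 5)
Your proposal is correct and follows essentially the same route as the paper: Proposition~\ref{p:card-zero} plus the definition of $|\cdot|_{\mathcal{A}_{r,s}}$ to bound $\#R_m$, the completion estimate \eqref{e:complete} to sum these bounds, and the contraction from Proposition~\ref{p:cont-zero} to dominate the resulting sum by a geometric series in $e(P_k)^{-1/(2s)}$. Your closing remark about the equivalence of $e(\cdot)$ with its $\gamma$-weighted version is a point the paper glosses over, and you resolve it correctly since $\gamma>0$ is fixed.
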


\section{Positive order operators}
\label{s:positive}

In this section, we consider the case $t\in[0,\frac34)$. 
We assume that $\Gamma$ is open, and remark that the closed case can
be treated with similar methods.
Recall that the equation we are dealing with is $Au=f$
with linear homeomorphism $A:\tilde{H}^{t}(\Gamma)\to H^{-t}(\Gamma)$.
For simplicity we consider right hand sides satisfying $f\in L^2(\Gamma)$.
We employ the continuous piecewise affine functions with homogeneous boundary conditions
$S_P:=\tilde{S}_P^1$.
If $n\geq2$,
the admissible partitions will be used synonymous with conforming
triangulations, with the newest vertex bisection
algorithm for refinements.
So we have $S_P\subset\tilde{H}^{2t}(\Gamma)$,
and we make an additional assumption that $A:\tilde{H}^{2t}(\Gamma)\to
L^2(\Gamma)$ is bounded, in order to keep the useful property $f-Av\in L^2(\Gamma)$
for any finite element function $v\in S_P$.
Note that this assumption is satisfied for our main example -- the
hypersingular integral operator.
Note also that if $t>\frac12$ then we need $\mns\geq2$, i.e., we need
the space $\Omega$ in which $\Gamma$ lies to be at least a $C^{1,1}$ manifold.

Starting from this section, we shall often dispense with giving explicit
names to constants, and use the Vinogradov-style notation $X\lesssim
Y$, which means $X\leq C\cdot Y$ with some constant $C$ that is allowed to
depend only on the operator $A$ and on (geometry of) the set of
admissible partitions $\adm(P_0)$.
The following is an extension of Lemma \ref{l:res-zero} to $t\geq0$
and piecewise linear finite element spaces.
In contrast to Lemma \ref{l:res-zero}, note that $\omega^*$ includes a buffer layer of triangles around the
refined triangles, so that $\omega^*=\omega(\Gamma^*)$ with $\Gamma^*$ as
in Lemma \ref{l:res-zero}.
By using the Scott-Zhang quasi-interpolation operator in the proof,
it is possible to do without the buffer layer if $t>\frac12$,
which is however not terribly exciting since the relevant case to us is $t=\frac12$.

We define the {\em oscillation} for $v\in S_P$ by
\begin{equation}\label{e:osc-pos}
\osc(v,P) = \|h_{P}^{t}(f-Av) - h_{P}^{-t}w\|,
\end{equation}
where, with $\hat{P}$ the uniform refinement of $P$, $w=Q_{\hat P}h_P^{2t}(f-Av)\in S_{\hat P}$ is the Cl\'ement interpolator of $h_P^{2t}(f-Av)$, given by
\begin{equation}
 Q_{\hat P}g = \sum_{z\in N_{\hat P}\setminus\partial\Gamma} g_z(z)\phi_z,
\end{equation}
where $N_{\hat P}$ is the set of all nodes in $\hat P$,
$\phi_z\in S_{\hat P}$ is the standard nodal basis function at $z$,
and $g_z\in\mathbb{P}_1$ is the $L^2$-orthogonal projection of $g$ onto the affine functions on the star $\hat\omega(z)$ around $z$ with respect to $\hat P$.

\begin{lemma}\label{l:res-pos}
Let $P,P'\in\adm(P_0)$ with $P\preceq P'$,
and let
$\omega^*=\bigcup_{\tau\in P\setminus P'}\omega(\tau)$. 
Then we have
\begin{equation}\label{e:local-discrete-pos}
  \len u_{P}-u_{P'} \ren
  \lesssim
  \|h_{P}^{t}r_{P}\|_{\omega^*} 
  \lesssim
  \|h_{P}/h_{P'}\|_{\infty}^t \len u_{P}-u_{P'} \ren + \|h_{P}^{t}r_{P}-h_{P}^{-t}v\|_{\omega^*},
\end{equation}
for any $v\in S_{P'}$ with $\supp\,v\subseteq \overline{\omega^*}$, where $\|\cdot\|_{\infty}$ denotes the $L^\infty$-norm.
Moreover, we have the following global bounds
\begin{equation}\label{e:global-pos}
  \len u-u_{P} \ren
  \lesssim
  \|h_{P}^{t}r_{P}\|
  \lesssim
  \|h_{P}/h_{P'}\|_{\infty}^t \len u-u_{P} \ren + \|h_{P}^{t}r_{P}-h_{P}^{-t}v\|,
\end{equation}
for any $v\in S_{P'}$.

Furthermore, for $\gamma\subseteq\Gamma$ and $\hat\omega(\gamma)=\bigcup\{\hat\omega(z):z\in N_{\hat P}\cap\overline{\gamma}\}$,
we have 
\begin{equation}\label{e:est-dom-osc}
\|h_{P}^{t}r_P - h_{P}^{-t}Q_{\hat P}
h_{P}^{2t}r_P\|_{\gamma}^2
\leq
C_Q\|h_{P}^{t}r_P\|_{\hat\omega(\gamma)}^2,
\end{equation}
for some constant $C_Q>0$,
i.e., the estimator dominates the oscillation on the Galerkin solutions.
In particular, we have the equivalence
\begin{equation}
\alpha_1\|h_P^tr_P\|^2
\leq
\len u-u_P\ren^2 + \osc(u_P,P)^2
\lesssim
\|h_P^tr_P\|^2,
\end{equation}
where $\alpha_1>0$ is a contstant.
\end{lemma}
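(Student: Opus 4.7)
The overall strategy is to adapt the argument of Lemma \ref{l:res-zero} from piecewise constants with the $L^2$-projection to continuous piecewise linears with the quasi-interpolation operator $Q_P$. For the first inequality in \eqref{e:local-discrete-pos}, set $e = u_{P'} - u_P \in S_{P'}$. Every triangle $\sigma \in P$ that has not been refined (i.e., $\sigma \in P \cap P'$) satisfies that $e|_\sigma$ is affine, so $e|_{\omega(\tau)} \in S_P|_{\omega(\tau)}$ whenever $\omega(\tau)$ avoids the refined region. By the locality property $(Q_P v)|_\tau = (Q_P v|_{\omega(\tau)})|_\tau$, this yields $e - Q_P e$ supported in the buffer set $\omega^*$. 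Galerkin orthogonality gives $\langle r_P, Q_P e\rangle = 0$, whence
\begin{equation*}
\len e\ren^2 = \langle r_P,\, e - Q_P e\rangle \leq \|h_P^{t} r_P\|_{\omega^*}\,\|h_P^{-t}(e - Q_P e)\|_{\omega^*},
\end{equation*}
and the second factor is bounded by $\len e\ren$ via \eqref{e:direct-pos-quasi} with $r = -t$, $s = t$ together with \eqref{e:eneql2}.

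For the second inequality in \eqref{e:local-discrete-pos}, I adapt the duality trick in \eqref{e:local-discrete-arg}. For $v \in S_{P'}$ with $\supp v \subseteq \overline{\omega^*}$,
\begin{equation*}
\|h_P^{-t} v\|_{\omega^*}^2 = \langle h_P^{-t} v - h_P^{t} r_P,\, h_P^{-t} v\rangle + \langle r_P,\, v\rangle,
\end{equation*}
and since $v \in S_{P'}$, the Galerkin identity for $u_{P'}$ turns the last pairing into $\langle A(u_{P'} - u_P),\, v\rangle$. Cauchy--Schwarz handles the first term; for the second, \eqref{e:elliptic} and \eqref{e:eneql2} give the bound $\len u_{P'}-u_P\ren\,\|v\|_{\tilde{H}^t(\Gamma)}$, and then the inverse inequality \eqref{e:std-inverse} (valid for $d=1$ up to $s < \tfrac32$) provides $\|v\|_{\tilde{H}^t(\Gamma)} \lesssim \|h_{P'}^{-t} v\|_{\omega^*} \leq \|h_P/h_{P'}\|_\infty^{t}\,\|h_P^{-t} v\|_{\omega^*}$. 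Dividing through by $\|h_P^{-t} v\|_{\omega^*}$ and combining with the triangle inequality $\|h_P^{t} r_P\|_{\omega^*} \leq \|h_P^{t} r_P - h_P^{-t} v\|_{\omega^*} + \|h_P^{-t} v\|_{\omega^*}$ completes the local discrete lower bound.

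The global bounds \eqref{e:global-pos} are proved in exactly the same way with $\omega^* = \Gamma$ and $e = u - u_P$, using $Au = f$ in place of the Galerkin identity for $u_{P'}$; the buffer issue disappears in the global setting. For the domination estimate \eqref{e:est-dom-osc}, write the left-hand side as $\|h_P^{-t}(g - Q_{\hat P} g)\|_\gamma$ with $g = h_P^{2t} r_P$, bound it by the triangle inequality, and use the $L^2$-stability of the Cl\'ement operator $\|Q_{\hat P} g\|_\tau \lesssim \|g\|_{\hat\omega(\tau)}$ together with $h_{\hat P} \sim h_P$ (since $\hat P$ is a single uniform refinement of $P$) to control the resulting term by $\|h_P^{t} r_P\|_{\hat\omega(\gamma)}$.

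The upper half of the final equivalence follows by combining the global upper bound $\len u - u_P\ren \lesssim \|h_P^{t} r_P\|$ with \eqref{e:est-dom-osc} taken at $\gamma = \Gamma$. For the lower half, apply the second inequality of \eqref{e:global-pos} with $P' = \hat P$, for which $\|h_P/h_{\hat P}\|_\infty^{t}$ is bounded by a constant depending only on $\shape$, and choose $v = Q_{\hat P} h_P^{2t} r_P \in \tilde{S}_{\hat P}^1$; the right-hand side is then precisely $\len u-u_P\ren + \osc(u_P,P)$ up to constants. The main technical difficulty lies in the discrete lower bound, since the inverse estimate forces the $\|h_P/h_{P'}\|_\infty^{t}$ factor, and one must verify that \eqref{e:std-inverse} is applicable to $v$ in the $\tilde{H}^t(\Gamma)$ sense; this is ensured because $v \in \tilde{S}_{P'}^1$ is continuous, vanishes on $\partial\Gamma$, and vanishes on $\partial\omega^* \cap \Gamma$ (by continuity and the support restriction), justifying the passage between $H^t$ and $\tilde{H}^t$ when $t > \tfrac12$.
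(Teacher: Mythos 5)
Your proof is correct and follows essentially the same route as the paper: quasi-interpolation plus Galerkin orthogonality (exploiting that $e-Q_Pe$ vanishes outside the buffered set $\omega^*$) for the upper bounds, the duality identity $\|h_P^{-t}v\|_{\omega^*}^2=\langle h_P^{-t}v-h_P^{t}r_P,h_P^{-t}v\rangle+\langle Ae,v\rangle$ combined with the inverse estimate \eqref{e:std-inverse} for the lower bounds, and Cl\'ement stability for \eqref{e:est-dom-osc}. The only difference is cosmetic: the paper derives the global upper bound by testing $r_P$ against an arbitrary $v\in\tilde{H}^t(\Gamma)$ rather than directly with $e=u-u_P$, and you spell out the proofs of \eqref{e:est-dom-osc} and the final equivalence, which the paper leaves implicit.
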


\begin{remark}
The global upper bound (i.e., the first inequality) of
\eqref{e:global-pos} has been established in \citet*{CMPS04}, 
and a similar bound involving local $L^p$-norms on stars appears in
\citet*{NPZ10}.
\end{remark}

\begin{proof}[Proof of Lemma \ref{l:res-pos}]
For $v\in \tilde{H}^t(\Gamma)$ and $v_P=Q_Pv\in S_P$ being the quasi-interpolant
of $v$ as in \eqref{e:direct-pos-quasi}, we have
\begin{equation}
\begin{split}
\langle r_P,v\rangle
&= \langle r_P,v-v_P\rangle
= \langle h_{P}^t r_P,h_{P}^{-t}(v-v_P)\rangle\\
&\leq\|h_{P}^tr_P\| \|h_{P}^{-t}(v-v_P)\|
\lesssim \|h_{P}^tr_P\| \|v\|_{\tilde{H}^t(\Gamma)},
\end{split}
\end{equation}
where in the last step we have used
\begin{equation}
\sum_{\tau\in P} h_\tau^{-2t}\|v-v_P\|_\tau^2
\lesssim 
\sum_{\tau\in P} \|v\|_{H^t(\omega(\tau))}^2
\lesssim 
\|v\|_{\tilde{H}^t(\Gamma)}^2,
\end{equation}
by local finiteness.
This gives the first inequality in \eqref{e:global-pos}.

With $v=u_{P'}-u_{P}$ and $v_P=Q_Pv\in S_P$, we infer
\begin{equation}
\begin{split}
\langle Av,v\rangle
&= \langle r_P,v-v_P\rangle
= \langle r_P,v-v_P\rangle_{\omega^*}\\
&\leq\|h_{P}^tr_P\|_{\omega^*} \|h_{P}^{-t}(v-v_P)\|_{\omega^*}
\lesssim \|h_{P}^tr_P\|_{\omega^*} \|v\|_{\tilde{H}^t(\Gamma)},
\end{split}
\end{equation}
where we have used the fact that $v_P=v$ outside $\omega^*$.
This gives the first inequality in \eqref{e:local-discrete-pos}.

Let either $e=u_{P'}-u_{P}$ or $e=u-u_{P}$.
Then in both cases, for $v\in S_{P'}$ and $\omega\supseteq\supp\, v$, we have
\begin{equation}
\begin{split}
\langle h_{P}^{-t}v,h_{P}^{-t}v\rangle_{\omega}
&= \langle h_{P}^{-2t}v,v\rangle
= \langle h_{P}^{-2t}v-r_P,v\rangle + \langle Ae,v\rangle\\
&= \langle h_{P}^{-t}v-h_{P}^{t}r_P,h_{P}^{-t}v\rangle + \langle Ae,v\rangle\\
&\leq \|h_{P}^{-t}v-h_{P}^{t}r_P\|_{\omega} \|h_{P}^{-t}v\|_{\omega} + \|Ae\|_{H^{-t}(\Gamma)} \|v\|_{\tilde{H}^t(\Gamma)}\\
&\lesssim \|h_{P}^{-t}v-h_{P}^{t}r_P\|_{\omega} \|h_{P}^{-t}v\|_{\omega} + \|Ae\|_{H^{-t}(\Gamma)} \|h_{P'}^{-t}v\|_{\omega},
\end{split}
\end{equation}
so that
\begin{equation}
\|h_{P}^{-t}v\|_{\omega} \lesssim \|h_{P}^{-t}v-h_{P}^{t}r_P\|_{\omega} + \|h_{P}/h_{P'}\|_{\infty}^t\|e\|_{\tilde{H}^t(\Gamma)}.
\end{equation}
From this, we infer
\begin{equation}
\|h_{P}^{t}r_P\|_{\omega} \leq \|h_{P}^{t}r_P - h_{P}^{-t}v\|_{\omega} +
\|h_{P}^{-t}v\|_{\omega} 
\lesssim \|h_{P}^{t}r_P - h_{P}^{-t}v\|_{\omega} + \|h_{P}/h_{P'}\|_{\infty}^t\|e\|_{\tilde{H}^t(\Gamma)},
\end{equation}
proving the both second inequalities in \eqref{e:local-discrete-pos} and \eqref{e:global-pos}.
\end{proof}

\begin{remark}[Saturation assumption]\label{r:pos-sat}
For a large number of non-residual \emph{a posteriori} error estimators for the
hypersingular integral equation on curves, \citet*{EFGP09} proved
that the estimators are equivalent to the global error, with the upper bound
depending on the saturation assumption: 
$\len u - u_P \ren\lesssim\len \hat u_P - u_P \ren$, where $\hat u_P$ is the Galerkin approximation from some
enriched space $\hat S_P\supset S_P$, which is typically the piecewise
linears on the uniform refinement of $P$.
Combining the discrete lower bound with the global upper bound from
Lemma \ref{l:res-pos}, we have
\begin{equation}
  \len u-u_{P} \ren
  \lesssim
  \len u_{P}-u_{P'} \ren + \|h_{P}^{t}r_{P}-h_{P}^{-t}v\|,
\end{equation}
for any $v\in S_{P'}$, where $P'$ is the uniform refinement of $P$.
At least in theory, this confirms the saturation assumption up to an oscillation term.
In practice though, to control the oscillation as defined here, it seems that the residual needs to be computed anyways.
The question of whether such an overhead is tolerable calls for further investigation.
\end{remark}

Let us get back to the residual based error
indicators $\|h_\tau^tr_P\|_\tau$ from Lemma \ref{l:res-pos}.
In view of the results in that lemma, the circumstances are very similar to what happens in the finite element
case, and in particular, the local quantities $\|h_P^tr_P\|_\tau$ as error
indicators will give rise to an adaptive algorithm that converges quasi-optimally in a certain sense.

\begin{lemma}\label{l:osc-red-pos}
Assume that
\begin{equation}\label{e:inverse-ineq-pos}
\|h_P^tAv\|^2 \leq C_A \len v \ren^2,
\qquad
v\in S_{P},
\end{equation}
for $P\in\adm(P_0)$, with the constant $C_A=C_A(A,\shape,\grade)$.
Then the followings hold.
\begin{enumerate}[a)]
\item
There is a constant $C_G>0$ such that
\begin{equation}\label{e:gal-opt-pos}
\len u-u_P \ren^2 + \osc(u_P,P)^2 \leq C_G \inf_{v\in S_P} \left( \len u-v \ren^2 + \osc(v,P)^2 \right),
\end{equation}
for any $P\in\adm(P_0)$.
\item
There exists a constant $\lambda>0$ such that
\begin{equation}\label{e:osc-red-pos}
\|h_{P'}^tr_{P'}\|^2 
\leq 
(1+\delta)\|h_{P}^tr_{P}\|^2 
- \lambda(1+\delta) \|h_{P}^tr_{P}\|_{\Gamma^*}^2 
+ C_\delta \len u_P-u_{P'} \ren^2,
\end{equation}
for any $P,P'\in\adm(P_0)$ with $P\preceq P'$,
and for any $\delta>0$, with $C_\delta$ depending on $\delta$,
where $\Gamma^*=\intr\bigcup_{\tau\in P\setminus P'}\overline\tau$.
\item
Let $P,P'\in\adm(P_0)$ be such that $P\preceq P'$,
and let
$\Gamma^*=\intr\bigcup_{\tau\in P\setminus P'}\overline\tau$.
If, for some $\mu\in(0,\frac12)$ it holds that
\begin{equation}\label{e:err-red-pos-l}
\len u-u_{P'}\ren^2+\osc(u_{P'},P')^2 \leq\mu\left(\len u-u_{P}\ren^2+\osc(u_{P},P)^2\right),
\end{equation}
then with $\theta=\frac{\alpha_1(1-2\mu)}{C_Q+\beta_1(1+2C_AC_Q)}$ and $\omega^*=\bigcup_{\tau\in P\setminus P'}\omega(\tau)$, we have
\begin{equation}\label{e:dorfler-pos-l}
\|h_{P}^tr_{P}\|_{\omega^*}^2 \geq \theta \|h_{P}^tr_{P}\|^2,
\end{equation}
where $\beta_1>0$ is a constant implicit in the local discrete upper bound in \eqref{e:local-discrete-pos},
such that $\len u_P-u_{P'} \ren^2\leq\beta_1\|h_{P}^tr_{P}\|_{\omega^*}^2$.
\end{enumerate}
\end{lemma}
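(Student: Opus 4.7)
My plan is to mirror the structure of Lemma \ref{l:osc-red-zero}, organizing everything around the ``oscillation operator'' $T_P\phi := \phi - h_P^{-2t}Q_{\hat P}h_P^{2t}\phi$ so that $\osc(v,P) = \|h_P^t T_P(f-Av)\|$. A preliminary fact I will use throughout is the weighted $L^2$-stability $\|h_P^t T_P\phi\|_\gamma \lesssim \|h_P^t\phi\|_{\hat\omega(\gamma)}$, which follows from the local $L^2$-stability of the Cl\'ement operator and the local quasi-uniformity of $h_P$ (in fact this is exactly the proof of \eqref{e:est-dom-osc} with $r_P$ replaced by a general $\phi$).

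For (a), I pick a near-minimizer $w\in S_P$ of the right-hand side of \eqref{e:gal-opt-pos} and apply the triangle inequality
\begin{equation}
\osc(u_P,P)^2 \leq 2\osc(w,P)^2 + 2\|h_P^t T_P A(w-u_P)\|^2,
\end{equation}
bounding the last term by $\lesssim C_A\len w-u_P\ren^2$ using $T_P$-stability followed by \eqref{e:inverse-ineq-pos}, and close with Galerkin orthogonality $\len u-u_P\ren^2+\len w-u_P\ren^2=\len u-w\ren^2$. For (b), I decompose $r_{P'}=r_P+A(u_P-u_{P'})$ and apply Young's inequality. The residual piece satisfies $\|h_{P'}^tr_P\|^2\leq\|h_P^tr_P\|^2-(1-\lambda_0^{2t})\|h_P^tr_P\|_{\Gamma^*}^2$, using that $h_{P'}\leq\lambda_0 h_P$ on $\Gamma^*$ (with $\lambda_0<1$ the one-step bisection contraction) while $h_{P'}=h_P$ on $\Gamma\setminus\Gamma^*$. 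The remainder is controlled by monotonicity $h_{P'}\leq h_P$ followed by \eqref{e:inverse-ineq-pos}, giving $\|h_{P'}^tA(u_P-u_{P'})\|^2\leq C_A\len u_P-u_{P'}\ren^2$; collecting the estimates yields \eqref{e:osc-red-pos} with $\lambda=1-\lambda_0^{2t}$ and $C_\delta=(1+\delta^{-1})C_A$.

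Part (c) is the heart. The same algebraic manipulation as in Lemma \ref{l:osc-red-zero}(c), combining \eqref{e:err-red-pos-l} with Galerkin orthogonality, gives
\begin{equation}
(1-2\mu)(\len u-u_P\ren^2+\osc(u_P,P)^2) \leq \len u_P-u_{P'}\ren^2 + \osc(u_P,P)^2 - 2\osc(u_{P'},P')^2.
\end{equation}
I split $\osc(u_P,P)^2 = \osc(u_P,P,\omega^*)^2 + \osc(u_P,P,\Gamma\setminus\omega^*)^2$ and treat the two pieces differently. On $\omega^*$, \eqref{e:est-dom-osc} yields $\osc(u_P,P,\omega^*)^2 \leq C_Q\|h_P^tr_P\|_{\omega^*}^2$, absorbing $\hat\omega(\omega^*)$ into $\omega^*$ by enlarging the star buffer by one layer if needed. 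On $\Gamma\setminus\omega^*$, the star structure of $\omega^*$ forces $\hat P$ and $\hat{P'}$, and therefore $Q_{\hat P}$ and $Q_{\hat{P'}}$, to coincide there, so
\begin{equation}
\osc(u_P,P,\Gamma\setminus\omega^*)^2 \leq 2\osc(u_{P'},P',\Gamma\setminus\omega^*)^2 + 2C_QC_A\len u_P-u_{P'}\ren^2,
\end{equation}
where Young's inequality, the $T_P$-stability, and \eqref{e:inverse-ineq-pos} control the cross term. Feeding in the local discrete upper bound $\len u_P-u_{P'}\ren^2\leq\beta_1\|h_P^tr_P\|_{\omega^*}^2$ from Lemma \ref{l:res-pos} and the lower equivalence $\alpha_1\|h_P^tr_P\|^2 \leq \len u-u_P\ren^2+\osc(u_P,P)^2$ delivers \eqref{e:dorfler-pos-l} with $\theta = \alpha_1(1-2\mu)/(C_Q+\beta_1(1+2C_AC_Q))$.

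The principal obstacle is the nonlocality of $Q_{\hat P}$: unlike in the zero-order case, the oscillation does not decompose cleanly along the refined/unrefined interface. The star-based buffer built into $\omega^*$ is precisely what both allows \eqref{e:est-dom-osc} to be localized on one side and forces $Q_{\hat P}=Q_{\hat{P'}}$ on the other. Verifying the coincidence of the two Cl\'ement operators on $\Gamma\setminus\omega^*$, and ensuring $\hat\omega(\omega^*)\subseteq\omega^*$ up to a controlled enlargement, is a small but delicate geometric bookkeeping that must be done carefully to land the constants exactly as stated.
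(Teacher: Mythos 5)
Your proof is correct and follows essentially the same route as the paper: part (a) via the triangle inequality, the stability of the oscillation operator, \eqref{e:inverse-ineq-pos}, and Galerkin orthogonality; part (b) via $h_{P'}\leq\lambda_0 h_P$ on $\Gamma^*$ together with Young's inequality; and part (c) via the same algebraic reduction, the coincidence of the Cl\'ement operators away from the refined region, \eqref{e:est-dom-osc}, and the local discrete upper bound, landing the exact constant $\theta$. The only (immaterial) difference is that the paper splits the oscillation at the thinner buffer $\hat\omega$ of $\hat P$-triangles touching $\Gamma^*$ rather than at $\omega^*$, and it shares the same one-layer bookkeeping subtlety for $\hat\omega(\cdot)$ that you correctly flag.
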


\begin{remark}
The estimate \eqref{e:inverse-ineq-pos} is proved in Section
\ref{s:inverse} for a general class of singular integral operators of
positive order, under the hypothesis
that $\hat{A}:H^{t+\sigma}(\Omega)\to H^{-t+\sigma}(\Omega)$ is bounded for some
$\sigma>\max\{\frac{n}2,t\}$, where $\hat{A}$
is an extension of $A$ to $\Omega$.
Recalling that $\Omega$ is a $C^{\mns-1,1}$-manifold,
and assuming that $t\leq\frac{n}2$,
the condition on $\sigma$ translates to $\mns>\frac{n}2+t$, see Remark
\ref{r:bdd-bdry-int} for details.
Therefore for $n=1$ and $n=2$ we need a $C^{1,1}$
curve or a surface, respectively, assuming that $\frac12\leq t<1$.
Note that the boundary of $\Gamma$ is allowed to be Lipschitz.
Anticipating future developments on \eqref{e:inverse-ineq-pos}, 
the rest of this section is presented so as to depend only on the
assumption \eqref{e:inverse-ineq-pos}.
\end{remark}

\begin{proof}[Proof of Lemma \ref{l:osc-red-pos}]
For $v\in S_P$ we have
\begin{equation}
\begin{split}
\osc(u_P,P) 
&= \|h_{P}^{t}r_P - h_{P}^{-t}Q_{\hat P} h_{P}^{2t}r_P\|\\
&\leq \osc(v,P) + \|h_{P}^{t}A(v-u_P) - h_{P}^{-t}Q_{\hat P} h_{P}^{2t}A(v-u_P)\|\\
&\leq \osc(v,P) + C_Q^{1/2}\|h_{P}^{t}A(v-u_P)\|
\leq \osc(v,P) + C_Q^{1/2}C_A^{1/2}\len v-u_P \ren.
\end{split}
\end{equation}
With this estimate at hand, part a) of the lemma can be proven in
exactly the same way as Lemma \ref{l:osc-red-zero} a).

Part b) follows from
\begin{equation}
\|h_{P'}^tr_{P'}\|^2
\leq
(1+\delta)\|h_{P'}^tr_{P}\|^2 + C_A (1+\delta^{-1})\len u_{P}-u_{P'} \ren^2,
\end{equation}
and the fact that $h_{P'}\leq\lambda_0h_{P}$ on $\Gamma^*$ for
some constant $\lambda_0<1$.

For c), the global lower bound, property \eqref{e:err-red-pos-l}, and the
Galerkin orthogonality give
\begin{equation}\label{e:osc-red-pos-pf-1}
\begin{split}
 \alpha_1(1-2\mu)\|h_{P}^tr_P\|^2
&\leq
(1-2\mu)\left( \len u-u_{P}\ren^2 + \osc(u_P,P)^2\right)\\
&\leq
\len u-u_{P}\ren^2 + \osc(u_P,P)^2 - 2\len u-u_{P'}\ren^2 - 2\osc(u_{P'},P')^2\\
&\leq
\len u_{P}-u_{P'}\ren^2 + \osc(u_P,P)^2 - 2\,\osc(u_{P'},P')^2.
\end{split}
\end{equation}
For $\tau\in\hat{P}\cap\hat{P}'$ with all its neighbors also in
$\hat{P}\cap\hat{P}'$, 
where $\hat{P}'$ is the uniform refinement of $P'$,
we have $(Q_{\hat P}v)|_\tau = (Q_{\hat P'}v)|_\tau$, and also $h_P=h_{P'}$ there.
Hence, with $\hat{\omega}=\bigcup\{\tau\in\hat{P}:\exists\sigma\in P\setminus
P',\,\overline{\tau}\cap\overline{\sigma}\neq\varnothing\}$, we have
\begin{multline}
\|h_{P}^{t}r_P - h_{P}^{-t}Q_{\hat P} h_{P}^{2t}r_P\|_{\Gamma\setminus\hat{\omega}}
\leq
\|h_{P'}^{t}r_P - h_{P'}^{-t}Q_{\hat P'} h_{P'}^{2t}r_P\|\\
\leq
\|h_{P'}^{t}r_{P'} - h_{P'}^{-t}Q_{\hat P'}
h_{P'}^{2t}r_{P'}\|
+
\|h_{P'}^{t}A(u_P-u_{P'}) - h_{P'}^{-t}Q_{\hat P}
h_{P'}^{2t}A(u_P-u_{P'})\|\\
\leq
\osc(u_{P'},P') + C_Q^{1/2}\|h_{P'}^{t}A(u_P-u_{P'})\|.
\end{multline}
Combining this with \eqref{e:est-dom-osc}, we infer
\begin{equation}
\osc(u_P,P)^2
\leq
2\,\osc(u_{P'},P')^2
+
C_Q\|h_{P}^{t}r_P\|_{\omega^*}^2
+
2C_QC_A\len u_P-u_{P'}\ren^2,
\end{equation}
which, then on account of \eqref{e:osc-red-pos-pf-1} and the local
discrete upper bound, proves the claim.
\end{proof}

Now we prove an analogue of Proposition \ref{p:cont-zero} on error reduction.

\begin{proposition}\label{p:cont-pos}
Let the assumption \eqref{e:inverse-ineq-pos} of Lemma \ref{l:osc-red-pos} hold.
Let $P,P'\in\adm(P_0)$ be with $P\preceq P'$,
and let
$\Gamma^*=\intr\bigcup_{\tau\in P\setminus P'}\overline\tau$. 
Suppose, for some $\vartheta\in(0,1)$ that
\begin{equation}\label{e:dorfler-pos}
\|h_P^tr_P\|_{\Gamma^*} \geq\vartheta\|h_P^tr_P\|.
\end{equation}
Then there exist constants $\gamma\geq0$ and $\mu\in(0,1)$ such that
\begin{equation}
\len u-u_{P'}\ren^2+\gamma \|h_{P'}^tr_{P'}\|^2 \leq\mu\left(\len u-u_{P}\ren^2+\gamma \|h_P^tr_P\|^2\right).
\end{equation}
\end{proposition}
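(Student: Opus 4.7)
My plan is to mirror the structure of the proof of Proposition \ref{p:cont-zero}, with $\|h_P^t r_P\|^2$ playing the combined role that $\|r_P\|^2$ and $\osc_r(P,\Gamma)^2$ played there. The ingredients I will use are the Galerkin orthogonality \eqref{e:gal-orth}, the estimator reduction from Lemma \ref{l:osc-red-pos} b), the D\"orfler hypothesis \eqref{e:dorfler-pos}, and the global upper bound $\len u-u_P\ren\lesssim\|h_P^t r_P\|$ from the first inequality of \eqref{e:global-pos}. Conveniently, neither the local discrete lower bound nor Lemma \ref{l:osc-red-pos} c) will be needed.

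First I would use Galerkin orthogonality to write $\len u-u_{P'}\ren^2=\len u-u_P\ren^2-\len u_P-u_{P'}\ren^2$, and then apply Lemma \ref{l:osc-red-pos} b) to $\|h_{P'}^t r_{P'}\|^2$, using \eqref{e:dorfler-pos} to replace $-\lambda(1+\delta)\|h_P^t r_P\|_{\Gamma^*}^2$ by $-\lambda(1+\delta)\vartheta^2\|h_P^t r_P\|^2$. This gives
\begin{equation*}
\len u-u_{P'}\ren^2+\gamma\|h_{P'}^t r_{P'}\|^2\leq\len u-u_P\ren^2+\gamma(1+\delta)(1-\lambda\vartheta^2)\|h_P^t r_P\|^2-(1-\gamma C_\delta)\len u_P-u_{P'}\ren^2,
\end{equation*}
for arbitrary $\delta,\gamma>0$. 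Insisting on $\gamma\leq 1/C_\delta$ renders the last term non-positive, and I would simply drop it. To create strict contraction on the energy side, I would then split $\len u-u_P\ren^2=(1-\beta)\len u-u_P\ren^2+\beta\len u-u_P\ren^2$ and apply the global upper bound to the $\beta$-piece, reaching
\begin{equation*}
\len u-u_{P'}\ren^2+\gamma\|h_{P'}^t r_{P'}\|^2\leq(1-\beta)\len u-u_P\ren^2+\bigl[\beta C_{\mathrm{up}}^2+\gamma(1+\delta)(1-\lambda\vartheta^2)\bigr]\|h_P^t r_P\|^2,
\end{equation*}
where $C_{\mathrm{up}}$ is the constant in $\len u-u_P\ren\leq C_{\mathrm{up}}\|h_P^t r_P\|$.

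It then remains to balance the parameters so that the contraction factor $\mu=\max\{1-\beta,\,(1+\delta)(1-\lambda\vartheta^2)+\beta C_{\mathrm{up}}^2/\gamma\}$ is strictly less than $1$ while $\gamma\leq 1/C_\delta$. One picks $\delta>0$ small enough that $\rho:=(1+\delta)(1-\lambda\vartheta^2)<1$, then $\beta>0$ small enough that $2\beta C_{\mathrm{up}}^2/(1-\rho)\leq 1/C_\delta$, and finally $\gamma=2\beta C_{\mathrm{up}}^2/(1-\rho)$, yielding $\mu\leq\max\{1-\beta,\,(1+\rho)/2\}<1$. The main obstacle is precisely this three-way juggling of $\delta,\beta,\gamma$, but it is routine once one observes that the global upper bound alone, with no additional lower-bound information, already converts residual contraction into energy contraction.
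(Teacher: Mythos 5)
Your proposal is correct and follows essentially the same route as the paper's proof: Galerkin orthogonality, the estimator reduction of Lemma \ref{l:osc-red-pos} b) with the $\len u_P-u_{P'}\ren^2$ term absorbed via $\gamma\leq 1/C_\delta$, the D\"orfler property, and the global upper bound to convert a small fraction of the energy error into the estimator. The only difference is bookkeeping — you apply \eqref{e:dorfler-pos} to the whole $\Gamma^*$ term up front, whereas the paper retains it and chooses the splitting parameter $b=\lambda\vartheta^2$ to make its coefficient nonpositive — and your parameter balancing of $\delta$, $\beta$, $\gamma$ checks out.
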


\begin{proof}
From the Galerkin orthogonality and Lemma \ref{l:osc-red-pos} b) we have
\begin{multline}
\len u-u_{P'}\ren^2 + \gamma \|h_{P'}^tr_{P'}\|^2
=
\len u-u_{P}\ren^2 - \len u_{P}-u_{P'}\ren^2 + \gamma \|h_{P'}^tr_{P'}\|^2\\
\leq
\len u-u_{P}\ren^2
+ \gamma(1+\delta)\|h_{P}^tr_{P}\|^2 
- \gamma\lambda(1+\delta) \|h_{P}^tr_{P}\|_{\Gamma^*}^2 
+ (\gamma C_\delta -1) \len u_P-u_{P'} \ren^2\\
\leq
\len u-u_{P}\ren^2
+ \gamma(1+\delta)\|h_{P}^tr_{P}\|^2 
- \gamma\lambda(1+\delta) \|h_{P}^tr_{P}\|_{\Gamma^*}^2,
\end{multline}
for $0<\gamma\leq 1/C_\delta$.
The idea, introduced in \citet*{CKNS08}, is to use the global upper bound and
the D\"orfler property \eqref{e:dorfler-pos} to bound fractions of the
first two terms by the third term.
For any $a,b>0$ we have
\begin{multline}
\len u-u_{P'}\ren^2 + \gamma \|h_{P'}^tr_{P'}\|^2\\
\leq
(1-\gamma a)\len u-u_{P}\ren^2
+ \gamma(Ca+1+\delta)\|h_{P}^tr_{P}\|^2 
- \gamma\lambda(1+\delta) \|h_{P}^tr_{P}\|_{\Gamma^*}^2\\
\leq
(1-\gamma a)\len u-u_{P}\ren^2
+ \gamma(Ca+1+\delta-b)\|h_{P}^tr_{P}\|^2\\ 
+ \gamma\left(\frac{b}{\vartheta^2}-\lambda(1+\delta)\right) \|h_{P}^tr_{P}\|_{\Gamma^*}^2,
\end{multline}
with the constant $C$ coming from the global upper bound.
We choose $b=\lambda\vartheta^2>0$ so that the third term is negative no matter
how small $\delta>0$ is,
and then choose $a>0$ and $\delta>0$ so small that $Ca+\delta<b$.
\end{proof}

For $u\in\tilde{H}^t(\Gamma)$ the solution of $Au=f$ with $f\in L^2(\Gamma)$,
and for $P\in\adm(P_0)$,
we define
\begin{equation}\label{e:dist-r-pos}
\dist_0(u,S_P) = \inf_{v\in S_P} \left( \len u-v \ren^2+\osc(v,P)^2 \right)^{\frac12}.
\end{equation}
Furthermore, for $\eps>0$ we define 
\begin{equation}
\card_0(u,\eps) = \min\{\#P-\#P_0:P\in\adm(P_0),\,\dist_0(u,S_P)\leq\eps\}.
\end{equation}
The following is an analogue of Proposition \ref{p:card-zero},
and we omit the proof, since the proof of Proposition \ref{p:card-zero} can be applied here {\em mutatis mutandis}.

\begin{proposition}\label{p:card-pos}
Let the assumption \eqref{e:inverse-ineq-pos} of Lemma \ref{l:osc-red-pos} hold.
Let $P\in\adm(P_0)$,
and let $\theta\in(0,\theta^*)$ with
$\theta^*=\frac{\alpha_1}{C_Q+\beta_1(1+2C_AC_Q)}$.
Suppose that $R\subseteq P$ is a subset whose cardinality is minimal up to a constant factor,
among all $R\subseteq P$ satisfying
\begin{equation}
\|h_P^tr_P\|_{\Gamma^*(R)}^2 \geq\theta\|h_P^tr_P\|^2,
\end{equation}
where $\Gamma^*(R)=\intr\bigcup_{\tau\in R}\overline\tau$.
Then we have
\begin{equation}
\#R\lesssim\card_0(u,\eps),
\end{equation}
where $\eps$ is defined by
\begin{equation}\label{e:def-eps-pos}
\eps^2 = \frac{\theta^*-{\theta}}{2C_G \theta^*} \left( \len u-u_{P}\ren^2+\osc(P,\Gamma)^2 \right).
\end{equation}
\end{proposition}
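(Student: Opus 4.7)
The plan is to adapt the proof of Proposition \ref{p:card-zero} essentially verbatim, using Lemma \ref{l:osc-red-pos} in place of Lemma \ref{l:osc-red-zero} and the single residual estimator $\|h_P^t r_P\|^2$ in place of the composite quantity $\|r_P\|^2 + \osc_r(P,\Gamma)^2$. The three structural ingredients carry over: quasi-optimality of the Galerkin approximation in the combined error-plus-oscillation norm (Lemma \ref{l:osc-red-pos} a)), the reverse D\"orfler property (Lemma \ref{l:osc-red-pos} c)), and the overlay bound \eqref{e:overlay}.

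Set $e(Q)=\len u-u_Q\ren^2+\osc(u_Q,Q)^2$ for $Q\in\adm(P_0)$, and put $\mu=(\theta^*-\theta)/(2\theta^*)\in(0,\tfrac12)$. By definition of $\card_0(u,\eps)$, pick $P_\eps\in\adm(P_0)$ with $\#P_\eps-\#P_0\leq\card_0(u,\eps)$ and $\dist_0(u,S_{P_\eps})\leq\eps$. Form the overlay $\tilde P=P\oplus P_\eps\succeq P_\eps$, so that $S_{P_\eps}\subseteq S_{\tilde P}$. Applying Lemma \ref{l:osc-red-pos} a) to $\tilde P$ with trial functions drawn from $S_{P_\eps}$ and using the choice of $\eps$ from \eqref{e:def-eps-pos} yields
\begin{equation}
e(\tilde P)\leq C_G\inf_{v\in S_{P_\eps}}\bigl(\len u-v\ren^2+\osc(v,\tilde P)^2\bigr)\leq \mu\,e(P),
\end{equation}
once one has a monotonicity-type bound $\osc(v,\tilde P)\lesssim\osc(v,P_\eps)$ for $v\in S_{P_\eps}$.

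Now invoke Lemma \ref{l:osc-red-pos} c) applied to the pair $(P,\tilde P)$: since $e(\tilde P)\leq\mu\,e(P)$ with $\mu<\tfrac12$, the localized D\"orfler inequality
\begin{equation}
\|h_P^t r_P\|_{\omega^*}^2\geq\theta\,\|h_P^t r_P\|^2
\end{equation}
holds on $\omega^*=\bigcup_{\tau\in P\setminus\tilde P}\omega(\tau)$, for the value of $\theta$ stipulated in the proposition. Put $R^\star=\{\sigma\in P:\overline\sigma\subseteq\overline{\omega^*}\}$, so that $\Gamma^*(R^\star)=\omega^*$; then $R^\star$ itself is admissible for the minimization defining $R$, and by shape regularity and local quasi-uniformity $\#R^\star\lesssim\#(P\setminus\tilde P)$. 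Using the quasi-minimality of $R$ followed by \eqref{e:overlay},
\begin{equation}
\#R\lesssim\#R^\star\lesssim\#(P\setminus\tilde P)\leq\#\tilde P-\#P\leq\#P_\eps-\#P_0\leq\card_0(u,\eps),
\end{equation}
which is the desired estimate.

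The one point requiring some care, and which I expect to be the main technical obstacle, is the monotonicity claim $\osc(v,\tilde P)\lesssim\osc(v,P_\eps)$ for $v\in S_{P_\eps}$; this is transparent in the zero-order setting (an additivity property of Slobodeckij seminorms) but in the positive-order setting one must inspect the definition \eqref{e:osc-pos}, exploit $h_{\tilde P}\leq h_{P_\eps}$, and use the local $L^2$-stability of the Cl\'ement operator $Q_{\hat P}$. Any resulting multiplicative constant can be absorbed into the generic $\lesssim$ and, if necessary, the threshold $\theta^*$.
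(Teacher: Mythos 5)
Your proposal is correct and coincides with the paper's intended argument: the paper omits this proof entirely, stating only that the proof of Proposition~\ref{p:card-zero} applies \emph{mutatis mutandis}, and your write-up is exactly that adaptation (overlay $\tilde P=P\oplus P_\eps$, quasi-optimality from Lemma~\ref{l:osc-red-pos}~a), the reverse D\"orfler property from Lemma~\ref{l:osc-red-pos}~c), and the overlay cardinality bound \eqref{e:overlay}). You have moreover correctly isolated the two places where the adaptation is not literal --- the buffer layer $\omega^*$ versus $\Gamma^*(R)$, which you handle by local finiteness, and the monotonicity of the positive-order oscillation \eqref{e:osc-pos} under refinement --- which are precisely the details hidden in the paper's ``mutatis mutandis.''
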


For completeness, in the rest of this section
we give an explicit pseudocode for the adaptive BEM, 
then define the relevant approximation classes,
and finally record a theorem on quasi-optimality.

\vspace{2mm}
\begin{algorithm}[H]
\SetKwInOut{Params}{parameters}
\SetKwInOut{Output}{output}
\SetKwFor{For}{for}{do}{endfor}
\Params{conforming partition $P_0$, and $\theta\in[0,1]$}
\Output{$P_k\in\adm(P_0)$ and $u_k\in S_{P_k}$ for all $k\in\N_0$}
\BlankLine
\For{$k=0,1,\ldots$}{
Compute $u_k\in S_{P_k}$ as the Galerkin approximation of $u$ from
$S_{P_k}$\;\nllabel{a:galsolve-pos}
Identify a minimal (up to a constant factor) set $R_k\subset P_k$ of triangles satisfying
\begin{equation}
\|h_{P_k}^tr_k\|_{\Gamma^*} \geq\theta\|h_{P_k}^tr_k\|,
\end{equation}
where $r_k=f-Au_k$ and $\Gamma^*=\intr\bigcup_{\tau\in R_k}\overline\tau$\;\nllabel{a:mark-pos}
Set $P_{k+1}=\refine(P_k,R_k)$\;\nllabel{a:make-conf-pos}
}
\caption{Adaptive BEM ($t>0$)}\label{a:abem-pos}
\end{algorithm}
\vspace{2mm}

We define the \emph{approximation class} $\mathcal{A}_{0,s}\subset
A^{-1}(L^2(\Gamma))$ with $s\geq0$ to be the set of $u$ for which
\begin{equation}
|u|_{\mathcal{A}_{0,s}} = \sup_{\eps>0} \left( \card_0(u,\eps)^s\eps \right)<\infty.
\end{equation}

The following result is immediate.

\begin{theorem}\label{t:comp-pos}
Let the assumption \eqref{e:inverse-ineq-pos} of Lemma \ref{l:osc-red-pos} hold,
and in Algorithm \ref{a:abem-pos}, suppose that $\theta\in(0,\theta^*)$ with $\theta^*=\frac{\alpha_1}{C_Q+\beta_1(1+2C_AC_Q)}$.
Let $f\in L^2(\Gamma)$ and $u\in\mathcal{A}_{0,s}$ for some $s>0$.
Then we have
\begin{equation}
\len u-u_k\ren^2+\osc(u_k,P_k)^2
\leq
C |u|_{\mathcal{A}_{0,s}}^2 (\#P_k-\#P_0)^{-2s},
\end{equation}
where $C>0$ is a constant.
\end{theorem}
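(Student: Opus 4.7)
The plan is to follow the template already set by Theorem \ref{t:comp-zero} and essentially chain together Propositions \ref{p:cont-pos} and \ref{p:card-pos} with the complexity estimate \eqref{e:complete} for the refinement procedure, using the equivalence established in Lemma \ref{l:res-pos} to pass freely between the estimator $\|h_P^t r_P\|^2$, the error plus oscillation $\len u-u_P\ren^2+\osc(u_P,P)^2$, and the quantity $\dist_0(u,S_P)^2$ that defines the approximation class.

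First I would let $e(P)=\len u-u_P\ren^2+\gamma\|h_P^t r_P\|^2$ with $\gamma>0$ as produced by Proposition \ref{p:cont-pos}, and record that by the equivalence at the end of Lemma \ref{l:res-pos}, $e(P)$ is comparable to $\len u-u_P\ren^2+\osc(u_P,P)^2$, and in particular also comparable to $\eps^2$ as it appears in \eqref{e:def-eps-pos}. Proposition \ref{p:cont-pos} then yields the contraction $e(P_{k+1})\leq\mu\,e(P_k)$ for some $\mu\in(0,1)$, so that $e(P_m)\geq\mu^{-(k-m)}e(P_k)$ for every $m\leq k$. Next I would invoke Proposition \ref{p:card-pos} at each step: the marking set $R_k$ in Algorithm \ref{a:abem-pos} is, up to a fixed multiplicative constant, of minimal cardinality satisfying the D\"orfler criterion with $\theta<\theta^*$, hence
\begin{equation}
\#R_k\lesssim\card_0(u,c\,e(P_k)^{1/2}),
\end{equation}
where $c>0$ is a constant independent of $k$.

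Now I would pour in the hypothesis $u\in\mathcal{A}_{0,s}$, which reads $\card_0(u,\eps)\leq|u|_{\mathcal{A}_{0,s}}^{1/s}\eps^{-1/s}$. Combining this with the previous display and with the complexity estimate \eqref{e:complete} gives
\begin{equation}
\#P_k-\#P_0\leq C_c\sum_{m=0}^{k-1}\#R_m
\lesssim|u|_{\mathcal{A}_{0,s}}^{1/s}\sum_{m=0}^{k-1}e(P_m)^{-1/(2s)}.
\end{equation}
Using the geometric decay $e(P_m)^{-1/(2s)}\leq\mu^{(k-m)/(2s)}e(P_k)^{-1/(2s)}$ and summing the resulting geometric series (whose total is bounded independently of $k$ since $\mu<1$), one obtains $\#P_k-\#P_0\lesssim|u|_{\mathcal{A}_{0,s}}^{1/s}e(P_k)^{-1/(2s)}$, which after solving for $e(P_k)$ and using once more the equivalence of $e(P_k)$ with $\len u-u_k\ren^2+\osc(u_k,P_k)^2$ yields precisely the claimed bound.

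There is no real obstacle, since all the heavy lifting has already been done upstream; the only point requiring a moment of care is to keep track of which functional (the energy error, the error plus oscillation, the squared residual $\|h_P^t r_P\|^2$, or the quantity $\dist_0(u,S_P)^2$) is being used at each stage, and to apply the equivalence from Lemma \ref{l:res-pos} consistently so that the constant in the D\"orfler threshold $\theta<\theta^*$ from Proposition \ref{p:card-pos} is the same one used by Proposition \ref{p:cont-pos} and by the algorithm's marking step.
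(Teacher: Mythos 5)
Your proposal is correct and follows essentially the same route as the paper, which declares Theorem \ref{t:comp-pos} immediate from Propositions \ref{p:cont-pos} and \ref{p:card-pos}, the completion bound \eqref{e:complete}, and the geometric-series argument spelled out for the zero order case before Theorem \ref{t:comp-zero}. Your explicit attention to the equivalence from Lemma \ref{l:res-pos} needed to reconcile the contracted quantity $\len u-u_P\ren^2+\gamma\|h_P^tr_P\|^2$ with the error-plus-oscillation functional in $\dist_0$ is exactly the bookkeeping the paper leaves implicit.
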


\section{Negative order operators}
\label{s:negative}

Finally, we turn to the case $t<0$.
The domain $\Gamma$ can be either a closed manifold or a connected
polygonal subset of a closed manifold.
Recall that we are dealing with the linear homeomorphism $A:\tilde{H}^{t}(\Gamma)\to H^{-t}(\Gamma)$.
We will use the piecewise constant finite element spaces $S_P^0$,
and if $n\geq2$, take conforming triangulations as the class of admissible partitions.
Recall that $u_P\in S_P^0$ is the Galerkin approximation of $u$ from $S_P^0$,
and that $r_P=f-Au_P$ is its residual.

\cite{Faer00,Faer02} established the equivalence
\begin{equation}\label{e:faermann}
\len u-u_P\ren^2 \lesssim \sum_{z\in N_P} |r_P|_{-t,\omega(z)}^2
\lesssim \len u-u_P\ren^2,
\end{equation}
where $N_P$ is the set of all vertices in the triangulation $P$,
and proposed to use the local quantities $|r_P|_{-t,\omega(z)}$ as
error indicators for adaptive refinements.
An alternative proof, using interpolation spaces appeared in \citet*{CMS01}.
However, the discrete local counterparts to this equivalence have been open.
On the other hand, the weighted residual type error indicators,
$h_\tau^{1+t}|r_P|_{1,\tau}$ for $\tau\in P$,
have been around for a while, with a guaranteed global upper bound
\begin{equation}\label{e:global-upper-cms}
\len u-u_P\ren^2 \lesssim \sum_{\tau\in P} h_\tau^{2(1+t)}|r_P|_{1,\tau}^2,
\end{equation}
cf. \cite{CMS01}.
These indicators are computationally more attractive,
but for locally refined meshes no global lower bound or discrete local estimates have been known,
until the appearance of \citet*{FKMP11,FKMP11a} and this work.
The following lemma establishes the missing bounds for the
afore-mentioned error indicators.

\begin{lemma}\label{l:res-neg}
Let $P,P'\in\adm(P_0)$ be with $P\preceq P'$,
and let
$\Gamma^*=\intr\bigcup_{\tau\in P\setminus P'}\overline{\tau}$.
Furthermore, with $\omega^*=\omega(\Gamma^*)$ and $\phi_z\in S^1_P$ the standard nodal basis function at $z\in N_P$, let 
$\phi=\sum_{z\in N_P\cap\,\omega^*}\phi_z$.
Then for any $v\in S_{P'}^1$, it holds that
\begin{equation}\label{e:local-discrete-up-faermann}
  \len u_{P}-u_{P'} \ren^2
  \lesssim
  \sum_{z\in N_P\cap\,\omega^*} |r_{P}|_{-t,\omega(z)}^2
  + \|h_{P'}^{t}(r_{P}-v)\|_{\Gamma^*}^2 
  + \|\phi\,r_{P}-v\|_{H^{-t}(\Gamma)}^2.
\end{equation}
In addition to the above, for some $r\in[-t,\frac32)\cap(0,\frac12-2t)\cap[0,\mns]$, let $f\in H^r(\Gamma)$ and let $A:\tilde{H}^{r+2t}(\Gamma)\to H^{r}(\Gamma)$ be bounded.
Then for any $v\in S_{\hat P}^1$ and $w\in S_{P'}^0$, where $\hat P$ is the uniform refinement of $P$, we have
\begin{multline}\label{e:local-discrete-low-faermann}
  \sum_{z\in N_P\cap\,\omega^*} |r_{P}|_{-t,\omega(z)}^2
  \lesssim
  \sum_{z\in N_P\cap\,\omega^*} h_z^{2(r+t)}|r_{P}|_{r,\omega(z)}^2
  \lesssim
  \len u_{P}-u_{P'} \ren^2\\
  + \|h_{P}^{t}(r_{P}-w)\|_{\omega^*}^2 
  + \|h_{P}^{t}(r_{P}-v)\|_{\omega^*}^2 
  + \sum_{z\in N_P\cap\,\omega^*} h_z^{2(r+t)} |r_{P}-v|_{r,\omega(z)}^2,
\end{multline}
where $h_z=\min_{\{\tau:z\in\overline{\tau}\}}h_\tau$.
Moreover, for any $v\in S_{\hat P}^1$, we have the global bounds
\begin{multline}\label{e:global-neg}
  \sum_{z\in N_P} |r_{P}|_{-t,\omega(z)}^2
  \lesssim
  \sum_{z\in N_P} h_z^{2(r+t)}|r_{P}|_{r,\omega(z)}^2
  \lesssim
  \len u-u_{P} \ren^2\\
  + \|h_{P}^{t}(r_{P}-v)\|^2 
  + \sum_{z\in N_P} h_z^{2(r+t)} |r_{P}-v|_{r,\omega(z)}^2.
\end{multline}
\end{lemma}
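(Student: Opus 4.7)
The plan is to adapt the template of Lemmas \ref{l:res-zero} and \ref{l:res-pos}, substituting Faermann-style localization on vertex stars for the usual elementwise inverse estimates. Throughout, set $e=u_{P'}-u_P\in S^0_{P'}$ and let $e_P\in S^0_P$ be its $L^2$-orthogonal projection; then $e-e_P$ is supported in $\overline{\Gamma^*}$ and satisfies $\int_\tau(e-e_P)=0$ for every $\tau\in P$, and by Galerkin orthogonality $\len e\ren^2=\langle r_P,e-e_P\rangle$.

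\textbf{Upper bound \eqref{e:local-discrete-up-faermann}.} Since $\phi\equiv 1$ on $\Gamma^*\supseteq\supp(e-e_P)$, I would introduce $v\in S^1_{P'}$ via the three-term decomposition
\begin{equation*}
\langle r_P,e-e_P\rangle=\langle r_P-v,e-e_P\rangle+\langle v-\phi r_P,e-e_P\rangle+\langle \phi r_P,e-e_P\rangle.
\end{equation*}
The first summand is bounded by $L^2$ Cauchy--Schwarz on $\Gamma^*$ together with the inverse inequality \eqref{e:new-inverse} applied with exponent $-t>0$ to $e-e_P\in S^0_{P'}$, yielding $\|h_{P'}^{t}(r_P-v)\|_{\Gamma^*}\len e\ren$. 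The second is bounded via the $H^{-t}(\Gamma)$--$\tilde H^t(\Gamma)$ duality together with the $\tilde H^t$-stability of the $L^2$-projection onto $S^0_P$ (valid for $-\tfrac12<t\leq 0$), giving $\|\phi r_P-v\|_{H^{-t}(\Gamma)}\len e\ren$. For the third I would invoke the partition of unity to write
\begin{equation*}
\langle \phi r_P,e-e_P\rangle=\sum_{z\in N_P\cap\omega^*}\langle r_P,\phi_z(e-e_P)\rangle,
\end{equation*}
and use the mean-zero property of $e-e_P$ on every $\tau\in P$ to first subtract the piecewise-constant $L^2$-projection of $\phi_z(e-e_P)$ onto $S^0_P|_{\omega(z)}$ (which pairs trivially with $r_P$), so that a Faermann-style duality on the star delivers $\langle r_P,\phi_z(e-e_P)\rangle\lesssim|r_P|_{-t,\omega(z)}\,|e-e_P|_{t,\omega(z)}$. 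Cauchy--Schwarz in $z$ together with $\sum_z|e-e_P|_{t,\omega(z)}^2\lesssim\|e-e_P\|_{\tilde H^t(\Gamma)}^2\lesssim\len e\ren^2$ (from the seminorm additivity \eqref{e:slobodeckij-add} and $L^2$-projection stability) then yields the bound $(\sum_z|r_P|_{-t,\omega(z)}^2)^{1/2}\len e\ren$. Adding the three contributions, dividing by $\len e\ren$, and squaring gives \eqref{e:local-discrete-up-faermann}.

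\textbf{Lower bound \eqref{e:local-discrete-low-faermann} and global bound \eqref{e:global-neg}.} In both cases the first inequality is the Slobodeckij comparison $|u|_{-t,\omega(z)}\lesssim h_z^{r+t}|u|_{r,\omega(z)}$, which follows by a scaling argument on the reference star and is valid whenever $-t\leq r$, with constants made uniform by shape regularity. For the second inequality I would insert $v\in S^1_{\hat P}$ via $r_P=v+(r_P-v)$ to split
\begin{equation*}
\sum_z h_z^{2(r+t)}|r_P|_{r,\omega(z)}^2\lesssim\sum_z h_z^{2(r+t)}|r_P-v|_{r,\omega(z)}^2+\sum_z h_z^{2(r+t)}|v|_{r,\omega(z)}^2,
\end{equation*}
with the first sum already appearing on the right-hand side of the claim. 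For the second sum I would apply the Slobodeckij inverse estimate $h_z^r|v|_{r,\omega(z)}\lesssim\|v\|_{\omega(z)}$ on $S^1_{\hat P}$ and local finiteness to reduce to $\|h_P^{t}v\|^2$ (on $\omega^*$ in the local case, on $\Gamma$ in the global case). A further triangle inequality $\|h_P^{t}v\|\leq\|h_P^{t}(v-r_P)\|+\|h_P^{t}(r_P-w)\|+\|h_P^{t}w\|$ with $w\in S^0_{P'}$ (taking $w=0$ in the global case) combined with \eqref{e:new-inverse} and the energy-norm equivalence then controls the remainder by $\len u_P-u_{P'}\ren$, respectively $\len u-u_P\ren$.

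\textbf{Expected main obstacle.} The delicate step is the Faermann-style duality on each star in the upper bound: it must produce the \emph{seminorm} $|r_P|_{-t,\omega(z)}$ rather than the full $H^{-t}(\omega(z))$-norm. Because $\phi_z(e-e_P)$ is supported in $\omega(z)$ but is not itself mean-zero on the elements of $P$, propagating the mean-zero property of $e-e_P$ through the multiplication by $\phi_z$ forces the two-stage subtraction (first the $S^0_P$-projection of $\phi_z(e-e_P)$, then duality against the Slobodeckij seminorm), and this is precisely the reason both the weighted $L^2$ oscillation $\|h_{P'}^{t}(r_P-v)\|_{\Gamma^*}$ and the dual oscillation $\|\phi r_P-v\|_{H^{-t}(\Gamma)}$ must appear alongside the Faermann estimator in \eqref{e:local-discrete-up-faermann}.
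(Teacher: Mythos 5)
Your plan for the lower and global bounds has a genuine gap at its core. After the triangle inequality you are left with $\|h_P^t w\|$ (with $w\in S^0_{P'}$, or $\|h_P^t r_P\|$ itself when $w=0$), and you claim this is controlled by $\len u_P-u_{P'}\ren$, resp.\ $\len u-u_P\ren$, ``combined with \eqref{e:new-inverse} and the energy-norm equivalence.'' That cannot work: since $t<0$, the weight $h_P^{t}=h_P^{-(-t)}$ is a \emph{negative} power of the mesh size, whereas \eqref{e:new-inverse} bounds $\|h_P^{s}v\|$ for $s\geq0$, i.e.\ the opposite sign; moreover $r_P$ is not a discrete function, so no inverse estimate applies to it. The bounds $\|h_P^{t}r_P\|\lesssim\len u-u_P\ren$ and its local discrete analogue $\|h_P^{t}r_P\|_{\supp w}\lesssim\len u_P-u_{P'}\ren+\|h_P^{t}(r_P-w)\|_{\supp w}$ are precisely the efficiency estimates that were open before this work; the paper proves them by testing with $h_P^{2t}w$ (resp.\ a general $v\in L^2$), inserting its $L^2$-projection onto $S^0_P$ so that the Galerkin orthogonality converts $\langle r_P,\cdot\rangle$ into $\langle A(u_{P'}-u_P),\cdot\rangle$, and then invoking the direct estimate \eqref{e:direct-neg} together with the mapping property $A:\tilde H^{t}\to H^{-t}$. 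Your argument never uses the Galerkin equation on the finer mesh in this part, so there is no mechanism by which $\len u_P-u_{P'}\ren$ could appear; this step must be supplied.

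The upper bound is closer, but you localize on the wrong side of the duality. The paper estimates $\langle v,e\rangle_{\Gamma^*}\leq\|v\|_{H^{-t}}\|e\|_{\tilde H^{t}}$ and then localizes $\|\phi r_P\|_{H^{-t}}$ directly, star by star, using the operator $f\mapsto(f-\langle f,1\rangle_{\omega(z)})\phi_z$ applied to $r_P$ (a \emph{positive}-order, $-t>0$, localization, which is the easy direction). You instead decompose the test function via $\langle\phi r_P,e-e_P\rangle=\sum_z\langle r_P,\phi_z(e-e_P)\rangle$ and need $\sum_z|e-e_P|^2_{t,\omega(z)}\lesssim\|e-e_P\|^2_{\tilde H^{t}}$; but $t<0$, the quantity $|\cdot|_{t,\omega(z)}$ is not defined in the paper, and \eqref{e:slobodeckij-add} only gives superadditivity for positive order — for negative-order norms the inequality that holds over a covering is the reverse one (second inequality in \eqref{e:sobolev-add}), so this colocalization is exactly the nontrivial direction and needs a separate argument (e.g.\ via $\|\phi_z g\|_{\tilde H^{t}(\omega(z))}\lesssim\|g\|_{H^{t}(\omega(z))}$ plus bounded overlap, with care at $t=-\tfrac12$). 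You also rely on $\tilde H^{t}$-stability of the projection $e\mapsto e_P$, which the paper avoids entirely by noting that $\langle r_P,e-e_P\rangle=\langle r_P,e\rangle_{\Gamma^*}$ thanks to $r_P\perp_{L^2}S^0_P$. I recommend reorganizing the upper bound along the paper's lines and, for the lower bounds, supplying the Galerkin-orthogonality-based efficiency argument.
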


\begin{proof}
Set $s=-t>0$.
Let $e=u_{P'}-u_{P}$, and let $e_P\in S_P^0$ be the $L^2$-orthogonal projection of $e$ onto $S_P^0$.
Then for any $v\in S^1_{P'}$, we have
\begin{equation}
\begin{split}
\langle Ae,e\rangle
&= \langle r_P,e\rangle
= \langle r_P,e-e_P\rangle
= \langle r_P,e\rangle_{\Gamma^*}
= \langle r_P-v,e\rangle_{\Gamma^*}+\langle v,e\rangle_{\Gamma^*}\\
&\leq \|h_{P'}^{-s}(r_P-v)\|_{\Gamma^*} \|h_{P'}^{s}e\|_{\Gamma^*}+\|v\|_{H^{s}}\|e\|_{\tilde{H}^{-s}},
\end{split}
\end{equation}
where we have used the fact that $e-e_P=0$ outside $\Gamma^*$.
Upon using the inverse inequality $\|h_{P'}^{s}e\|\lesssim\|e\|_{\tilde{H}^{-s}}$, this gives
\begin{equation}
\len e\ren
\lesssim \|h_{P'}^{-s}(r_P-v)\|_{\Gamma^*}+\|v\|_{H^{s}}
\lesssim \|h_{P'}^{-s}(r_P-v)\|_{\Gamma^*} + \|\phi\,r_P-v\|_{H^{s}} + \|\phi\,r_P\|_{H^{s}}.
\end{equation}
To localize the last term, we follow an approach from \cite{CMS01}.
With $\phi_z\in S^1_P$ the standard nodal basis function at $z\in N_P$,
we have
\begin{equation}
\|\phi\,r_P\|_{H^{s}}
\lesssim
\sum_{z\in N_P\cap\,\omega^*}\|\phi_zr_P\|_{H^{s}(\omega(z))}.
\end{equation}
The linear operator $T:H^s(\omega(z))\to H^s(\omega(z))$ defined by 
$Tf=(f-\langle f,1\rangle_{\omega(z)})\phi_z$ satisfies $\|Tf\|_{H^s}\lesssim|f|_{H^s(\omega(z))}$, cf. \cite{CMS01}.
Hence
\begin{equation}
\|\phi_zr_P\|_{H^{s}}
=
\|\phi_zr_P-\phi_z\langle r_P,1\rangle_{\omega(z)}\|_{H^{s}}
\lesssim
\|r_P\|_{H^{s}(\omega(z))},
\end{equation}
where we have taken into account that $r_P\perp S_P^0$,
and \eqref{e:local-discrete-up-faermann} follows.

For the lower bounds, we first prove a couple of inequalities involving the auxiliary error indicator $\|h_P^{-s}r_P\|_{\tau}$.
Let $w\in S_{P'}^0$, and let $w_P\in S_P^0$ be the $L^2$-orthogonal projection of $h_P^{-2s}w$ onto $S_P^0$.
Then we have
\begin{equation}
\begin{split}
\langle h_P^{-s}w,h_P^{-s}w\rangle
&= \langle w-r_P,h_P^{-2s}w\rangle + \langle A(u_{P'}-u_P),h_P^{-2s}w-w_P\rangle\\
&\leq \langle w-r_P,h_P^{-2s}w\rangle + \|A(u_{P'}-u_P)\|_{H^s(\Gamma^*)}\|h_P^{-2s}w-w_P\|_{\tilde{H}^{-s}(\Gamma^*)}\\
&\lesssim \|h_P^{-s}(w-r_P)\|_{\supp\,w}\|h_P^{-s}w\| + \|u_{P'}-u_P\|_{H^{-s}(\Gamma)}\|h_P^{-s}w\|_{\Gamma^*},
\end{split}
\end{equation}
where in the last step we used \eqref{e:direct-neg}, and so
\begin{equation}\label{e:res-neg-1}
\|h_P^{-s}r_P\|_{\supp\,w}\lesssim \len u_{P}-u_{P'}\ren + \|h_P^{-s}(r_P-w)\|_{\supp\,w}.
\end{equation}
Let $v\in L^2$, and let $v_P\in S_P^0$ be the $L^2$-orthogonal projection of $v$ onto $S_P^0$.
Using \eqref{e:direct-neg}, then we have
\begin{equation}
\begin{split}
\langle h^{-s}r_P,h^{s}v\rangle
&= \langle r_P,v\rangle
= \langle A(u-u_P),v-v_P\rangle\\
&\lesssim \|A(u-u_P)\|_{H^s(\Gamma)}\|v-v_P\|_{\tilde{H}^{-s}(\Gamma)}
\lesssim \len u-u_P \ren \|h^{s}v\|,
\end{split}
\end{equation}
establishing
\begin{equation}\label{e:res-neg-2}
\|h_P^{-s}r_P\|\lesssim \len u-u_{P}\ren.
\end{equation}

On the other hand, for any $N\subseteq N_P$ and $v\in S_{\hat P}^1$, we have
\begin{equation}
\begin{split}
\sum_{z\in N} h_z^{2(r-s)} |r_P|_{r,\omega(z)}^2
&\lesssim
\sum_{z\in N} \left( h_z^{2(r-s)} |v|_{r,\omega(z)}^2 + h_z^{2(r-s)} |r_P-v|_{r,\omega(z)}^2\right)\\
&\lesssim
\sum_{z\in N} \left( \|h_P^{-s}v\|_{\omega(z)}^2 + h_z^{2(r-s)} |r_P-v|_{r,\omega(z)}^2 \right)\\
&\lesssim
\|h_P^{-s}v\|_{\gamma}^2 + \sum_{z\in N} h_z^{2(r-s)} |r_P-v|_{r,\omega(z)}^2\\
&\lesssim
\|h_P^{-s}r_P\|_{\gamma}^2 + \|h_P^{-s}(v-r_P)\|_{\gamma}^2 + \sum_{z\in N} h_z^{2(r-s)} |r_P-v|_{r,\omega(z)}^2,
\end{split}
\end{equation}
where $\gamma=\intr\bigcup_{z\in N}\omega(z)$.
Then the second inequalities in \eqref{e:local-discrete-low-faermann} and \eqref{e:global-neg} follow from \eqref{e:res-neg-1} 
and \eqref{e:res-neg-2}, respectively.
Finally, the first inequalities in \eqref{e:local-discrete-low-faermann} and \eqref{e:global-neg} are a
consequence of the fact that $r_P$ is $L^2$-orthogonal to $S_P^0$.
\end{proof}

Let us record some useful bounds on the various oscillation terms
that appeared in the preceding lemma.

\begin{lemma}\label{l:osc-bnd-neg}
Let $P,P'\in\adm(P_0)$ be with $P\preceq P'$,
and let $\gamma=\intr\bigcup_{\tau\in Q}\overline{\tau}$ with some
$Q\subseteq P$.
Assume that $f\in H^{r}(\Gamma)$ and that $A:\tilde{H}^{r+2t}(\Gamma)\to H^{r}(\Gamma)$ is bounded for some $r\in(-t,\frac32)\cap(0,\frac12-2t)\cap[0,\mns]$, so that $r_P\in
H^{r}(\Gamma)$.
Then we have
\begin{equation}\label{e:w-osc}
  \min_{w\in S_{P'}^0}\|h_{P}^{t}(r_{P}-w)\|_{\gamma}^2
  \lesssim
  \sum_{\tau\in Q} h_\tau^{2(r+t)} |r_P|_{r,\tau}^2.
\end{equation}
With $v=Q_{\hat P}r_P\in S_{\hat P}^1$ the quasi-interpolant of $r_P$, we also have
\begin{equation}\label{e:up-osc}
  \|h_{P}^{t}(r_{P}-v)\|_{\gamma}^2 
  + \sum_{z\in N_P\cap\gamma} |r_{P}-v|_{-t,\omega(z)}^2
  \lesssim
  \sum_{z\in N_P\cap\overline{\gamma}} h_z^{2(r+t)} |r_P|_{r,\omega(z)}^2,
\end{equation}
where $h_z=\min_{\{\tau:z\in\overline{\tau}\}}h_\tau$.
Finally, with $\Gamma^*$ and $\omega^*$ as in Lemma \ref{l:res-neg},
there exists $v\in S^1_{P'}$ such that $v=0$ outside $\omega^*$ and that
\begin{equation}\label{e:low-osc}
  \|h_{P'}^{t}(r_{P}-v)\|_{\Gamma^*}^2 
  + \|\phi\,r_{P}-v\|_{H^{-t}(\Gamma)}^2
  \lesssim
  \sum_{z\in N_P\cap\,\omega^*} h_z^{2(r+t)} |r_P|_{r,\omega(z)}^2.
\end{equation}
\end{lemma}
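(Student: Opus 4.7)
The plan is to realize each of the three minima/existence claims by an explicit (quasi-)interpolant and reduce to piecewise direct estimates. Throughout, I would use (i) $t<0$ together with $h_P\geq h_{\hat P}\geq h_{P'}$ to move weights between meshes, (ii) the subadditivity \eqref{e:slobodeckij-add}, and crucially (iii) the Poincar\'e-type bound $\|r_P\|_\tau\lesssim h_\tau^r|r_P|_{r,\tau}$ on every $\tau\in P$, which follows from $r_P$ having zero mean on each element (a consequence of $r_P\perp_{L^2}S_P^0$). For \eqref{e:w-osc}, I would take $w\in S_{P'}^0$ to be the element-wise $L^2$-orthogonal projection of $r_P$ onto the $P'$-piecewise constants, performed separately inside each $\tau\in Q$. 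Since $h_P\equiv h_\tau$ on $\tau$, the weight factorizes as $\|h_P^t(r_P-w)\|_\gamma^2=\sum_{\tau\in Q}h_\tau^{2t}\|r_P-w\|_\tau^2$; applying \eqref{e:direct-l2} on the $P'$-subdivision of $\tau$, using $r>0$ to absorb $h_\sigma^{2r}\leq h_\tau^{2r}$, and collapsing the Slobodeckij seminorms via \eqref{e:slobodeckij-add} yields the claim.

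For \eqref{e:up-osc} with $v=Q_{\hat P}r_P$ fixed, the weighted-$L^2$ term is handled by replacing $h_P$ by $h_{\hat P}$ (legitimate since $t<0$ and $h_P\geq h_{\hat P}$) and invoking \eqref{e:direct-pos-quasi} on $\hat P$ with exponent $t$ and regularity $r$. Regrouping the sum over $\hat P$-triangles into a sum over $P$-vertices uses $h_\sigma\sim h_z$ for any vertex $z$ of the parent of $\sigma$, $\omega_{\hat P}(\sigma)\subseteq\omega_P^{(1)}(z)$, and local finiteness. The fractional seminorm term is treated by applying \eqref{e:direct-quasi} star-wise with exponent $-t$ and regularity $r$ (permitted since $r>-t$), followed by passing from interpolatory to Slobodeckij seminorms through their equivalence, and summing over $z$ via local finiteness.

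For \eqref{e:low-osc}, I would construct $v=\sum_{z\in N_P\cap\omega^*}v_z$, where each $v_z\in S_{P'}^1$ is a $P'$-quasi-interpolation of $\phi_z r_P$ designed to vanish on $\partial\omega(z)$ (feasible via a Cl\'ement-type construction using only $P'$-nodes interior to $\omega(z)$, since $\phi_z|_{\partial\omega(z)}=0$). Then $v\in S_{P'}^1$ and $\supp v\subseteq\bigcup_z\omega(z)=\omega^*$. On $\Gamma^*$ the partition of unity yields $\phi\equiv1$, hence $r_P-v=\sum_z(\phi_z r_P-v_z)$, and a $P'$-triangle-wise direct estimate combined with local finiteness of the $\omega(z)$ bounds the weighted-$L^2$ term by $\sum_z h_z^{2(r+t)}\|\phi_z r_P\|_{H^r(\omega(z))}^2$. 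For the $H^{-t}$ term, the same decomposition $\phi r_P-v=\sum_z(\phi_z r_P-v_z)$ is a sum of functions with bounded-overlap supports, so a Slobodeckij Cauchy-Schwarz argument yields $\|\phi r_P-v\|_{H^{-t}(\Gamma)}^2\lesssim\sum_z\|\phi_z r_P-v_z\|_{\tilde{H}^{-t}(\omega(z))}^2$; each summand is then controlled by $h_z^{2(r+t)}\|\phi_z r_P\|_{H^r(\omega(z))}^2$ via a standard direct estimate, valid since $-t\leq r$.

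The main obstacle, appearing in every $H^r$-norm of $\phi_z r_P$ encountered in Part 3, will be the uniform-in-$z$ multiplier bound $\|\phi_z r_P\|_{H^r(\omega(z))}\lesssim|r_P|_{r,\omega(z)}$: the hat $\phi_z$ has $|\phi_z|_{C^{0,r}}\sim h_z^{-r}$, so a naive multiplier estimate would pollute the scaling. I would handle this by splitting the Slobodeckij integrand as $\phi_z(x)(r_P(x)-r_P(y))+r_P(y)(\phi_z(x)-\phi_z(y))$. The first piece contributes at most $|r_P|_{r,\omega(z)}^2$ since $\|\phi_z\|_\infty\leq1$, while the second reduces, after integrating the kernel $|\phi_z(x)-\phi_z(y)|^2|x-y|^{-n-2r}$, to a multiple of $h_z^{-2r}\|r_P\|_{L^2(\omega(z))}^2$. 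This last factor is absorbed exactly by the element-wise Poincar\'e bound $\|r_P\|_{L^2(\omega(z))}^2\lesssim h_z^{2r}|r_P|_{r,\omega(z)}^2$; this is where the orthogonality $r_P\perp_{L^2}S_P^0$ enters, giving zero mean of $r_P$ on each constituent $P$-element, and mirroring its use in the discrete lower bounds of Lemma \ref{l:res-neg}.
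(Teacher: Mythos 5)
Your overall strategy coincides with the paper's: all three bounds are realized by explicit (quasi-)interpolants, reduced to direct estimates, with the orthogonality $r_P\perp_{L^2}S_P^0$ supplying the extra factor of $h^{r}$. Parts 1 and 2 are essentially the paper's argument (the paper dispatches \eqref{e:w-osc} as ``a standard direct estimate'' and proves \eqref{e:up-osc} by exactly your \eqref{e:direct-quasi}-on-stars route). For \eqref{e:low-osc} you take a genuinely different, and in one respect cleaner, path: the paper sets $v=\sum_{z\in N_{P'}\cap\omega^*}\langle r_P,1\rangle_{\omega'(z)}\phi_z$ (a fine-mesh Cl\'ement interpolant of $r_P$ built from local mean values), so that $\phi\,r_P-v$ decomposes into terms $(r_P-\langle r_P,1\rangle_{\omega'(z)})\phi_z$ handled by the multiplier bound $\|(f-\langle f,1\rangle)\phi_z\|_{H^{s}}\lesssim|f|_{H^{s}(\omega(z))}$ cited from \cite{CMS01}; you instead quasi-interpolate each $\phi_z r_P$ on $P'$ and prove the needed multiplier bound $\|\phi_z r_P\|_{H^r(\omega(z))}\lesssim|r_P|_{r,\omega(z)}$ from scratch by splitting the Slobodeckij integrand and absorbing the $h_z^{-2r}\|r_P\|^2_{\omega(z)}$ term with the zero-mean Poincar\'e inequality. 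That self-contained derivation is exactly the content of the \cite{CMS01} lemma the paper invokes, so your version buys independence from that reference at the cost of a page of computation; your decomposition $\phi\,r_P-v=\sum_z(\phi_z r_P-v_z)$ also holds exactly by the definition of $\phi$, avoiding the coarse-versus-fine partition-of-unity mismatch lurking in the paper's identity.

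One step needs repair. In Part 2 you regroup $\sum_{\sigma\in\hat P}h_\sigma^{2(r+t)}|r_P|^2_{r,\omega_{\hat P}(\sigma)}$ into a sum over coarse vertices by asserting $\omega_{\hat P}(\sigma)\subseteq\omega_P(z)$ for a vertex $z$ of the parent of $\sigma$. This containment is false: the fine star of $\sigma$ reaches all coarse triangles touching the parent $\tau$, i.e.\ the union of the stars of \emph{all} vertices of $\tau$, and the Slobodeckij seminorm over such a union is not bounded by the sum of the seminorms over the individual stars because of the cross-interaction terms between elements that share no common vertex. The same issue recurs when you collapse the double stars $\omega(\omega(z))$ produced by \eqref{e:direct-quasi}. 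The paper closes this gap by invoking Faermann's localization \eqref{e:slobodeckij-faermann}, which is where $r_P\perp_{L^2}S_P^0$ enters Part 2: the cross terms $\iint_{\tau'\times\tau''}$ between non-adjacent elements are controlled by $h^{-2r}(\|r_P\|^2_{\tau'}+\|r_P\|^2_{\tau''})$ and then by the element-wise zero-mean Poincar\'e bound --- precisely the trick you already use in Part 3. So the missing ingredient is available to you, but as written the regrouping step does not go through by ``local finiteness'' alone.
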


\begin{proof}
The estimate \eqref{e:w-osc} is a standard direct
estimate, and \eqref{e:up-osc} follows from
\begin{equation}
  \sum_{z\in N_P\cap\gamma} \|r_{P}-v\|_{-t,\omega(z)}^2
  \lesssim
  \sum_{z\in N_P\cap\gamma} h_z^{2(r+t)} |r_P|_{r,\omega(\omega((z))}^2
  \lesssim
  \sum_{z\in N_P\cap\overline{\gamma}} h_z^{2(r+t)} |r_P|_{r,\omega(z)}^2,
\end{equation}
where we have used \eqref{e:direct-quasi}, \eqref{e:slobodeckij-faermann}, and the local finiteness of
the mesh.

For \eqref{e:low-osc}, let $v\in S^1_{P'}$ be defined by
\begin{equation}
 v = \sum_{z\in N_{P'}\cap\,\omega^*} \langle r_P,1\rangle_{\omega'(z)}\phi_z,
\end{equation}
where $\phi_z\in S^1_{P'}$ is the standard nodal basis function at $z\in N_{P'}$,
and $\langle \cdot,\cdot\rangle_{\omega'(z)}$ is the $L^2$-inner product on the star $\omega'(z)$ around $z$ with respect to $P'$.
Then we have
\begin{equation}
  \|h_{P'}^{t}(r_{P}-v)\|_{\Gamma^*}^2 
  \lesssim
  \sum_{z\in N_{P'}\cap\,\omega^*} |r_P|_{-t,\omega'(z)}^2
  \lesssim
  \sum_{z\in N_{P}\cap\,\omega^*} |r_P|_{-t,\omega(z)}^2,
\end{equation}
and from $r_P\perp_{L^2}S_P^0$ we infer the bound \eqref{e:low-osc} for its first term.
For the second term, we have
\begin{equation}
 \phi\,r_{P} - v = \sum_{z\in N_{P'}\cap\,\omega^*} \left(r_P-\langle r_P,1\rangle_{\omega'(z)}\right)\phi_z,
\end{equation}
and hence
\begin{equation}
 \|\phi\,r_{P} - v\|_{H^{-t}(\Gamma)}^2 
 \lesssim 
 \sum_{z\in N_{P'}\cap\,\omega^*} \left\|\left(r_P-\langle r_P,1\rangle_{\omega'(z)}\right)\phi_z\right\|_{H^{-t}(\omega'(z))}^2.
\end{equation}
As in the proof of Lemma \ref{l:res-neg},
now by using the boundedness of $f\mapsto\left(f-\langle f,1\rangle_{\omega'(z)}\right)\phi_z$ in $H^{-t}(\omega'(z))$,
and then by employing the orthogonality $r_P\perp_{L^2}S_P^0$ again, we establish the proof.
\end{proof}

\begin{remark}[Saturation assumption]
Similarly to Remark \ref{r:pos-sat}, one can also derive some results on the saturation assumption for the non-residual estimators from \citet*{EFLFP09}.
We skip the details.
\end{remark}

Combining \eqref{e:local-discrete-up-faermann} and \eqref{e:low-osc},
for the Faermann indicators we get
\begin{equation}
  \len u_{P}-u_{P'} \ren^2
  \lesssim
  \sum_{z\in N_P\cap\,\omega^*} |r_{P}|_{-t,\omega(z)}^2 + \sum_{z\in N_P\cap\,\omega^*} h_z^{2(r+t)} |r_P|_{r,\omega(z)}^2,
\end{equation}
and by \eqref{e:local-discrete-low-faermann}, the entire right hand side of the preceding inequality is controlled by the last term alone.
This leads to generalized weighted residual indicators,
for which we already have the global bounds \eqref{e:global-neg}.
Obviously, the case $r=1$, treated in \cite{FKMP11,FKMP11a} is computationally more attractive,
but the remaining cases $r\in(-t,1)$ become important if for instance $f\in H^r(\Gamma)\setminus H^1(\Gamma)$.

In the following, we \emph{fix} some $r\in(-t,\frac32)\cap(0,\frac12-2t)\cap[0,\nu]$ and assume that $f\in H^r(\Gamma)$,
and that $A:\tilde{H}^{r+2t}(\Gamma)\to H^{r}(\Gamma)$ is bounded.
Then defining the \emph{error estimator}
\begin{equation}
\eta(v,P,\gamma) = \left(\sum_{z\in N_P\cap\,\overline{\gamma}} h_z^{2(r+t)}|f-Av|_{r,\omega(z)}^2\right)^{\frac12},
\end{equation}
for $\gamma\subseteq\Gamma$ and $v\in S_P^0$, with $\eta(P,\gamma)=\eta(u_P,P,\gamma)$,
in the context of Lemma \ref{l:res-neg} we have
\begin{equation}\label{e:local-discrete-est-neg}
  \len u_{P}-u_{P'}\ren^2\leq \beta_1 \eta(P,\Gamma^*)^2,
\end{equation}
where $\beta_1>0$ is a constant we will refer to later.
Let there be a map $z\mapsto\tau_P(z):N_P\to P$ for any $P\in\adm(P_0)$, satisfying $z\in\overline{\tau_P(z)}$.
For $v\in S_P^0$, let
\begin{equation}
\osc_r(v,P) = \left( \|h_{P}^{t}(f-Av-w)\|^2 + \sum_{z\in N_{\hat P}} h_z^{2(r+t)} |f-Av-w|_{r,\hat\omega(z)}^2 \right)^{\frac12},
\end{equation}
where $\hat\omega(z)$ is the star around $z\in N_{\hat P}$ with respect to the partition $\hat P$,
and $w=Q_{\hat P}(f-Av)\in S^1_{\hat P}$ is the quasi-interpolator of $f-Av$, defined by
\begin{equation}
 Q_{\hat P}g = \sum_{z\in N_{\hat P}} g_z(z)\phi_z,
\end{equation}
$\phi_z\in S^1_{\hat P}$ is the standard nodal basis function at $z$,
and $g_z\in\mathbb{P}_1$ is the $L^2$-orthogonal projection of $g$ onto the affine functions on $\tau_{\hat P}(z)$.
This quasi-interpolator, as a variation on the Cl\'ement interpolator, is introduced in \cite{Osw94}.
We remark that we will only need the idempotence $Q_{\hat P}^2=Q_{\hat P}$,
so for example the Scott-Zhang operator could have been employed instead.
Let $g'=g-Q_{\hat P}g$ with $g\in H^r(\Gamma)$, and let $y\in N_P$.
Then by idempotence, we have
\begin{equation}
|g'|_{r,\omega(y)}^2
=
|g'-Q_{\hat P}g'|_{r,\omega(y)}^2
\lesssim
\sum_{z\in N_{\hat P}\cap\,\overline{\omega(z)}} |(g'-g'_z(z))\phi_z|_{r,\hat\omega(z)}^2,
\end{equation}
which implies by the boundedness of $g'\mapsto (g'-g'_z(z))\phi_z$ in $H^r(\hat\omega(z))$, that
\begin{equation}
\sum_{y\in N_{P}} |g-Q_{\hat P}g|_{r,\omega(y)}^2
\lesssim
\sum_{z\in N_{\hat P}} |g-Q_{\hat P}g|_{r,\hat\omega(z)}^2.
\end{equation}
In particular, in the context of Lemma \ref{l:res-neg} we have
\begin{equation}\label{e:global-est-neg}
  \len u-u_{P}\ren^2
  \lesssim 
  \alpha_1 \eta(P,\Gamma)^2
  \leq
  \len u-u_{P}\ren^2 + \osc_r(u_P,P)^2,
\end{equation}
where $\alpha_1>0$ is a constant.
On the other hand, by stability of $Q_{\hat P}$ there exists a constant $C_Q>0$ such that 
\begin{equation}\label{e:est-dom-osc-neg}
\|h_{P}^{t}(r_P-Q_{\hat P}r_P)\|_{\hat\omega(\gamma)}^2 + \sum_{z\in N_{\hat P}\cap\,\overline{\hat\omega(\gamma)}} h_z^{2(r+t)} |r_P-Q_{\hat P}r_P|_{r,\hat\omega(z)}^2
\leq
C_Q\eta(P,\gamma)^2,
\end{equation}
for any $P\in\adm(P_0)$ and $\gamma\subseteq\Gamma$, with $\hat\omega(\gamma)=\bigcup\{\hat\omega(z):z\in N_{\hat P}\cap\overline{\gamma}\}$.

In the rest of this section, 
we follow the pattern of the preceding two sections,
and skip the proofs that closely resemble those
given in the previous sections.

\begin{lemma}\label{l:osc-red-neg}
Assume that
\begin{equation}\label{e:inverse-ineq-neg}
\sum_{z\in N_P}h_z^{2(r+t)}|Av|_{r,\omega(z)}^2
\leq
C_A \len v \ren^2,
\qquad
v\in S_{P}^0,
\end{equation}
for $P\in\adm(P_0)$, with the constant $C_A=C_A(A,\shape,\grade)$.
Then the followings hold.
\begin{enumerate}[a)]
\item
There is a constant $C_G>0$ such that
\begin{equation}\label{e:gal-opt-neg}
\len u-u_P \ren^2 + \osc_r(u_P,P)^2 \leq C_G \inf_{v\in S_P^0} \left( \len u-v \ren^2 + \osc_r(v,P)^2 \right),
\end{equation}
for any $P\in\adm(P_0)$.
\item
There exists a constant $\lambda>0$ such that
\begin{equation}\label{e:osc-red-neg}
\eta(P',\Gamma)^2 
\leq 
(1+\delta)\eta(P,\Gamma)^2 
- \lambda(1+\delta) \eta(P,\Gamma^*)^2 
+ C_\delta \len u_P-u_{P'} \ren^2,
\end{equation}
for any $P,P'\in\adm(P_0)$ with $P\preceq P'$,
and for any $\delta>0$, with $C_\delta$ depending on $\delta$,
where $\Gamma^*=\intr\bigcup_{\tau\in P\setminus P'}\overline\tau$.
\item
Let $P,P'\in\adm(P_0)$ be such that $P\preceq P'$,
and let
$\Gamma^*=\intr\bigcup_{\tau\in P\setminus P'}\overline\tau$.
If, for some $\mu\in(0,\frac12)$ it holds that
\begin{equation}\label{e:err-red-neg-l}
\len u-u_{P'}\ren^2+\osc_r(u_{P'},P')^2 \leq\mu\left(\len u-u_{P}\ren^2+\osc_r(u_{P},P)^2\right),
\end{equation}
then with $\theta=\frac{\alpha_1(1-2\mu)}{C_Q+\beta_1(1+2C_AC_Q)}$, we have
\begin{equation}\label{e:dorfler-neg-l}
\eta(P,\Gamma^*)^2 \geq \theta\, \eta(P,\Gamma)^2,
\end{equation}
where $\alpha_1>0$ and $\beta_1>0$ are the constant from \eqref{e:global-est-neg} and \eqref{e:local-discrete-est-neg},
respectively.
\end{enumerate}
\end{lemma}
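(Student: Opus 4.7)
The plan is to follow the three-step scheme used in Lemma \ref{l:osc-red-zero} and Lemma \ref{l:osc-red-pos}, with the zero- and positive-order inverse inequalities replaced by \eqref{e:inverse-ineq-neg}, and with the oscillation-absorption bound \eqref{e:est-dom-osc-neg} serving as the substitute for the Cl\'ement-type stability estimate used in the proof of Lemma \ref{l:osc-red-pos}. In every place where the earlier proofs used an inverse inequality to control $\|h_P^tA(\cdot)\|$ by the energy norm, I would instead control the mixed $L^2$-plus-Slobodeckij quantity that appears in $\eta$, which is exactly what \eqref{e:inverse-ineq-neg} delivers.

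For part (a), I would pick $w\in S_P^0$ realizing the infimum in \eqref{e:gal-opt-neg} up to a factor of two, write $r_P=(f-Aw)+A(w-u_P)$, and use the linearity of $Q_{\hat P}$ inside the definition of $\osc_r$. The triangle inequality followed by the $Q_{\hat P}$-stability underlying \eqref{e:est-dom-osc-neg}, applied to $A(w-u_P)$, together with the inverse inequality \eqref{e:inverse-ineq-neg} on $w-u_P\in S_P^0$, then produces
\begin{equation*}
\osc_r(u_P,P)^2 \le 2\,\osc_r(w,P)^2 + 2C_QC_A\,\len w-u_P\ren^2.
\end{equation*}
The Galerkin orthogonality $\len u-u_P\ren^2+\len w-u_P\ren^2=\len u-w\ren^2$ then collapses the two energy terms and gives \eqref{e:gal-opt-neg} with $C_G$ of order $1+C_QC_A$.

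For part (b), the identity $r_{P'}=r_P-A(u_{P'}-u_P)$ combined with Young's inequality gives
\begin{equation*}
\eta(P',\Gamma)^2 \le (1+\delta)\sum_{z\in N_{P'}}h_z^{2(r+t)}|r_P|_{r,\omega'(z)}^2 + C_\delta C_A\,\len u_P-u_{P'}\ren^2,
\end{equation*}
after applying \eqref{e:inverse-ineq-neg} to $u_{P'}-u_P\in S_{P'}^0$. I would then split $N_{P'}$ into vertices whose $P'$-stars lie in $\Gamma\setminus\Gamma^*$, where $h_z$ and $\omega'(z)$ coincide with their $P$-counterparts, and the remaining vertices, where $h_z$ contracts by a uniform factor $\lambda_0<1$ per bisection; this produces the $-\lambda(1+\delta)\eta(P,\Gamma^*)^2$ gain. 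Part (c) would then proceed as in Lemma \ref{l:osc-red-pos} c): from \eqref{e:err-red-neg-l} and Galerkin orthogonality one reduces to showing
\begin{equation*}
\osc_r(u_P,P)^2 - 2\,\osc_r(u_{P'},P')^2 \le C_Q\,\eta(P,\Gamma^*)^2 + 2C_QC_A\,\len u_P-u_{P'}\ren^2.
\end{equation*}
Outside a buffer of $\Gamma^*$, $Q_{\hat P}$ and $Q_{\hat P'}$ coincide and $h_P=h_{P'}$, so substituting $r_P=r_{P'}+A(u_P-u_{P'})$ and invoking \eqref{e:inverse-ineq-neg} absorbs that part into $2\,\osc_r(u_{P'},P')^2+2C_QC_A\len u_P-u_{P'}\ren^2$; inside the buffer \eqref{e:est-dom-osc-neg} bounds everything by $C_Q\eta(P,\Gamma^*)^2$. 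Combining this with the global lower bound in \eqref{e:global-est-neg} and the local discrete upper bound \eqref{e:local-discrete-est-neg} delivers \eqref{e:dorfler-neg-l} with the announced $\theta$.

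The main obstacle will be the bookkeeping in (c): the quasi-interpolator $Q_{\hat P}$ is only quasi-local through the assignment $z\mapsto\tau_{\hat P}(z)$, so in principle refining $P$ can alter $Q_{\hat P}v|_\tau$ at elements $\tau$ whose immediate vertices are untouched. The remedy, as in Lemma \ref{l:osc-red-pos} c), is to enlarge the buffer around $\Gamma^*$ to a layer of $\hat\omega(z)$-stars and to exploit the idempotence $Q_{\hat P}^2=Q_{\hat P}$ exactly as in the argument displayed just before \eqref{e:global-est-neg}, so that everything outside the buffer is genuinely unaffected while everything inside is absorbed into $C_Q\eta(P,\Gamma^*)^2$ via \eqref{e:est-dom-osc-neg}.
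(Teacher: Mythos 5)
Parts a) and c) of your proposal follow the same route the paper takes: the paper itself only sketches these by reference to Lemma \ref{l:osc-red-pos}, and your expansion (triangle inequality plus the $Q_{\hat P}$-stability \eqref{e:est-dom-osc-neg} plus the inverse inequality \eqref{e:inverse-ineq-neg}, Galerkin orthogonality for a), the buffer/idempotence bookkeeping for c)) is exactly what is intended. The reduction you perform at the start of part b) (Young's inequality, \eqref{e:inverse-ineq-neg} applied to $u_{P'}-u_P$, and splitting off the vertices untouched by the refinement) also matches the paper.

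The gap is in the last clause of your part b). After the split, what remains to be shown is the \emph{strict} reduction
\begin{equation*}
\sum_{z\in N_{P'}\cap\,\overline{\Gamma^*}} h_z^{2(r+t)}|g|_{r,\omega'(z)}^2
\;\leq\;
\lambda'
\sum_{y\in N_P\cap\,\overline{\Gamma^*}} h_y^{2(r+t)}|g|_{r,\omega(y)}^2,
\qquad \lambda'<1,
\end{equation*}
and strictness is essential, since $\lambda=1-\lambda'$ is precisely the gain that drives the contraction later. Your justification --- ``$h_z$ contracts by a uniform factor $\lambda_0<1$ per bisection'' --- does not establish this. First, it is false as stated: for an old vertex $z\in N_P\cap\partial\Gamma^*$ whose minimal adjacent element is \emph{unrefined}, $h_z=\min_{\tau\ni z}h_\tau$ is unchanged, and its star $\omega'(z)=\omega(z)$ as a set, so that vertex contributes identical terms to both sides with no gain. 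Second, even where $h_z$ does contract, the stars overlap and several new stars $\omega'(z)$ sit inside a single old star $\omega(y)$; a naive superadditivity bound $|g|^2_{r,\omega'(z)}\le|g|^2_{r,\omega(y)}$ then yields $\lambda'\le M\lambda_0^{2(r+t)}$ with $M$ the multiplicity from local finiteness, which need not be below $1$. This is exactly where the element-based arguments of Lemmas \ref{l:osc-red-zero} b) and \ref{l:osc-red-pos} b) (disjoint sums, trivial one-to-one matching of terms) do not transfer to the star-based estimator. The paper's proof instead decomposes each Slobodeckij seminorm into pairwise interaction terms $I(\tau,\sigma)$ of the double integral and matches, for each (un)ordered pair, its occurrences on the left with at least as many occurrences on the right, using the shared $k$-faces of the parent triangles to count the vertices $y$ with $\tau,\sigma\subset\omega(y)$, and tracking the weight factors separately for diagonal and off-diagonal terms. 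You would need to supply an argument of this kind (or an alternative) to close part b).
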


\begin{remark}
The estimate \eqref{e:inverse-ineq-neg} is proved in Section
\ref{s:inverse} for a general class of singular integral operators of
negative order, under the hypothesis
that $\hat{A}:H^{t+\sigma}(\Omega)\to H^{-t+\sigma}(\Omega)$ is bounded for some
$\sigma>\frac{n}2$, and that $0<r+t<\sigma$, where $\hat{A}$
is an extension of $A$ to $\Omega$ if $\Gamma$ is open, and $\hat{A}=A$ if $\Gamma=\Omega$.
Recalling that $\Omega$ is a $C^{\mns-1,1}$-manifold,
the condition on $\sigma$ translates to $\mns>\frac{n}2-t$, see Remark
\ref{r:bdd-bdry-int} for details.
Therefore for $n=1$ and $n=2$ we need a $C^{1,1}$
curve or a surface, respectively, assuming that $\frac12\leq-t<1$.
Note that if $\Gamma$ is open, the boundary of $\Gamma$ is allowed to be Lipschitz.
\end{remark}

\begin{proof}[Proof of Lemma \ref{l:osc-red-neg}]
The proofs of a) and c) go along the same lines as the corresponding proofs from the previous section, cf. Lemma \ref{l:osc-red-pos}.
In particular, the stability \eqref{e:est-dom-osc-neg} and the locality of $Q_{\hat P}$ are important. 

Claim b) is established if we show that
\begin{equation}\label{e:osc-red-neg-1}
\sum_{z\in N_{P'}\cap\,\overline{\Gamma^*}} h_z^{2(r+t)}|g|_{r,\omega'(z)}^2
\leq
\lambda
\sum_{y\in N_P\cap\,\overline{\Gamma^*}} h_y^{2(r+t)}|g|_{r,\omega(y)}^2,
\qquad
g\in H^r(\Gamma),
\end{equation}
with some constant $\lambda<1$,
where $\omega'(z)$ is the star around $z\in N_{P'}$ with respect to the partition $P'$.
For $\tau,\sigma\in P'$, let $I(\tau,\sigma)$ denote the interaction term between $\tau$ and $\sigma$
in the Slobodeckij (double integral) norm of $g$.
First of all, any diagonal term $I(\tau,\tau)$ that appears in the left hand side also appears in the right hand side,
and the corresponding factors satisfy $h_z\leq\lambda_0 h_y$ with some constant $\lambda_0<1$.
Henceforth we concentrate on the off-diagonal terms.
Note that by symmetry the order of $\tau$ and $\sigma$ is not important,
and that the number of occurrences of the particular (unordered) pair $(\tau,\sigma)$ in the left hand side of \eqref{e:osc-red-neg-1}
is equal to the number of $z\in N_{P'}\cap\,\overline{\Gamma^*}$ satisfying $\tau,\sigma\subset\omega'(z)$.
Suppose that the pair $(\tau,\sigma)$ appears in the left hand side exactly $\ell$ times, for $\ell\in[0,n-1]$.
Thus $\tau$ and $\sigma$ are contained in two triangles from $P$ that share a $k$-face for some $k\in[\ell-1,n]$,
where $k=n$ means that the two triangles coincide.
If this face is in $\overline{\Gamma^*}$, then the vertices of this face give at least $\ell$ points 
$y\in N_P\cap\,\overline{\Gamma^*}$ such that $\tau,\sigma\subset\omega(y)$,
meaning that the same pair appears in the right hand side at least $\ell$ times.
On the other hand, if the shared face is not in $\overline{\Gamma^*}$, 
this would mean that $\tau$ and $\sigma$ are triangles from $P$,
and they interact through only the vertices on $N_P\cap\partial\Gamma^*$.
We also see that the corresponding factors $h_z^{2(r+t)}$ shrink since $h_z$ is defined by taking minimum as $h_z=\min_{\{\tau\in P':z\in\overline{\tau}\}}h_\tau$,
proving the claim \eqref{e:osc-red-neg-1}.
\end{proof}

\begin{proposition}
Let the assumption \eqref{e:inverse-ineq-neg} of Lemma \ref{l:osc-red-neg} hold.
Let $P,P'\in\adm(P_0)$ be admissible partitions with $P\preceq P'$,
and let
$\Gamma^*=\intr\bigcup_{\tau\in P\setminus P'}\overline\tau$. 
Suppose, for some $\vartheta\in(0,1)$ that
\begin{equation}\label{e:dorfler-neg}
\eta(P,\Gamma^*) \geq \vartheta\,\eta(P,\Gamma).
\end{equation}
Then there exist constants $\gamma\geq0$ and $\mu\in(0,1)$ such that
\begin{equation}
\len u-u_{P'}\ren^2+\gamma\,\eta(P',\Gamma)^2 \leq\mu\left(\len u-u_{P}\ren^2+\gamma\,\eta(P,\Gamma)^2\right).
\end{equation}
\end{proposition}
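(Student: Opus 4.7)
The plan is to mirror the argument given for Proposition \ref{p:cont-pos}, because the negative-order ingredients assembled in this section are structurally parallel to the positive-order ones: Lemma \ref{l:osc-red-neg} b) supplies a quasi-monotonicity of $\eta(\cdot,\Gamma)$ under refinement with a controlled perturbation in $\len u_P - u_{P'}\ren^2$, the inequality \eqref{e:global-est-neg} gives the global upper bound $\len u - u_P\ren^2 \lesssim \eta(P,\Gamma)^2$, and the Galerkin orthogonality in the energy norm is at our disposal.

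First I would combine Galerkin orthogonality $\len u-u_{P'}\ren^2 = \len u-u_P\ren^2 - \len u_P - u_{P'}\ren^2$ with $\gamma$ times the estimator perturbation estimate \eqref{e:osc-red-neg}, for a weight $\gamma > 0$ and a free parameter $\delta > 0$ that will be chosen later. This yields
\begin{equation*}
\len u-u_{P'}\ren^2 + \gamma\,\eta(P',\Gamma)^2 \leq \len u-u_P\ren^2 + \gamma(1+\delta)\eta(P,\Gamma)^2 - \gamma\lambda(1+\delta)\eta(P,\Gamma^*)^2 + (\gamma C_\delta - 1)\len u_P - u_{P'}\ren^2.
\end{equation*}
Next, to absorb the $\len u_P - u_{P'}\ren^2$ contribution, I would restrict the weight to $\gamma \leq 1/C_\delta$, making that last term nonpositive. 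The remaining estimate must then be converted into a convex combination of $\len u - u_P\ren^2$ and $\eta(P,\Gamma)^2$ with both contraction factors strictly below 1.

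To accomplish this, I would introduce two auxiliary parameters $a,b > 0$ and apply the standard two-step splitting from \cite{CKNS08}: use \eqref{e:global-est-neg} to write $\len u - u_P\ren^2 = (1 - \gamma a)\len u - u_P\ren^2 + \gamma a \len u - u_P\ren^2$ and bound the second summand by a multiple of $\eta(P,\Gamma)^2$; then apply the Dörfler property $\eta(P,\Gamma^*)^2 \geq \vartheta^2 \eta(P,\Gamma)^2$ to transfer a fraction $b$ of $\eta(P,\Gamma)^2$ into a term of the form $(b/\vartheta^2)\eta(P,\Gamma^*)^2$, which is then absorbed by $-\gamma\lambda(1+\delta)\eta(P,\Gamma^*)^2$. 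Choosing $b = \lambda\vartheta^2$ makes the coefficient of $\eta(P,\Gamma^*)^2$ vanish in the limit $\delta \to 0$, and then choosing $a$ and $\delta$ sufficiently small makes both the coefficient of $\len u-u_P\ren^2$ (namely $1 - \gamma a$) and the coefficient of $\gamma\,\eta(P,\Gamma)^2$ (namely $(1+\delta) + (C_{\mathrm{ub}} a - b)$, where $C_{\mathrm{ub}}$ is the constant in the global upper bound) strictly smaller than $1$. Taking $\mu$ as the maximum of these two factors completes the proof.

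Since Lemma \ref{l:osc-red-neg} already does all the heavy lifting specific to negative order (encoding the Slobodeckij-over-stars structure of $\eta$ and the role of the quasi-interpolant $Q_{\hat P}$), I expect the only subtle point to be ensuring that $\gamma \leq 1/C_\delta$ is compatible with the later choice of small $\delta$; this is resolved by treating $\gamma$ as a function of $\delta$ (e.g., $\gamma = 1/C_\delta$) and verifying that both contraction coefficients remain below $1$ for all $\delta$ sufficiently small, just as in the proof of Proposition \ref{p:cont-pos}. Beyond this bookkeeping, no new ideas are needed.
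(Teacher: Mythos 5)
Your proposal is correct and coincides with the paper's intended argument: the paper omits the proof of this proposition precisely because it is the proof of Proposition \ref{p:cont-pos} repeated verbatim with $\|h_P^t r_P\|$ replaced by $\eta(P,\cdot)$, using Lemma \ref{l:osc-red-neg} b), the global upper bound \eqref{e:global-est-neg}, Galerkin orthogonality, and the D\"orfler property, with the same choices $b=\lambda\vartheta^2$, $\gamma\leq 1/C_\delta$, and $a,\delta$ small. Your handling of the $\gamma$--$\delta$ compatibility is exactly the bookkeeping the paper relies on.
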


Gearing towards a convergence rate analysis,
for $u\in\tilde{H}^t(\Gamma)$ the solution of $Au=f$ with $f\in H^r(\Gamma)$,
and for $P\in\adm(P_0)$,
we define
\begin{equation}\label{e:dist-r-neg}
\dist_r(u,S_P^0) = \inf_{v\in S_P^0} \left( \len u-v \ren^2+\osc_r(v,P)^2 \right)^{\frac12}.
\end{equation}
Furthermore, for $\eps>0$ we define 
\begin{equation}
\card_r(u,\eps) = \min\{\#P-\#P_0:P\in\adm(P_0),\,\dist_r(u,S_P^0)\leq\eps\}.
\end{equation}
We have the following result on the D\"orfler marking,
whose proof is entirely analogous to the proof of Proposition \ref{p:card-pos}.

\begin{proposition}\label{p:card-neg}
Let the assumption \eqref{e:inverse-ineq-neg} of Lemma \ref{l:osc-red-neg} hold.
Let $P\in\adm(P_0)$,
and let $\theta\in(0,\theta^*)$ with
$\theta^*=\frac{\alpha_1}{1+\beta_1(1+2C_A)}$.
Suppose that $R\subseteq P$ is a subset whose cardinality is minimal up to a constant factor,
among all $R\subseteq P$ satisfying
\begin{equation}
\eta(P,\Gamma^*(R))^2 \geq \theta\,\eta(P,\Gamma)^2.
\end{equation}
where $\Gamma^*(R)=\intr\bigcup_{\tau\in R}\overline\tau$.
Then we have
\begin{equation}
\#R\lesssim\card_r(u,\eps),
\end{equation}
where $\eps$ is defined by
\begin{equation}\label{e:def-eps-neg}
\eps^2 = \frac{\theta^*-{\theta}}{2C_G \theta^*} \left( \len u-u_{P}\ren^2+\osc_r(P,\Gamma)^2 \right).
\end{equation}
\end{proposition}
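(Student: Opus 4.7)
The plan is to translate the proof of Proposition \ref{p:card-zero} to the present negative-order framework; all the structural ingredients, namely quasi-optimality (Lemma \ref{l:osc-red-neg} a)), the implication ``total error contracts $\Rightarrow$ refined region is Dörfler-admissible'' (Lemma \ref{l:osc-red-neg} c)), and the overlay cardinality bound \eqref{e:overlay}, are already available in this setting. The proof amounts to chaining these three facts.

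First I would abbreviate $e(Q):=\len u-u_{Q}\ren^2+\osc_r(u_Q,Q)^2$ for $Q\in\adm(P_0)$, and select a near-optimal admissible partition $P_\eps\in\adm(P_0)$ with $\#P_\eps-\#P_0\leq\card_r(u,\eps)$ and $\dist_r(u,S_{P_\eps}^0)\leq\eps$. Forming the overlay $\tilde P:=P\oplus P_\eps$, the axioms \eqref{e:overlay} yield $\tilde P\in\adm(P_0)$, that $\tilde P$ refines both $P$ and $P_\eps$, and that $\#\tilde P-\#P\leq\#P_\eps-\#P_0$.

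The pivotal step is to prove $e(\tilde P)\leq\mu\,e(P)$ with $\mu:=(\theta^*-\theta)/(2\theta^*)\in(0,\tfrac12)$. Applying Lemma \ref{l:osc-red-neg} a) on $\tilde P$ and restricting the infimum to $S_{P_\eps}^0\subseteq S_{\tilde P}^0$, one gets
\begin{equation}
 e(\tilde P)\leq C_G\inf_{v\in S_{P_\eps}^0}\bigl(\len u-v\ren^2+\osc_r(v,\tilde P)^2\bigr).
\end{equation}
The monotonicity $\osc_r(v,\tilde P)\leq\osc_r(v,P_\eps)$ for $v\in S_{P_\eps}^0$ (stemming from the decrease of $h_P$ under refinement, subadditivity of Slobodeckij seminorms, and the stability together with idempotence of $Q_{\hat P}$) then gives $e(\tilde P)\leq C_G\,\dist_r(u,S_{P_\eps}^0)^2\leq C_G\eps^2=\mu\,e(P)$ by the definition \eqref{e:def-eps-neg} of $\eps$. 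With this contraction in hand, Lemma \ref{l:osc-red-neg} c) applied to the pair $P\preceq\tilde P$ yields the Dörfler inequality $\eta(P,\Gamma^*(P\setminus\tilde P))^2\geq\theta\,\eta(P,\Gamma)^2$.

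Finally, by the minimality of $\#R$ (up to a constant factor) among subsets of $P$ realizing the Dörfler inequality at parameter $\theta$, and by the overlay cardinality bound, we obtain
\begin{equation}
 \#R\lesssim\#(P\setminus\tilde P)\leq\#\tilde P-\#P\leq\#P_\eps-\#P_0\leq\card_r(u,\eps),
\end{equation}
as desired. The only genuinely new point beyond the zero-order template is the oscillation monotonicity invoked above: because $\osc_r$ here involves the global quasi-interpolant $Q_{\hat P}$ on the uniform refinement, one must check that passing from $P_\eps$ to $\tilde P$ does not inflate $\osc_r(v,\cdot)$ for a fixed $v\in S_{P_\eps}^0$. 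I expect this to be the main (if modest) technical obstacle, to be dispatched using the locality and stability of $Q_{\hat P}$ together with the monotonicity of $h_P$ under refinement.
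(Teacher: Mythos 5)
Your argument is exactly the paper's: the author omits the proof of this proposition, stating that it is ``entirely analogous'' to Proposition \ref{p:card-pos}, which in turn is Proposition \ref{p:card-zero} \emph{mutatis mutandis} --- precisely the overlay / quasi-optimality / D\"orfler chain you reproduce. The technical point you flag is real but is handled essentially as you expect; note only that since $t<0$ the weight $h_P^{t}$ \emph{grows} under refinement, so the quasi-monotonicity $\osc_r(v,\tilde P)\lesssim\osc_r(v,P_\eps)$ cannot be obtained by termwise comparison of the first oscillation term, but rather by using idempotence to write $g-Q_{\hat{\tilde P}}g=(I-Q_{\hat{\tilde P}})(g-Q_{\hat P_\eps}g)$ and then the direct estimate for $Q_{\hat{\tilde P}}$ to convert $\|h_{\tilde P}^{t}(I-Q_{\hat{\tilde P}})g'\|$ into sums weighted by $h^{2(r+t)}$, whose exponent is positive because $r>-t$.
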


Now let us specify our adaptive algorithm.

\vspace{2mm}
\begin{algorithm}[H]
\SetKwInOut{Params}{parameters}
\SetKwInOut{Output}{output}
\SetKwFor{For}{for}{do}{endfor}
\Params{conforming partition $P_0$, and $\theta\in[0,1]$}
\Output{$P_k\in\adm(P_0)$ and $u_k\in S_{P_k}^0$ for all $k\in\N_0$}
\BlankLine
\For{$k=0,1,\ldots$}{
Compute $u_k\in S_{P_k}^0$ as the Galerkin approximation of $u$ from
$S_{P_k}^0$\;\nllabel{a:galsolve-neg}
Identify a minimal (up to a constant factor) set $R_k\subset P_k$ of triangles satisfying
\begin{equation}
\eta(P_k,\Gamma^*)^2 \geq \theta\,\eta(P_k,\Gamma)^2,
\end{equation}
where $r_k=f-Au_k$ and $\Gamma^*=\intr\bigcup_{\tau\in R_k}\overline\tau$\;\nllabel{a:mark-neg}
Set $P_{k+1}=\refine(P_k,R_k)$\;\nllabel{a:make-conf-neg}
}
\caption{Adaptive BEM ($t<0$)}\label{a:abem-neg}
\end{algorithm}
\vspace{2mm}

Finally, we introduce the \emph{approximation class} $\mathcal{A}_{r,s}\subset
A^{-1}(H^r(\Gamma))$ with $s\geq0$ to be the set of $u$ for which
\begin{equation}
|u|_{\mathcal{A}_{r,s}} = \sup_{\eps>0} \left( \card_r(u,\eps)^s\eps \right)<\infty,
\end{equation}
and record that our adaptive BEM produces optimally converging approximations.

\begin{theorem}\label{t:comp-neg}
Let the assumption \eqref{e:inverse-ineq-neg} of Lemma \ref{l:osc-red-neg} hold,
and in Algorithm \ref{a:abem-neg}, suppose that $\theta\in(0,\theta^*)$ with $\theta^*=\frac{\alpha_1}{1+\beta_1(1+2C_A)}$.
Let $f\in H^r(\Gamma)$ and $u\in\mathcal{A}_{r,s}$ for some $s>0$.
Then we have
\begin{equation}
\len u-u_k\ren^2+\osc_r(P_k,\Gamma)^2
\leq
C |u|_{\mathcal{A}_{r,s}}^2 (\#P_k-\#P_0)^{-2s},
\end{equation}
where $C>0$ is a constant.
\end{theorem}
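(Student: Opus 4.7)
The plan is to transcribe the argument sketched after Theorem~\ref{t:comp-zero} for the zero-order case, using the three ingredients that have just been assembled: the contraction proposition preceding Proposition~\ref{p:card-neg}, the D\"orfler cardinality estimate of Proposition~\ref{p:card-neg}, and the completion bound \eqref{e:complete}. Write $e(P) := \len u-u_P\ren^2 + \gamma\,\eta(P,\Gamma)^2$ for the quantity that contracts, so that the contraction proposition gives $e(P_{k+\ell}) \leq \mu^\ell e(P_k)$ for all $k,\ell\geq 0$ and some $\mu\in(0,1)$.

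The first substantive step is to apply Proposition~\ref{p:card-neg} at each iteration. Since $\theta<\theta^*$ and Algorithm~\ref{a:abem-neg} selects $R_k$ of essentially minimal cardinality among sets satisfying the D\"orfler condition \eqref{e:dorfler-neg}, one obtains $\#R_k \lesssim \card_r(u,\varepsilon_k)$ with $\varepsilon_k^2 \simeq \len u-u_{P_k}\ren^2 + \osc_r(u_{P_k},P_k)^2$. Invoking the hypothesis $u\in\mathcal{A}_{r,s}$ then gives $\card_r(u,\varepsilon_k)\leq\varepsilon_k^{-1/s}|u|_{\mathcal{A}_{r,s}}^{1/s}$. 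The second step is to express $\varepsilon_k$ in terms of $e(P_k)$: combining \eqref{e:global-est-neg} (which yields $\alpha_1\eta(P,\Gamma)^2\leq\len u-u_P\ren^2+\osc_r(u_P,P)^2$) with \eqref{e:est-dom-osc-neg} taken at $\gamma=\Gamma$ (which gives $\osc_r(u_P,P)^2\leq C_Q\eta(P,\Gamma)^2$) produces the two-sided equivalence $\varepsilon_k^2\simeq e(P_k)$, so that $\#R_k \lesssim |u|_{\mathcal{A}_{r,s}}^{1/s}\,e(P_k)^{-1/(2s)}$.

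The third step is to invoke the completion estimate \eqref{e:complete} and sum geometric series. The cumulative bound reads
\begin{equation}
\#P_k-\#P_0 \lesssim \sum_{m=0}^{k-1}\#R_m \lesssim |u|_{\mathcal{A}_{r,s}}^{1/s}\sum_{m=0}^{k-1} e(P_m)^{-1/(2s)}.
\end{equation}
Using the contraction $e(P_k)\leq C\mu^{k-m}e(P_m)$ (equivalently $e(P_m)^{-1/(2s)}\lesssim\mu^{(k-m)/(2s)}e(P_k)^{-1/(2s)}$) and summing the resulting geometric series in $k-m$, one arrives at $\#P_k-\#P_0 \lesssim |u|_{\mathcal{A}_{r,s}}^{1/s}\,e(P_k)^{-1/(2s)}$. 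Rearranging and invoking the equivalence $\len u-u_{P_k}\ren^2 + \osc_r(u_{P_k},P_k)^2 \lesssim e(P_k)$ from step~2 yields the claimed rate.

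There is no genuine obstacle, only some bookkeeping: the delicate point is that the functional that contracts uses $\eta(P,\Gamma)$ whereas the D\"orfler cardinality estimate and the definition of $\mathcal{A}_{r,s}$ are phrased in terms of $\osc_r(u_P,P)$. The two are reconciled by the bounds \eqref{e:global-est-neg} and \eqref{e:est-dom-osc-neg}, which together give a clean two-sided comparison up to constants depending only on $A$ and $\adm(P_0)$. Once this is noted, the proof is a verbatim adaptation of the argument presented for Theorem~\ref{t:comp-zero}.
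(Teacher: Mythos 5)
Your proof is correct and follows essentially the same route as the paper, which for the negative-order case simply defers to the argument spelled out before Theorem~\ref{t:comp-zero} (completion bound, D\"orfler cardinality estimate, contraction, geometric series). Your explicit reconciliation of the contracting quantity $\len u-u_P\ren^2+\gamma\,\eta(P,\Gamma)^2$ with the total error $\len u-u_P\ren^2+\osc_r(u_P,P)^2$ via \eqref{e:global-est-neg} and \eqref{e:est-dom-osc-neg} is exactly the bookkeeping the paper leaves implicit.
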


\section{Inverse-type inequalities}
\label{s:inverse}

In this section we shall justify the inverse-type inequality
\eqref{e:inverse-ineq-gen},
which has been used in Lemmata \ref{l:osc-red-zero},
\ref{l:osc-red-pos}, and \ref{l:osc-red-neg},
and hence played a crucial role in our analysis.
We allow a general class of singular integral operators,
specified by the assumptions that follow.

We keep the assumptions formulated in Section \ref{s:general} still in
force.
We will be concerned only with closed manifolds (i.e., $\Gamma=\Omega$), since the
case of open surfaces $\Gamma\subset\Omega$ would follow by
restriction.
In addition, we assume that $\Omega$ is embedded in some Euclidean
space $\R^{N}$, so that the Euclidean distance function
$\dist:\Omega\times\Omega\to[0,\infty)$ is well defined.
Instead of the operator $A$ that featured in the previous sections,
we will consider in this section a more general \emph{bounded linear} operator $T:H^t(\Omega)\to
H^{-t}(\Omega)$,
hence removing the self-adjointness and coerciveness assumptions.
With $\Delta=\{(x,x):x\in\Omega\}$ the diagonal of $\Omega\times\Omega$,
we assume that there is a kernel $K\in L^1_{\mathrm{loc}}(\Omega\times\Omega\setminus\Delta)$
associated to $T$, meaning that
\begin{equation}
\langle Tu,v\rangle = \langle K,u\otimes v\rangle,
\end{equation}
whenever $u,v\in C^{\nu-1,1}(\Omega)$ have disjoint supports.
We assume that $K$ is smooth on
\begin{equation}
\Sigma=\{\tau\times\tau':\tau,\tau'\in P_0\}\setminus\Delta\subset\Omega\times\Omega\setminus\Delta,
\end{equation}
satisfying the estimate
\begin{equation}\label{e:standard-kernel}
|\partial_\xi^\alpha\partial_\eta^\beta K(\xi,\eta)| \leq
\frac{C_{\alpha,\beta}}{\dist(\xi,\eta)^{n+2t+|\alpha|+|\beta|}},
\qquad
(\xi,\eta)\in\Sigma,
\end{equation}
for all multi-indices $\alpha$ and $\beta$ satisfying
$n+2t+|\alpha|+|\beta|>0$.
Note that the partial derivatives are understood in local
coordinates (or, as we discussed in \S\ref{s:general}, in terms of the reference triangles).
The kernels satisfying this smoothness condition have been called \emph{standard kernels},
e.g., in \citet*{DHS06}.
Then one can show that the kernels of a wide range of boundary
integral operators are standard kernels, cf. \cite{Schn98}.

\begin{theorem}\label{t:inverse-ineq}
With $T^*$ denoting the adjoint of $T$, 
let both $T,T^*:H^{t+\sigma}(\Omega)\to H^{-t+\sigma}(\Omega)$ be bounded
for some $\sigma>\frac{n}2$.
Moreover, assume $s\geq0$ and $0<s+t<\sigma$.
Let $S^d_{P}\subset H^t(\Omega)$.
Then we have
\begin{equation}
\sum_{\tau\in P}h_\tau^{2(s+t)}\|Tv\|_{H^s(\tau)}^2 
\lesssim
\sum_{z\in N_P}h_z^{2(s+t)}\|Tv\|_{H^s(\omega(z))}^2 
\lesssim
\|v\|_{H^t(\Omega)}^2,
\qquad
v\in S^d_{P},
\end{equation}
for $P\in\adm(P_0)$,
where 
$N_P$ is the set of all vertices in the triangulation $P$,
$\omega(z)$ is the star around $z$ with respect to $P$,
and $h_z=\min_{\{\tau:z\in\overline{\tau}\}}h_\tau$.
\end{theorem}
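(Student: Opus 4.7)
The first inequality follows from shape regularity and the K-mesh property: for each $\tau\in P$, pick any vertex $z\in\overline\tau$; then $\tau\subseteq\omega(z)$ and local quasi-uniformity gives $h_\tau\sim h_z$, so $h_\tau^{2(s+t)}\|Tv\|_{H^s(\tau)}^2\lesssim h_z^{2(s+t)}\|Tv\|_{H^s(\omega(z))}^2$, and local finiteness of the mesh controls the overlap multiplicity under summation.

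The main inequality I would prove by a localize-and-split strategy. For each $z\in N_P$, choose a smooth cutoff $\chi_z$ equal to $1$ on a slight enlargement $\omega_1(z)$ of $\omega(z)$ and supported in a further enlargement $\omega_2(z)$, both of diameter $\sim h_z$, with $|\partial^\alpha\chi_z|\lesssim h_z^{-|\alpha|}$, and write $Tv|_{\omega(z)} = T(\chi_z v) + T((1-\chi_z)v)$. For the near part, I would proceed by duality: $\|T(\chi_z v)\|_{H^s(\omega(z))} = \sup_\psi \langle \chi_z v, T^*\psi\rangle$ with $\psi\in\tilde H^{-s}(\omega(z))$ of unit norm; rescaling the patch $\omega_2(z)$ to a unit-size reference, applying the mapping property of $T^*$ together with standard finite-element inverse estimates at unit scale, and scaling back should yield $\|T(\chi_z v)\|_{H^s(\omega(z))}\lesssim h_z^{-(s+t)}\|v\|_{H^t(\omega_2(z))}$. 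Weighting by $h_z^{2(s+t)}$, summing over $z$, and using local finiteness of $\{\omega_2(z)\}$ then absorbs the near-part contribution into $\|v\|_{H^t(\Omega)}^2$. For the far part, on $\omega(z)$ the function $T((1-\chi_z)v)$ is a classical integral with $|y-x|\gtrsim h_z$; using the standard kernel estimates \eqref{e:standard-kernel} to bound derivatives pointwise and decomposing the $y$-integration into dyadic annuli $A_k(z) = \{y:2^kh_z\leq\dist(y,z)<2^{k+1}h_z\}$, Cauchy--Schwarz across $k$ produces a bound whose sum over $z$ telescopes, via local finiteness (together with an auxiliary inverse estimate on $S^d_P$ when $t<0$), to $\|v\|_{H^t(\Omega)}^2$.

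I expect the near-part estimate to be the main obstacle. The naive combination of $T:H^{t+\sigma}(\Omega)\to H^{-t+\sigma}(\Omega)$ with the embedding $H^{-t+\sigma}(\Omega)\hookrightarrow H^s(\omega(z))$, legitimate because $s+t<\sigma$, would demand $\chi_z v\in H^{t+\sigma}(\Omega)$; this already fails for $v\in S^0_P$ once $t+\sigma\geq\tfrac12$ due to element-edge jumps in $v$. Routing through $T^*$ applied to smooth test functions $\psi$ sidesteps the regularity issue entirely by placing all of the high-order smoothness on $\psi$, while the explicit rescaling to a unit configuration makes the exponent $s+t$ appear transparently from the scaling of the kernel order. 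A secondary subtlety is the bookkeeping of the geometric series in the far-field dyadic sum when $s$ is a fractional Slobodeckij exponent, for which I would compute the seminorm directly from pointwise derivative bounds of $K$ on $\Sigma$, and the conversion between the $L^2$ norms produced by the far-field analysis and the target $H^t$ norm, which for negative $t$ uses inverse estimates on the finite element space.
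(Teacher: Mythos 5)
Your first inequality is fine, and your overall strategy --- a spatial cutoff decomposition $Tv=T(\chi_z v)+T((1-\chi_z)v)$ on each star, with kernel estimates for the far part --- is genuinely different from the paper's proof, which instead expands $Tv$ in a biorthogonal wavelet basis, splits off the low-frequency component $Q_\Lambda Tv$ (handled by the inverse estimate \eqref{e:wav-inverse} on the piecewise-polynomial dual-wavelet space), and controls the high-frequency coefficients $\langle Tv,\psi_\lambda\rangle$ by Schur tests over local, near-field and far-field blocks. Your route is closer in spirit to the complementary approach of \cite{FKMP11,FKMP11a}. However, it has a genuine gap precisely at the point you identify as the main obstacle. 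In the duality step $\|T(\chi_z v)\|_{H^s(\omega(z))}=\sup_\psi\langle\chi_z v,T^*\psi\rangle$, the supremum runs over the unit ball of $\tilde H^{-s}(\omega(z))$, whose elements are arbitrary distributions of negative order: no inverse estimate applies to them, so you cannot ``place the high-order smoothness on $\psi$'' --- the quantity $\|\psi\|_{H^{t+\sigma}}$ needed to invoke $T^*:H^{t+\sigma}\to H^{-t+\sigma}$ is uncontrolled. If you instead shift the derivatives back onto $\chi_z v$, you need $\chi_z v\in H^{t+\sigma}$, which is exactly the failure mode you flagged for $d=0$. Nor does rescaling $\omega_2(z)$ to unit size help: the hypothesis on $T^*$ is a single global operator-norm bound on $\Omega$ with no homogeneity, and the rescaled operators would have to be bounded $H^{t+\sigma}\to H^{-t+\sigma}$ \emph{uniformly in $h_z$}, which does not follow from the assumptions (the kernel bounds \eqref{e:standard-kernel} scale, but they say nothing about the diagonal, which is where the near part lives). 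A symptom of the gap is that the hypothesis $\sigma>\frac n2$ is never actually used in your argument; in the paper it enters through the near-field Schur test, where the weight $\varrho=2\sigma-n>0$ is what makes the row and column sums converge.

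The mechanism the paper uses to escape this trap is that, after removing $Q_\Lambda Tv$, the remainder is paired only against \emph{individual high-frequency primal wavelets} $\psi_\lambda$, $\lambda\in\Lambda^c$; these are concrete functions whose Sobolev norms of every relevant order scale like $2^{|\lambda|(r-t)}$, so the estimate $|\langle\psi_\lambda,T\psi_\mu\rangle|\lesssim 2^{(|\mu|-|\lambda|)\sigma}$ follows from the global mapping property without ever needing $t+\sigma$ derivatives of $v$ or of a generic dual test function. Your far-field part is also under-justified: for $t<0$ the dyadic-annulus factors $(2^kh_z)^{-n/2-2t}$ need not decay in $k$, and the subsequent summation over $z$ of overlapping annuli is exactly the double summation that the paper's far-field lemma resolves with a weighted Schur test; ``Cauchy--Schwarz across $k$ and telescoping'' is not enough, and the auxiliary inverse estimate \eqref{e:new-inverse} you invoke for $t<0$ only repairs the final conversion to $\|v\|_{H^t}$, not the convergence of the double sum itself. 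In short, the skeleton of your decomposition is viable (it is essentially how \cite{FKMP11a} proceeds for the single layer potential, using potential-theoretic interior regularity in place of your duality step), but as written both the near-part and the far-part estimates are asserted rather than proved, and the near-part argument as stated cannot be completed.
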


As already mentioned, note that taking $v=0$ on a subcollection of triangles in the above theorem allows us to treat the case of open surfaces $\Gamma\subset\Omega$.

\begin{remark}\label{r:bdd-bdry-int}
The boundedness of both
$T,T^*:H^{t+\sigma}(\Omega)\to H^{-t+\sigma}(\Omega)$ has been proved for $\sigma<\frac12$ for general boundary integral operators on Lipschitz surfaces in \cite{Cost88},
and the endpoint case $\sigma=\frac12$ is established for boundary integral
operators associated to the Laplace operator on Lipschitz domains in
\cite{Verch84}.
Unfortunately, we see that the preceding results require more than
$\sigma=\frac12$.
Since $\mns\geq\sigma+|t|$,
we necessarily have $\mns>\frac{n}2+|t|$.
This allows Lipschitz curves for $|t|<\frac12$,
and $C^{1,1}$ curves and surfaces for $|t|<1$.
Even though in general it rules out polyhedral surfaces,
note that for the case of an open surface $\Gamma$, 
its boundary can be Lipschitz polygonal, as long as one can find a
smooth enough manifold $\Omega$ with $\Gamma\subset\Omega$.
As for the question of whether the boundedness holds for $\sigma<\nu-\frac12$ for $C^{\mns-1,1}$ domains,
let us note that the main ingredients of the results in \cite{Cost88} are the near-optimal trace theorem for Lipschitz domains, 
which appears in \cite{Cost88} and \cite{Ding96},
and a certain regularity result for the Poincar\'e-Steklov operator for Lipschitz domains, which appears, e.g., in \cite{McL00}.
The relevant version of the trace theorem has been proved for $C^{\mns-1,1}$ domains in \cite{Kim07}, see also \cite{Mar87}.
For the regularity of the Poincar\'e-Steklov operator, the author has not been able to locate in the literature a result strong enough to give the boundedness for $\sigma<\nu-\frac12$, although there are results, e.g., in \cite{McL00}, that imply $\sigma=\mns-1$ for standard boundary integral operators.
\end{remark}


The proof of Theorem \ref{t:inverse-ineq} is divided into several lemmas that follow.
The main idea is to decompose $Tv$ 
into the part that is in $S^d_P$, which we call the \emph{low frequency} part, 
and its complement, which we call the \emph{high frequency} part.
The low frequency part is readily handled by 
either the standard inverse estimates or
the new inverse estimates from \citet*{DFGHS04}.
To treat the high frequency part, 
we introduce a wavelet basis for the complement of $S^d_P$,
and as naturally suggested by the techniques we use,
the high frequency part is further decomposed into terms corresponding
to \emph{far-field}, \emph{near-field}, and \emph{local} interactions.
Please be warned that these names are only suggestive in that, e.g., the local interaction terms may contain 
interactions between two wavelets with non-overlapping supports, although they cannot be too far apart.

Our main analytic tool is a locally supported wavelet basis for the energy space $H^t$, 
with the dual multiresolution analysis based on piecewise polynomial-type spaces.
More specifically, we assume that there is a Riesz basis $\Psi=\{\psi_\lambda\}_{\lambda\in\nabla}$ of $H^t$ of {\em wavelet type},
whose dual, denoted by $\tilde\Psi=\{\tilde{\psi}_\lambda\}_{\lambda\in\nabla}$, is locally supported piecewise polynomial wavelets,
where $\nabla$ is a countable index set.
Now we expand on what we mean exactly by the various adjectives such
as ``wavelet type'' that characterize the bases $\Psi$ and
$\tilde{\Psi}$.

The collections $\Psi\subset H^t(\Omega)$ and
$\tilde\Psi\subset H^{-t}(\Omega)$ are biorthogonal: $\langle\psi_\lambda,\tilde\psi_\mu\rangle=\delta_{\lambda\mu}$,
and are \emph{Riesz bases} for their
corresponding spaces, meaning that
\begin{equation}\label{e:normeq}
\left\|\textstyle\sum_{\lambda\in\nabla}
  v_\lambda\psi_\lambda\right\|_{H^t(\Omega)} 
\eqsim
\|(v_\lambda)_\lambda\|_{\ell^2(\nabla)},
\qquad
\left\|\textstyle\sum_{\lambda\in\nabla}
  v_\lambda\tilde\psi_\lambda\right\|_{H^{-t}(\Omega)} 
\eqsim 
\|(v_\lambda)_\lambda\|_{\ell^2 (\nabla)},
\end{equation}
for any sequence $(v_\lambda)_\lambda\in\ell^2(\nabla)$.
Here the notation $X\eqsim Y$ means $Y\lesssim X\lesssim Y$.
Each wavelet $\psi_\lambda$ or $\tilde\psi_\lambda$ has a scale,
which is encoded by the function $|\cdot|:\nabla\to\N$.
We say that $\psi_\lambda$ and $\tilde\psi_\lambda$ have the scale
$2^{-|\lambda|}$, which is justified by the \emph{locality} properties
\begin{equation}
\diam(\supp\,\psi_\lambda)\lesssim2^{-|\lambda|},
\qquad
\diam(\supp\,\tilde\psi_\lambda)\lesssim2^{-|\lambda|}.
\end{equation}
We will also assume that the wavelet supports are \emph{locally finite},
in the sense that
\begin{equation}
\#\{\lambda:|\lambda|=\ell,\,B(x,2^{-\ell})\cap\supp\,\psi_\lambda\neq\varnothing\}\lesssim1,
\end{equation}
and similarly for the dual wavelets, where the bounds do not depend on
$\ell\in\N$ and $x\in\Omega$,
and $B(x,\rho)=\{y\in\Omega:\dist(x,y)<\rho\}$.
An immediate consequence of this property is that $\#\{\lambda\in\nabla:|\lambda|=\ell\}\lesssim2^{n\ell}$.

We assume that the wavelets have the so-called
{\em cancellation property of order} $p \in \N$, saying
that\footnote{Note that $p$ and $\tilde p$ are, respectively,
$\tilde d$ and ${d}$ as compared to, e.g., \cite{Stev04}.}
 there exists a constant $\eta>0$,
such that for any $q \in [1,\infty]$, for all continuous,
piecewise smooth functions $v$ on $P_0$ and $\lambda \in \nabla$,
\begin{equation}
|\langle v, \psi_{\lambda} \rangle| \lesssim
2^{-|\lambda|(\frac{n}{2}-\frac{n}{q}+t+p)}
\max_{\tau\in P_0}
|v|_{W^{p,q}(B(\supp\,\psi_{\lambda},2^{-|\lambda|}\eta) \cap \tau)},
\end{equation}
where for \(A \subset \R^{N}\) and \(\eps > 0\),
\(B(A,\eps):=\{y \in \R^{N}: {\rm dist}(A,y)< \eps\}\).

Furthermore, we assume that
for all $r \in [-p, \gamma)$, $s<\gamma$,
necessarily with  $|s|,|r| \leq \mns$,
\begin{equation}
\| w \|_{H^r(\Omega)} \lesssim 2^{\ell(r-s)}\|w\|_{H^s(\Omega)},
\qquad
\textrm{for}
\quad
w\in\spn\{\psi_{\lambda}:|\lambda|=\ell\},
\end{equation}
with $\gamma=\sup\{s:\Psi\subset H^{s}(\Omega)\}$,
and similarly for the dual wavelets, with $\gamma$ and $p$
replaced by $\tilde\gamma$ and $\tilde{p}$, respectively.

We assume that the norm equivalence
\begin{equation}\label{e:bnormeq}
\left\|\textstyle\sum_{\lambda\in\nabla}
  v_\lambda\tilde\psi_\lambda\right\|_{H^{s}(\Omega)}^2
\eqsim
\sum_{\lambda\in\nabla}2^{2(s+t)|\lambda|}|v_\lambda|^2,
\end{equation}
is valid for $s\in(-\tilde p,p)\cap(-\gamma,\tilde\gamma)$.
As far as the following proof of Theorem \ref{t:inverse-ineq} is concerned,
we will use only the``greater than'' part of the first norm equivalence in \eqref{e:normeq},
and the ``less than'' part of \eqref{e:bnormeq} with $s$ equal to the same parameter in the theorem.
For this and other reasons, in what follows we assume that the parameters $p$, $\tilde p$, $\gamma$, and $\tilde\gamma$ are sufficiently large.
Such a possibility is guaranteed by the constructions in \cite{DSch99}, see the remark below.

For $\lambda\in\nabla$, we define
$\supb_\lambda=\Omega\cap B_\lambda$, with $B_\lambda$ an open ball with $\diam(B_\lambda)\lesssim2^{-|\lambda|}$,
containing both $\supp\,\psi_\lambda$ and $\supp\,\tilde\psi_\lambda$.
Thus $\supb_\lambda$ can be thought of as a common support of
$\psi_\lambda$ and $\tilde\psi_\lambda$.
We then define
\begin{equation}
\Lambda = \{\lambda:\supb_\lambda\cap\tau\neq\varnothing\textrm{
  and }\diam(\supb_\lambda)\geq \delta h_\tau\textrm{ for some }\tau\in P\},
\end{equation}
where $\delta>0$ is a constant so small that
$\Omega_\lambda\cap\tau\neq\varnothing$ and $\lambda\not\in\Lambda$
imply $\Omega_\lambda\subset\omega(\tau)$ for $\tau\in P$.
Roughly speaking, the index set $\Lambda$ corresponds to the wavelets that are 
needed to resolve the finite element space $S^d_P$.
Note that the existence of such a $\delta>0$ is guaranteed by the shape regularity of $P$.
On the wavelet equivalent of $S^d_P$,
we assume the inverse inequality
\begin{equation}\label{e:wav-inverse-pos}
\|v\|_{H^s(\Omega)} \lesssim \|h_P^{-s}v\|,
\qquad
v\in S_\Lambda:=\spn\{\tilde\psi_\lambda:\lambda\in\Lambda\},
\end{equation}
for $s\in[0,\mns]$.
By a duality argument (test $v$ against $w\eqsim h_P^{2s}v$) 
this implies
\begin{equation}\label{e:wav-inverse}
\|h_P^sv\| \lesssim \|v\|_{H^{-s}(\Omega)},
\qquad
v\in S_\Lambda,
\end{equation}
for $s\in[0,\mns]$.

\begin{remark}
Concrete examples of wavelet bases satisfying all our assumptions are
given by the {\em duals} of the bases constructed in \cite{DSch99}.
On triangulations over a Lipschitz polyhedral surface, one can also use the construction in \cite{Stev03}.
The only reason for insisting on polynomial dual wavelets is that in
the proof of Lemma \ref{l:low-freq} below, we use the inverse estimate
\eqref{e:wav-inverse}, which is a consequence of \eqref{e:wav-inverse-pos}.
The latter estimate (hence both) can be proven by adapting the techniques from \cite{DFGHS04}.
\end{remark}

Now that we have settled on our main tool, we can start with {\em Proof of Theorem \ref{t:inverse-ineq}}.
Let $v\in S^d_P$ be as in the theorem.
Then first we estimate the part of $Tv$ that is in $S_\Lambda$.
We define the projection operator $Q_\Lambda:H^{-t}(\Omega)\to
S_\Lambda$ by
\begin{equation}
Q_\Lambda \sum_{\lambda\in\nabla} w_\lambda\tilde{\psi}_\lambda =
\sum_{\lambda\in\Lambda} w_\lambda\tilde{\psi}_\lambda.
\end{equation}

In what follows, 
we will abbreviate the Sobolev norms as $\|\cdot\|_{s,\omega}=\|\cdot\|_{H^s(\omega)}$ and $\|\cdot\|_{s}=\|\cdot\|_{H^s(\Omega)}$.

\begin{lemma}[Low frequency]\label{l:low-freq}
Let $s\geq0$, and
suppose that either $t>0$ or $s+t>0$.
Then we have
\begin{equation}
\sum_{z\in N_P}h_z^{2(s+t)}\|Q_{\Lambda}Tv\|_{s,\omega(z)}^2 \lesssim\|v\|_{t}^2
\qquad\textrm{for}\quad
v\in H^t(\Omega).
\end{equation}
\end{lemma}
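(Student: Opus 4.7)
The plan is to exploit the fact that $Q_\Lambda Tv$ lies in $S_\Lambda$, the span of dual wavelets $\tilde\psi_\lambda$ whose scales $2^{-|\lambda|}$ are, by the definition of $\Lambda$, no finer than the local mesh size. Expanding $Q_\Lambda Tv$ in this basis will convert the weighted $H^s$-norms on the left into a plain $\ell^2$-sum of wavelet coefficients, which by the Riesz basis property equals $\|Q_\Lambda Tv\|_{-t}^2$ up to constants and is therefore controlled by $\|v\|_t^2$ via the boundedness of $T\colon H^t(\Omega)\to H^{-t}(\Omega)$ and the stability of $Q_\Lambda$ in $H^{-t}$.

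Concretely, I would write $Q_\Lambda Tv=\sum_{\lambda\in\Lambda}c_\lambda\tilde\psi_\lambda$ with $c_\lambda=\langle Tv,\psi_\lambda\rangle$, so that
\[
\sum_\lambda |c_\lambda|^2 \eqsim \|Q_\Lambda Tv\|_{-t}^2 \le \|Tv\|_{-t}^2 \lesssim \|v\|_t^2,
\]
using \eqref{e:normeq} and the fact that $Q_\Lambda$ restricts to the indices in $\Lambda$. For each $z\in N_P$ set $\Lambda_z=\{\lambda\in\Lambda:\supp\,\tilde\psi_\lambda\cap\omega(z)\neq\varnothing\}$. From the definition of $\Lambda$ (with $\delta$ small), the locality of $\tilde\psi_\lambda$, and the graded-mesh property, I would verify that $2^{|\lambda|}h_z\lesssim 1$ for every $\lambda\in\Lambda_z$. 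A localized version of the norm equivalence \eqref{e:bnormeq} on $\omega(z)$ then yields
\[
\|Q_\Lambda Tv\|_{s,\omega(z)}^2 \lesssim \sum_{\lambda\in\Lambda_z}2^{2(s+t)|\lambda|}|c_\lambda|^2.
\]
Since the hypothesis forces $s+t>0$, the weights absorb: $h_z^{2(s+t)}\,2^{2(s+t)|\lambda|}\lesssim 1$, so multiplying by $h_z^{2(s+t)}$ and summing over $z$, the double sum collapses to $\sum_\lambda|c_\lambda|^2$ by local finiteness of the wavelet supports, and the claimed bound follows.

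The main technical obstacle is securing the local form of \eqref{e:bnormeq} on $\omega(z)$, since \eqref{e:bnormeq} is only stated globally; this step relies on the explicit $L^\infty$-locality of the wavelet supports together with the piecewise polynomial structure of $\tilde\Psi$, established via a cutoff or by direct computation on a slightly enlarged neighborhood of $\omega(z)$. An alternative route avoiding the local norm equivalence is to apply a local inverse estimate (a local counterpart of \eqref{e:wav-inverse-pos}) to reduce the left-hand side to $\|h_P^t Q_\Lambda Tv\|^2$, which when $t\ge 0$ is controlled by $\|Q_\Lambda Tv\|_{-t}^2$ through \eqref{e:wav-inverse} plus the stability of $Q_\Lambda$; the residual case $t<0$---where the hypothesis $s+t>0$ becomes essential to prevent the weight $h_P^t$ from diverging---must still be handled by the wavelet-expansion argument above.
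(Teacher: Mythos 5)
Your main route breaks down at the final summation, not at the localization of \eqref{e:bnormeq} (which is fine: restrict the expansion to $\Lambda_z$, note the discarded wavelets vanish on $\omega(z)$, and bound the local norm by the global one). After absorbing the weights via $h_z2^{|\lambda|}\lesssim1$ you are left with $\sum_{z}\sum_{\lambda\in\Lambda_z}|c_\lambda|^2=\sum_{\lambda\in\Lambda}\#\{z:\supb_\lambda\cap\omega(z)\neq\varnothing\}\,|c_\lambda|^2$, and you invoke ``local finiteness of the wavelet supports'' to bound the multiplicity. But that property controls the number of \emph{wavelets of a given scale} meeting a ball of that scale; it says nothing about the number of \emph{stars} met by one fixed wavelet. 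For $\lambda\in\Lambda$ the wavelet is by definition coarse relative to at least part of the local mesh, so $\supb_\lambda$ can contain arbitrarily many vertices --- already a level-zero scaling function overlaps a constant fraction of all $\#P$ stars. Keeping the weights does not rescue the step: for fixed $\lambda$ one needs $2^{2(s+t)|\lambda|}\sum_{\{z:\,\supb_\lambda\cap\omega(z)\neq\varnothing\}}h_z^{2(s+t)}\lesssim1$, and since $\sum_zh_z^{n}\lesssim\vol\bigl(B(\supb_\lambda,C2^{-|\lambda|})\bigr)\lesssim2^{-n|\lambda|}$, this holds in general only when $2(s+t)\geq n$; on a quasi-uniform mesh of width $h$ the sum behaves like $h^{2(s+t)-n}\to\infty$ when $2(s+t)<n$. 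With $n=2$, $t=-\tfrac12$, $s=1$ one has $2(s+t)=1<2$, so the failure occurs precisely in the main application. This is exactly why the paper uses the coefficient expansion only for the \emph{high-frequency} part $(I-Q_\Lambda)Tv$, where $\lambda\in\Lambda^c$ forces $\supb_\lambda$ to be finer than every nearby triangle and hence to meet a uniformly bounded number of stars.

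The paper's proof of the low-frequency bound avoids wavelet coefficients altogether. For $t\le0$ (so $-t\ge0$ and, by hypothesis, $s+t>0$) it applies a standard inverse estimate star by star, $h_z^{2(s+t)}\|Q_\Lambda Tv\|_{s,\omega(z)}^2\lesssim\|Q_\Lambda Tv\|_{-t,\omega(z)}^2$, then sums using the super-additivity \eqref{e:sobolev-add} of the positive-order norm $H^{-t}$ over the (boundedly overlapping) stars, and finishes with the $H^{-t}$-stability of $Q_\Lambda$ and the boundedness of $T$. For $t>0$ it reduces to $\|h_P^tQ_\Lambda Tv\|^2$ and invokes \eqref{e:wav-inverse} --- this is precisely your ``alternative route'', which is correct in the range where you propose it. Your only real error, then, is sending the case $t<0$ back to the coefficient expansion; it should instead go through the $H^{-t}(\omega(z))$ localization just described.
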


\begin{proof}
First let $t\leq0$, and therefore $s+t>0$.
Then we have
\begin{equation}
\sum_{z\in N_P}h_z^{2(s+t)}\|Q_{\Lambda}Tv\|_{s,\omega(z)}^2
\lesssim
\sum_{z\in N_P}\|Q_{\Lambda}Tv\|_{-t,\omega(z)}^2
\leq
\|Q_{\Lambda}Tv\|_{-t}^2
\lesssim
\|Tv\|_{-t}^2
\lesssim
\|v\|_{t}^2,
\end{equation}
where we have used in succession a standard inverse estimate, the super-additivity
of the Sobolev norms \eqref{e:sobolev-add},
the stability of $Q_\Lambda$ in $H^{-t}$,
and the boundedness of $T$.

For the case $t>0$, a standard inverse estimate gives
\begin{equation}
\begin{split}
\sum_{z\in N_P}h_z^{2(s+t)}\|Q_{\Lambda}Tv\|_{s,\omega(z)}^2
&\lesssim
\sum_{z\in N_P}h_z^{2t}\|Q_{\Lambda}Tv\|_{\omega(z)}^2
\leq
\sum_{z\in N_P}\|h_P^{t}Q_{\Lambda}Tv\|_{\omega(z)}^2\\
&\lesssim
\|h_P^{t}Q_{\Lambda}Tv\|^2.
\end{split}
\end{equation}
At this point we employ the inverse estimate \eqref{e:wav-inverse},
to get
\begin{equation}
\|h_P^{t}Q_{\Lambda}Tv\|
\lesssim
\|Q_{\Lambda}Tv\|_{-t}
\lesssim
\|Tv\|_{-t}
\lesssim
\|v\|_{t},
\end{equation}
concluding the proof.
\end{proof}

What remains now is to bound $(I-Q_{\Lambda})Tv$, which consists of
only high frequency wavelets compared to what is in $Q_{\Lambda}Tv\in
S_\Lambda$.
To this end, for $\lambda\in\Lambda^c:=\nabla\setminus\Lambda$, let us define $\ell_\lambda$ by
\begin{equation}
2^{-\ell_\lambda} = \max\{h_\tau:\tau\in P,\,\tau\cap\supb_\lambda\neq\varnothing\},
\end{equation}
so that in light of the norm equivalence \eqref{e:bnormeq}, we can write
\begin{equation}\label{e:basic-wav-norm}
\begin{split}
\sum_{z\in N_P}h_z^{2(s+t)}\|(I-Q_{\Lambda})Tv\|_{s,\omega(z)}^2
&\lesssim
\sum_{z\in N_P}h_z^{2(s+t)}\sum_{\{\lambda\in\Lambda^c:\supb_\lambda\cap\,\omega(z)\neq\varnothing\}}2^{2|\lambda|(s+t)}|(Tv)_{\lambda}|^2\\
&\lesssim
\sum_{z\in N_P}\sum_{\{\lambda\in\Lambda^c:\supb_\lambda\cap\,\omega(z)\neq\varnothing\}}2^{-2\ell_\lambda(s+t)} 2^{2|\lambda|(s+t)}|(Tv)_{\lambda}|^2\\
&\lesssim
\sum_{\lambda\in\Lambda^c}2^{2(|\lambda|-\ell_\lambda)(s+t)}|(Tv)_{\lambda}|^2,
\end{split}
\end{equation}
where $(Tv)_{\lambda}=\langle Tv,\psi_{\lambda}\rangle$ is the
coordinate of $Tv$ with respect to $\tilde\psi_\lambda$, and in the
last step we have taken into account the fact that each
$\supb_\lambda$ intersects with only a uniformly bounded
number of stars $\omega(z)$.
Hence our aim is to bound the last expression in
\eqref{e:basic-wav-norm} by $\|(v_\lambda)_\lambda\|_{\ell^2}^2$ for
$v\in S_P^d$, where $v_\lambda=\langle v,\tilde\psi_\lambda\rangle$.

Before dealing with nonlocality of $T$, let us focus on the local
properties.
To this end, with $S_{P,z}=\{v\in S^d_P:v=0\textrm{ outside
}\omega^2(z)\}$,
define $Q_z$ to be the $L^2$-orthogonal projector onto $S_{P,z}$ in
case of piecewise constants,
and otherwise to be the quasi-interpolation operator onto
$S_{P,z}$, as discussed around \eqref{e:direct-quasi}.
Here $\omega^k(z)=\omega(\omega^{k-1}(z))$ for $k\geq2$.

\begin{lemma}[Local interactions]\label{l:local}
Let $s\geq0$ and let $t+s<\sigma$.
Then we have
\begin{equation}
\sum_{z\in N_P}h_z^{2(s+t)}\|(I-Q_{\Lambda})TQ_z v\|_{s,\omega(z)}^2
\lesssim
\|v\|_t^2
\qquad\textrm{for}\quad
v\in S^d_P.
\end{equation}
\end{lemma}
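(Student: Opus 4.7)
The plan is to discard both the high-pass filter $I-Q_{\Lambda}$ and the localization to $\omega(z)$ by crude bounds, and then to exploit the local support $\omega^2(z)$ of $Q_z v$ via the mapping properties of $T$ combined with an inverse estimate. First, the operator $I-Q_{\Lambda}$ simply discards a subset of the coefficients in the expansion against the dual wavelet basis, so by the norm equivalence \eqref{e:bnormeq} it is bounded on $H^s(\Omega)$; enlarging the domain from $\omega(z)$ to all of $\Omega$ then yields
\[
\|(I-Q_{\Lambda})TQ_z v\|_{s,\omega(z)} \;\leq\; \|(I-Q_{\Lambda})TQ_z v\|_s \;\lesssim\; \|TQ_z v\|_s.
\]

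Next, I would interpolate between $T:H^t\to H^{-t}$, $T:H^{t+\sigma}\to H^{-t+\sigma}$, and (dually, from the analogous bound on $T^*$) $T:H^{t-\sigma}\to H^{-t-\sigma}$ to produce $T:H^{t+\sigma'}\to H^{-t+\sigma'}$ for every $\sigma'\in[-\sigma,\sigma]$. The hypothesis $s+t<\sigma$ (together with $s+t\geq 0$, the regime in which the lemma is actually invoked) allows the choice $\sigma'=s+t$, giving $T:H^{2t+s}\to H^s$ and hence $\|TQ_z v\|_s\lesssim\|Q_z v\|_{2t+s}$. Since $Q_z v\in S^d_P$ is supported in $\omega^2(z)$, which consists of a uniformly bounded number of triangles of diameter $\sim h_z$ by shape regularity and the K-mesh property, the standard inverse estimate delivers $\|Q_z v\|_{2t+s}\lesssim h_z^{-(s+t)}\|Q_z v\|_t$. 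Chaining these three estimates yields
\[
h_z^{2(s+t)}\|(I-Q_{\Lambda})TQ_z v\|_{s,\omega(z)}^2 \;\lesssim\; \|Q_z v\|_t^2,
\]
and summing over $z$, using $H^t$-stability of $Q_z$ together with local finiteness of the patches $\{\omega^2(z)\}_{z\in N_P}$, produces $\sum_z\|Q_z v\|_t^2\lesssim\|v\|_t^2$, which is the required bound.

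The main difficulty I anticipate is the bookkeeping of admissible Sobolev exponents: the value $2t+s$ must lie in the range where both the inverse estimate on $S^d_P$ and the wavelet norm equivalences apply, and $s+t$ must sit in $[0,\sigma]$ for the interpolation step to be legitimate. All of this follows from the lemma's hypothesis $s+t<\sigma$, the standing restriction $|s|\leq\mns$, and the assumption $S^d_P\subset H^t$, but the verification is what pins down the precise choice of interpolation exponent and ensures that the prefactor $h_z^{2(s+t)}$ exactly cancels the $h_z^{-2(s+t)}$ coming from the inverse estimate.
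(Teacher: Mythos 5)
Your route is genuinely different from the paper's. The paper never leaves the wavelet coordinate representation: it reduces the left-hand side to the weighted coefficient sum \eqref{e:basic-wav-norm} over $\lambda\in\Lambda^c$, tests that sequence against an arbitrary $(w_\lambda)\in\ell^2(\Lambda^c)$, and for each level $\ell$ combines the Bernstein estimate $\|w\|_{t-\sigma}\lesssim 2^{-\sigma\ell}\|w\|_t$ on spans of level-$\ell$ wavelets with $T:H^{t+\sigma}\to H^{-t+\sigma}$ and an inverse estimate on $Q_zv$, then sums the resulting geometric series in $\ell$ (this is where the strict inequality $s+t<\sigma$ and the restriction to $\Lambda^c$ are actually used). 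You instead interpolate the mapping properties of $T$ down to the single exponent needed, $T:H^{2t+s}\to H^{s}$, so that the weight $h_z^{2(s+t)}$ cancels exactly against one inverse estimate on $Q_zv$; the high-pass filter $I-Q_\Lambda$ then plays no role and can be discarded via \eqref{e:bnormeq}. Where it applies, your argument is cleaner, uses a weaker smoothing exponent ($s+t$ rather than $\sigma$), and even tolerates the endpoint $s+t=\sigma$.

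Two caveats. First, your chain is non-vacuous only if $\|Q_zv\|_{2t+s}<\infty$, i.e.\ only if $S^d_P\subset H^{2t+s}(\Omega)$, whereas the theorem assumes only $S^d_P\subset H^t(\Omega)$. For $d=1$ (where $2t+s=2t<\tfrac32$) and for the negative-order application (where $2t+s=2t+r<\tfrac12$ by the standing restriction on $r$) this is satisfied, but for piecewise constants in the zero-order case with $s=r\geq\tfrac12$ --- a regime the paper explicitly allows and even prefers ($r=1$) --- the quantity $\|Q_zv\|_{H^{r}(\Omega)}$ is infinite and the step $\|TQ_zv\|_s\lesssim\|Q_zv\|_{2t+s}$ proves nothing. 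You are in good company: the paper's own proof writes $\|TQ_zv\|_{-t+\sigma}\lesssim\|Q_zv\|_{t+\sigma}$ with $\sigma>\tfrac{n}2$, an even stronger regularity demand on $Q_zv$; but be aware that this regime needs broken norms or some other repair in either approach. Second, your closing step ``$\sum_z\|Q_zv\|_t^2\lesssim\|v\|_t^2$ by $H^t$-stability of $Q_z$'' is not immediate for $t<0$: localization does not obviously preserve negative-order norms summably. The fix is to keep the $L^2$-norm explicit, $h_z^{2(s+t)}\|Q_zv\|^2_{2t+s}\lesssim h_z^{-2t}\|v\|^2_{\omega^3(z)}$, and apply the inverse estimate \eqref{e:new-inverse} once at the very end, $\sum_z h_z^{-2t}\|v\|^2_{\omega^3(z)}\lesssim\|h_P^{-t}v\|^2\lesssim\|v\|_t^2$, which is exactly how the paper concludes its own proof.
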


\begin{proof}
Replacing $v$ with $Q_zv$ in \eqref{e:basic-wav-norm}, we get
\begin{equation}
\sum_{z\in N_P}h_z^{2(s+t)}\|(I-Q_{\Lambda})TQ_z v\|_{s,\omega(z)}^2
\lesssim
\sum_{\lambda\in\Lambda^c}2^{2(|\lambda|-\ell_\lambda)(s+t)}|(TQ_z v)_{\lambda}|^2.
\end{equation}
In order to bound this, we view it as the (squared) $\ell^2(\Lambda^c)$-norm of a
sequence,
and we test the sequence against a general sequence $(w_\lambda)_\lambda\in \ell^2(\Lambda^c)$.
As a preparation to this, for $w\in\spn\{\psi_\lambda:|\lambda|=\ell\}$, we have
\begin{equation}\label{e:local-pf-1}
\begin{split}
|\langle w,TQ_z v\rangle|
&\lesssim \|w\|_{t-\sigma} \|TQ_z v\|_{-t+\sigma}
\lesssim 2^{-\sigma\ell} \|w\|_{t} \|Q_z v\|_{t+\sigma}\\
&\lesssim 2^{-\sigma\ell+(t+\sigma)\ell_z} \|w\|_{t} \|Q_z v\|
\lesssim 2^{-\sigma\ell+(t+\sigma)\ell_z} \|w\|_{t} \|v\|_{\omega^3(z)},
\end{split}
\end{equation}
where $\ell_z$ is defined by $2^{-\ell_z}=h_z$.
Then assuming that $t\leq0$, for $w=\sum_\lambda w_\lambda\psi_\lambda$ with $(w_\lambda)_\lambda\in \ell^2(\Lambda^c)$, we infer
\begin{multline}
\sum_{\lambda\in\Lambda^c}2^{(|\lambda|-\ell_\lambda)(s+t)}(TQ_z v)_{\lambda}w_\lambda
\lesssim
\sum_{z\in N_P}\sum_{\ell\geq\ell_z+c}
2^{(\ell-\ell_z)(s+t)}
\left|\left\langle
\sum_{\{|\lambda|=\ell:\supb_\lambda\cap\,\omega(z)\neq\varnothing\}}
w_\lambda\psi_\lambda,
TQ_z v\right\rangle\right|\\
\lesssim
\sum_{z\in N_P}\sum_{\ell\geq\ell_z+c}
2^{(\ell-\ell_z)(s+t-\sigma)+\ell_zt}
\left(\sum_{\{|\lambda|=\ell:\supb_\lambda\cap\,\omega(z)\neq\varnothing\}}
|w_\lambda|^2\right)^{\frac12}
\|v\|_{\omega^3(z)}\\
\lesssim
\sum_{z\in N_P}
2^{\ell_zt}
\left(\sum_{\{\lambda\in\Lambda^c:\supb_\lambda\cap\,\omega(z)\neq\varnothing\}}
|w_\lambda|^2\right)^{\frac12}
\|v\|_{\omega^3(z)}
\lesssim
\|w\|_{t}
\|h_P^{-t}v\|
\lesssim
\|w\|_{t}
\|v\|_{t},
\end{multline}
where $c\in\R$ is a constant that depends on $\delta$.
This establishes the lemma.

The case $t>0$ is proven similarly, by using
\begin{equation}
|\langle w,TQ_z v\rangle|
\lesssim 2^{-\sigma\ell+\sigma\ell_z} \|w\|_{t} \|v\|_{t,\omega^3(z)},
\end{equation}
instead of \eqref{e:local-pf-1}.
\end{proof}

To deal with nonlocality of $T$, 
we will employ the Schur test, which we recall here.

\begin{lemma}[Schur test]\label{l:schur}
Let $\Lambda$ and $M$ be countable sets,
and let $\ell^2(\Lambda,w)$ be the weighted $\ell^2$-space with the norm
\begin{equation}
\|v\|_{\ell^2(\Lambda,w)} = \|\{v_\lambda w_\lambda\}_{\lambda\in\Lambda}\|_{\ell^2(\Lambda)},
\end{equation}
where $\{w_\lambda\}_{\lambda\in\Lambda}$ is a given positive sequence
of weights.
For a matrix with entries $T_{\lambda\mu}$, $\lambda\in\Lambda$,
$\mu\in M$,
we consider its boundedness as a linear operator
$T:\ell^2(M)\to\ell^2(\Lambda,w)$.
Suppose that there exist two positive sequences
$\{\theta_\lambda\}_{\lambda\in\Lambda}$ and $\{\omega_\mu\}_{\mu\in
  M}$,
and two numbers $\alpha,\beta\in\R$ such that
\begin{equation}
\sum_{\mu\in M}|T_{\lambda\mu}|\omega_\mu\leq\alpha^2\theta_\lambda,
\qquad
\sum_{\lambda\in \Lambda}|T_{\lambda\mu}|w_\lambda^2\theta_\lambda\leq\beta^2\omega_\mu.
\end{equation}
Then we have
\begin{equation}
\|Tv\|_{\ell^2(\Lambda,w)} \leq \alpha\beta \|v\|_{\ell^2(M)},
\qquad v\in \ell^2(M).
\end{equation}
\end{lemma}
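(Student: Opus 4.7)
The plan is to prove this by the classical Schur-test argument, namely a weighted Cauchy–Schwarz estimate applied twice, first entry-wise and then on the outer sum.

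First, I would split each matrix entry by writing $|T_{\lambda\mu}| = |T_{\lambda\mu}|^{1/2}\cdot |T_{\lambda\mu}|^{1/2}$ and inserting the balancing weight $\omega_\mu^{1/2}\omega_\mu^{-1/2}$. Then Cauchy–Schwarz on the index $\mu$ gives, for each fixed $\lambda$,
\begin{equation*}
|(Tv)_\lambda|^2 \leq \Bigl(\sum_{\mu\in M} |T_{\lambda\mu}|\,\omega_\mu\Bigr)\Bigl(\sum_{\mu\in M} |T_{\lambda\mu}|\,\omega_\mu^{-1}|v_\mu|^2\Bigr),
\end{equation*}
and the first factor is bounded by $\alpha^2\theta_\lambda$ by hypothesis.

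Next, I would form $\|Tv\|_{\ell^2(\Lambda,w)}^2 = \sum_\lambda w_\lambda^2 |(Tv)_\lambda|^2$, substitute the estimate above, and interchange the order of summation (justified by nonnegativity of all terms), obtaining
\begin{equation*}
\|Tv\|_{\ell^2(\Lambda,w)}^2 \leq \alpha^2 \sum_{\mu\in M} \omega_\mu^{-1}|v_\mu|^2 \sum_{\lambda\in\Lambda} |T_{\lambda\mu}|\,w_\lambda^2\theta_\lambda.
\end{equation*}
Applying the second hypothesis to the inner sum bounds it by $\beta^2\omega_\mu$, and the $\omega_\mu$ cancels against $\omega_\mu^{-1}$, leaving $\alpha^2\beta^2\sum_\mu|v_\mu|^2 = \alpha^2\beta^2\|v\|_{\ell^2(M)}^2$, which is exactly the desired inequality.

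There is no real obstacle here: the only subtle point is choosing the correct weight to insert inside Cauchy–Schwarz so that the two hypotheses match up after the swap of sums, and the choice $\omega_\mu^{1/2}$ is dictated by the form of the first hypothesis. A minor technical remark is that one should first assume $v$ has finite support to justify the manipulations without convergence worries, then pass to general $v\in\ell^2(M)$ by density, since all intermediate expressions involve only nonnegative terms and the final bound is uniform.
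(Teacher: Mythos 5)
Your proof is correct and is precisely the classical Schur-test argument (weighted Cauchy--Schwarz in $\mu$ with the splitting $|T_{\lambda\mu}|^{1/2}\omega_\mu^{1/2}\cdot|T_{\lambda\mu}|^{1/2}\omega_\mu^{-1/2}$, followed by Tonelli and the second hypothesis); the paper states this lemma as a recalled standard fact and gives no proof, so there is nothing to diverge from. Your closing remark about finite support is harmless but unnecessary, since all terms are nonnegative and the interchange of sums is already justified for arbitrary $v\in\ell^2(M)$.
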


With the Riesz bases $\Psi\subset H^t(\Omega)$ and $\tilde\Psi\subset
H^{-t}(\Omega)$ at hand, the operator $T:H^t(\Omega)\to
H^{-t}(\Omega)$ can be thought of as the bi-infinite matrix $T:\ell^2(\nabla)\to
\ell^2(\nabla)$ with the elements $T_{\lambda\mu}=\langle \psi_{\lambda}, T \psi_{\mu} \rangle$.
The following estimate on these elements, established in \cite{Stev04},
will be crucial:
\begin{equation}
|T_{\lambda\mu}|
=|\langle \psi_{\lambda}, T \psi_{\mu} \rangle|
\lesssim 
\left(\frac{2^{-||\lambda|-|\mu||/2}}{\delta(\lambda,\mu)}\right)^{n+2t+2p}
\end{equation}
where
\begin{equation}
\delta(\lambda,\mu) = 2^{\min\{|\lambda|,|\mu|\}}\dist(\supb_\lambda,\supb_\mu).
\end{equation}
See also \cite{Schn98} and \cite{DS99} for earlier derivations for less general cases.

Let us fix a constant $\eps>0$, whose value is to be chosen later.
The following result shows that the far-field terms behave rather well.
Note that the condition $p+t>0$ is immaterial to us since we can choose $p$ at will.

\begin{lemma}[Far-field interactions]\label{l:far-field}
Define the operator $F:H^t(\Omega)\to H^{-t}(\Omega)$ with matrix elements
$F_{\lambda\mu} = T_{\lambda\mu}$
for $\lambda\in\Lambda^c$ and $\mu\in\nabla$ satisfying
$\dist(\supb_\lambda,\supb_\mu)\geq\eps\max\{2^{-\ell_\lambda},2^{-|\mu|}\}$, 
and $F_{\lambda\mu}=0$ otherwise.
Let $s+t>0$ and $p+t>0$.
Then we have
\begin{equation}
\sum_{\lambda\in\Lambda^c}2^{2(|\lambda|-\ell_\lambda)(s+t)}|(Fv)_{\lambda}|^2
\lesssim\|v\|_{t}^2
\qquad\textrm{for}\quad
v\in H^t(\Omega).
\end{equation}
\end{lemma}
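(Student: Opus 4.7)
The plan is to apply the Schur test (Lemma \ref{l:schur}) after converting the inequality into a statement about a bi-infinite matrix. By the first norm equivalence in \eqref{e:normeq}, we have $\|v\|_t^2 \eqsim \sum_{\mu\in\nabla} |v_\mu|^2$ with $v_\mu = \langle v,\tilde\psi_\mu\rangle$, while the expression to bound is exactly $\|Fv\|_{\ell^2(\Lambda^c,w)}^2$, where $w_\lambda := 2^{(|\lambda|-\ell_\lambda)(s+t)}$ and $(Fv)_\lambda = \sum_\mu F_{\lambda\mu}v_\mu$ with $F_{\lambda\mu}=\langle \psi_\lambda,T\psi_\mu\rangle$ restricted to the far-field regime. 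Thus the lemma reduces to boundedness of the matrix $F$ from $\ell^2(\nabla)$ into the weighted space $\ell^2(\Lambda^c,w)$.

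The engine is the off-diagonal decay estimate $|T_{\lambda\mu}|\lesssim (2^{-||\lambda|-|\mu||/2}/\delta(\lambda,\mu))^{n+2t+2p}$ from \cite{Stev04}, combined with the far-field hypothesis $\dist(\supb_\lambda,\supb_\mu)\geq \eps\max(2^{-\ell_\lambda},2^{-|\mu|})$. Since the cancellation order $p$ can be taken arbitrarily large by an appropriate choice of basis, the exponent $N:=n+2t+2p$ is at our disposal. For the first Schur sum $\sum_\mu |F_{\lambda\mu}|\omega_\mu$ at fixed $\lambda$, I would partition the $\mu$'s by level $|\mu|=m$ and by dyadic distance annuli $\{\mu:\dist(\supb_\lambda,\supb_\mu)\sim 2^j R_0\}$ with $R_0=\eps\max(2^{-\ell_\lambda},2^{-m})$. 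Local finiteness bounds the number of $\mu$'s in each annulus by $\lesssim (2^j\cdot 2^m R_0)^n$, and combined with the kernel decay and the explicit lower bounds on $\delta(\lambda,\mu)$, the series over $j$ and over $m$ (split by $m\gtreqless\ell_\lambda$ and $m\gtreqless|\lambda|$) become geometric and convergent once $N$ is large enough. The second Schur sum $\sum_\lambda |F_{\lambda\mu}|w_\lambda^2\theta_\lambda$ is handled symmetrically, after a further split according to the local mesh scale $\ell_\lambda$.

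The main obstacle is the asymmetry introduced by the weight $w_\lambda$, which depends separately on the wavelet scale $|\lambda|$ and on the (unrelated) local mesh scale $\ell_\lambda$; this breaks the natural scale-symmetry of the kernel estimate and must be absorbed into the Schur weights $\theta_\lambda,\omega_\mu$ with care. The rescue comes from three facts that I plan to exploit in tandem: (i) $\lambda\in\Lambda^c$ forces $|\lambda|\gtrsim \ell_\lambda$ up to an additive constant fixed by $\delta$, so the exponent $|\lambda|-\ell_\lambda$ is effectively nonnegative and the weight $w_\lambda\geq c$ is harmless on the coarse side; (ii) the assumption $s+t>0$ ensures that the weight only penalises wavelets finer than the local mesh; and (iii) the far-field condition yields $\dist(\supb_\lambda,\supb_\mu)\geq\eps 2^{-\ell_\lambda}$, so $\delta(\lambda,\mu)\gtrsim 2^{|\lambda|-\ell_\lambda}$ whenever $|\mu|\geq|\lambda|$, which is precisely the factor needed to convert $2^{(|\lambda|-\ell_\lambda)(s+t)}$ into excess off-diagonal decay, provided $N$ is chosen large enough in terms of $s+t$ and $n$.
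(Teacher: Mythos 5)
Your proposal follows essentially the same route as the paper's proof: reduce to boundedness of the far-field matrix block via the Riesz basis property and the Schur test, feed in the decay estimate $|T_{\lambda\mu}|\lesssim(2^{-||\lambda|-|\mu||/2}/\delta(\lambda,\mu))^{n+2t+2p}$ from \cite{Stev04}, count wavelets in dyadic distance annuli level by level (this is exactly the summation bound $\sum_{\{|\mu|=m,\,\delta(\lambda,\mu)\geq R\}}\delta(\lambda,\mu)^{-(n+\beta)}\lesssim R^{-\beta}2^{n\max\{0,m-|\lambda|\}}$ the paper invokes), and exploit the far-field lower bounds on $\delta(\lambda,\mu)$ together with $s+t>0$, $p+t>0$ and $p$ large to absorb the weight $2^{(|\lambda|-\ell_\lambda)(s+t)}$. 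The paper's concrete choice of Schur weights is $\theta_\lambda=2^{-|\lambda|n/2-(|\lambda|-\ell_\lambda)(p+t)}$, $\omega_\mu=2^{-|\mu|n/2}$, and the "further split" you anticipate in the second Schur sum is the case distinction $2s+t-p\lessgtr0$; otherwise the arguments coincide.
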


\begin{proof}
We apply the Schur test (Lemma \ref{l:schur})
with $\theta_\lambda=2^{-|\lambda|n/2-(|\lambda|-\ell_\lambda)(d+t)}$, $\omega_\mu=2^{-|\mu|n/2}$, 
and $w_\lambda=2^{(|\lambda|-\ell_\lambda)(s+t)}$.
First we shall bound
\begin{equation}
\sum_{\mu\in\nabla}|F_{\lambda\mu}|2^{-|\mu|n/2}
\lesssim
\sum_{m\in\N} 2^{-mn/2} \sum_{\{|\mu|=m,\,F_{\lambda\mu}\neq0\}} 
\left(\frac{2^{-||\lambda|-m|/2}}{\delta(\lambda,\mu)}\right)^{n+2t+2p},
\end{equation}
by a multiple of $\theta_\lambda$.
In the inner sum, we must sum over all $\mu$ such that
$\delta(\lambda,\mu)\gtrsim2^{|\lambda|-\ell_\lambda}$
when $|\mu|=m\geq|\lambda|$,
and such that $\delta(\lambda,\mu)\gtrsim2^{\max\{0,m-\ell_\lambda\}}$ when $|\mu|=m\leq|\lambda|$.
By locality of the wavelets and the Lipschitz property $\Omega$,
for $\lambda\in\nabla$, $m\in\N$ and $\beta>0$, we have
\begin{equation}
\sum_{\{\mu:|\mu|=m,\,\delta(\lambda,\mu)\geq R\}}
\delta(\lambda,\mu)^{-(n+\beta)} \lesssim R^{-\beta} 2^{n\max\{0,m-|\lambda|\}},
\end{equation}
which appears, e.g., in \cite{Stev04}.
This gives
\begin{equation}
\sum_{\{\mu:|\mu|=m,\,\delta(\lambda,\mu)
  \gtrsim2^{|\lambda|-\ell_\lambda}\}}
\delta(\lambda,\mu)^{-(n+2t+2p)}
\lesssim
2^{-(|\lambda|-\ell_\lambda)(2p+2t)+(m-|\lambda|)n/2},
\end{equation}
for $m\geq|\lambda|$, and
\begin{equation}
\sum_{\{\mu:|\mu|=m,\,\delta(\lambda,\mu)
  \gtrsim2^{\max\{0,m-\ell_\lambda\}}\}}
\delta(\lambda,\mu)^{-(n+2t+2p)}
\lesssim
2^{-(|\lambda|-\ell_\lambda)(p+t)+(m-|\lambda|)n/2},
\end{equation}
for $m\leq|\lambda|$.
By using the preceding estimates, we conclude
\begin{equation}
\begin{split}
\sum_{\mu\in\nabla}|F_{\lambda\mu}|2^{-|\mu|n/2}
&\lesssim
\sum_{m\leq|\lambda|} 2^{-(|\lambda|-\ell_\lambda)(p+t)-(|\lambda|-m)(p+t)-|\lambda|n/2}\\
&\quad+ \sum_{m>|\lambda|}
2^{-(|\lambda|-\ell_\lambda)(2p+2t)-(m-|\lambda|)(p+t)-|\lambda|n/2}\\
&\lesssim
2^{-|\lambda|n/2-(|\lambda|-\ell_\lambda)(p+t)}.
\end{split}
\end{equation}

Now we shall bound
\begin{equation}\label{e:ff2}
\sum_{\lambda\in\Lambda^c} w_\lambda^2\theta_\lambda |F_{\lambda\mu}|
\lesssim
\sum_{\ell} \sum_{\{|\lambda|=\ell,\,F_{\lambda\mu}\neq0\}} 
2^{(\ell-\ell_\lambda)(2s+t-p)}
2^{-\ell n/2}
\left(\frac{2^{-|\ell-|\mu||/2}}{\delta(\lambda,\mu)}\right)^{n+2t+2p},
\end{equation}
by a multiple of $\omega_\mu$.
By construction, for $\lambda\in\Lambda^c$ and $\mu\in\nabla$ with
$F_{\lambda\mu}\neq0$,
we have $2^{-\min\{\ell_\lambda,|\mu|\}}
\lesssim\dist(\supb_\lambda,\supb_\mu)$,
implying that
\begin{equation}
2^{-\ell_\lambda}\lesssim\dist(\supb_\lambda,\supb_\mu),
\qquad\textrm{and}\qquad
\dist(\supb_\lambda,\supb_\mu) \gtrsim 2^{-\min\{|\lambda|,|\mu|\}}.
\end{equation}
In particular, the second estimate tells us that
$\delta(\lambda,\mu)\gtrsim1$ in the (inner) sum in \eqref{e:ff2}.
So for $\ell\leq|\mu|$, we estimate the inner sum as
\begin{equation}
\begin{split}
\sum_{|\lambda|=\ell} 
2^{(\ell-\ell_\lambda)(2s+t-p)}
&2^{-\ell n/2}
|F_{\lambda\mu}|
\leq
\sum_{|\lambda|=\ell} 
2^{2(\ell-\ell_\lambda)(s+t)}
2^{-\ell n/2}
|F_{\lambda\mu}|\\
&\lesssim
\sum_{\{|\lambda|=\ell,\,\delta(\lambda,\mu)\gtrsim1\}} 
\delta(\lambda,\mu)^{2(s+t)}
2^{-\ell n/2}
\left(\frac{2^{-(|\mu|-\ell)/2}}{\delta(\lambda,\mu)}\right)^{n+2t+2p}\\
&\lesssim
2^{-\ell n/2}
2^{-(|\mu|-\ell)(n/2+t+p)}
=
2^{-(|\mu|-\ell)(t+p)}
2^{-|\mu| n/2},
\end{split}
\end{equation}
where we have used $\ell\geq\ell_\lambda$ and $p+t>0$ in the first
inequality, and $s+t>0$ in the second.
Assuming that $2s+t-p\leq0$, for $\ell\geq|\mu|$, we estimate
\begin{equation}
\begin{split}
\sum_{|\lambda|=\ell} 
2^{(\ell-\ell_\lambda)(2s+t-p)}
2^{-\ell n/2}
|F_{\lambda\mu}|
&\leq
\sum_{|\lambda|=\ell} 
2^{-\ell n/2}
|F_{\lambda\mu}|\\
&\lesssim
2^{-\ell n/2}
2^{-(\ell-|\mu|)(n/2+t+p)}
2^{(\ell-|\mu|)n}\\
&=
2^{-(\ell-|\mu|)(p+t)}
2^{-|\mu| n/2}.
\end{split}
\end{equation}
Now if $2s+t-p>0$, for $\ell\geq|\mu|$, we have
\begin{equation}
\begin{split}
\sum_{|\lambda|=\ell} 
2^{(\ell-\ell_\lambda)(2s+t-p)}
2^{-\ell n/2}
|F_{\lambda\mu}|
&\lesssim
\sum_{|\lambda|=\ell} 
2^{(\ell-|\mu|)(2s+t-p)}\delta(\lambda,\mu)^{2s+t-p}
2^{-\ell n/2}
|F_{\lambda\mu}|\\
&\lesssim
2^{(\ell-|\mu|)(2s+t-p)} 
2^{-\ell n/2}
2^{-(\ell-|\mu|)(n/2+t+p)}
2^{(\ell-|\mu|)n}\\
&=
2^{-(\ell-|\mu|)(2p-2s)}
2^{-|\mu| n/2}.
\end{split}
\end{equation}
From the geometric decay of the preceding estimates,
we infer
\begin{equation}
\sum_{\lambda\in\Lambda^c}
2^{(|\lambda|-\ell_\lambda)(2s+t-p)}
2^{-|\lambda| n/2}
|F_{\lambda\mu}|
\lesssim
2^{-|\mu| n/2},
\end{equation}
which completes the proof.
\end{proof}

For the remaining terms, i.e., for the near-field terms,
we employ the simple estimate
\begin{equation}\label{e:near-est-naive}
\langle \psi_{\lambda}, T \psi_{\mu} \rangle
\lesssim 
\|\psi_{\lambda}\|_{t-\sigma} \|T\psi_{\mu}\|_{-t+\sigma}
\lesssim 
\|\psi_{\lambda}\|_{t-\sigma} \|\psi_{\mu}\|_{t+\sigma}
\lesssim 
2^{-|\lambda|\sigma+|\mu|\sigma}.
\end{equation}
We will see that this estimate gives sub-optimal results, that in general require
the manifold $\Omega$ to be smoother than Lipschitz.
There exist sharper estimates in the literature,
cf. \cite{Stev04,DHS06},
that exploit the piecewise smooth nature of the wavelets.
However, the author was not able to make use of them to get better results.
The best attempts so far by using the estimates from \cite{DHS06} resulted in
logarithmic divergences,
and the estimates from \cite{Stev04} in general need $\Omega$ to be smoother
than Lipschitz, thus do not seem to give improvements in this regard.

\begin{lemma}[Near-field interactions]\label{l:near-field}
Define the operator $N:H^{t}(\Omega)\to H^{-t}(\Omega)$ with matrix elements
$N_{\lambda\mu} = T_{\lambda\mu}$
for $\lambda\in\Lambda^c$ and $\mu\in\nabla$ satisfying
$\dist(\supb_\lambda,\supb_\mu)\leq2^{-|\mu|}$ and $\eps2^{|\mu|}\leq2^{\ell_\lambda}$, 
and $N_{\lambda\mu}=0$ otherwise.
Let $2\sigma>n$.
Then we have
\begin{equation}
\sum_{\lambda\in\Lambda^c}2^{2(|\lambda|-\ell_\lambda)(s+t)}|(Nv)_{\lambda}|^2
\lesssim
\|v\|_t^2
\qquad\textrm{for}\quad
v\in S^d_P.
\end{equation}
\end{lemma}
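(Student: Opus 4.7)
The plan is to bound the sum by applying the Schur test (Lemma~\ref{l:schur}) to the matrix $[N_{\lambda\mu}]$. By the Riesz basis property of $\Psi$, expanding $v=\sum_\mu v_\mu\psi_\mu$ with $\|v\|_t^2\eqsim\sum_\mu|v_\mu|^2$, the claim is equivalent to saying that the weighted matrix $M_{\lambda\mu}=2^{(|\lambda|-\ell_\lambda)(s+t)}|N_{\lambda\mu}|$, restricted to the near-field pairs, is a bounded operator on $\ell^2$. The only input on the kernel entries is the elementary bound \eqref{e:near-est-naive}, $|N_{\lambda\mu}|\lesssim 2^{-(|\lambda|-|\mu|)\sigma}$, which is exactly what the hypothesis $T,T^*\colon H^{t+\sigma}(\Omega)\to H^{-t+\sigma}(\Omega)$ provides.

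The combinatorics of the near-field is the key geometric input. The conditions $\dist(\supb_\lambda,\supb_\mu)\leq 2^{-|\mu|}$ and $\eps 2^{|\mu|}\leq 2^{\ell_\lambda}$, together with local finiteness of the wavelets, imply that for fixed $\lambda$ there are $O(1)$ near-field wavelets $\mu$ at each scale $m=|\mu|\leq\ell_\lambda+c_0$. Conversely, for fixed $\mu$ the near-field $\lambda\in\Lambda^c$ all lie in a bounded neighborhood of $\supb_\mu$ of size $\sim 2^{-|\mu|}$, they satisfy $|\lambda|\geq\ell_\lambda-c_1$, and by local quasi-uniformity their mesh scales $\ell_\lambda$ all agree up to a constant with a single value $\ell^*\geq|\mu|-c_0$; at each scale $\ell=|\lambda|\geq|\mu|$ there are $\lesssim 2^{(\ell-|\mu|)n}$ such $\lambda$. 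I would then apply the Schur test with weights $\theta_\lambda=2^{-|\lambda|n/2-(|\lambda|-\ell_\lambda)a}$ and $\omega_\mu=2^{-|\mu|n/2}$, where $a=\sigma-n/2>0$. Condition~1 collapses to a geometric sum over $m\leq\ell_\lambda+c_0$ dominated by its top term---which is where $\sigma>n/2$ is first used---matching $\theta_\lambda$ for the chosen $a$. Condition~2 becomes a geometric sum over $\ell\geq\ell^*$ whose ratio, after incorporating the volume factor $2^{(\ell-|\mu|)n}$, the kernel decay $2^{-(\ell-|\mu|)\sigma}$, and the residual weight $2^{2(|\lambda|-\ell_\lambda)(s+t)}$, becomes negative exactly under $2\sigma>n$, with the remaining $s+t$ terms absorbed by $\ell^*\geq|\mu|$ and the theorem-level hypothesis $s+t<\sigma$.

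The main obstacle will be Condition~2. Three exponentials compete there: the combinatorial factor $2^{(\ell-|\mu|)n}$ from counting near-field wavelets, the kernel decay $2^{-(\ell-|\mu|)\sigma}$ from \eqref{e:near-est-naive}, and the weight $2^{2(|\lambda|-\ell_\lambda)(s+t)}$ on the $\lambda$-side. The delicate step is that the mesh-dependent factor $2^{-2\ell_\lambda(s+t)}$ must be tied back to a $\mu$-dependent quantity; this is where the observation $\ell_\lambda\approx\ell^*\geq|\mu|$ is essential, since otherwise $\ell^*$ could be arbitrarily large and the sum would carry an uncontrolled factor $2^{\ell^*\cdot(\text{positive})}$. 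The assumption $2\sigma>n$ supplies exactly the minimum decay needed to overcome the $n$-dimensional volume factor. As in the far-field lemma, the restriction $v\in S^d_P$ enters only via the Riesz norm equivalence; the real role of the current regime is that the naive bound \eqref{e:near-est-naive} forces $\sigma>n/2$, and, as the author notes after the statement, sharper kernel estimates from \cite{Stev04,DHS06} are either logarithmically divergent or require smoother $\Omega$ and so do not loosen this restriction.
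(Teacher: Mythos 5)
Your proposal takes essentially the same route as the paper: a Schur test (Lemma \ref{l:schur}) on the near-field block, with the only kernel input being the crude estimate \eqref{e:near-est-naive}, the $O(1)$-per-scale count of near-field $\mu$ for fixed $\lambda$, and the $2^{n(\ell-|\mu|)}$-per-scale count of near-field $\lambda$ for fixed $\mu$. Your weights $\theta_\lambda=2^{-\sigma|\lambda|+(\sigma-n/2)\ell_\lambda}$, $\omega_\mu=2^{-n|\mu|/2}$ differ from the paper's $\theta_\lambda=2^{-\sigma|\lambda|+(2\sigma-n)\ell_\lambda}$, $\omega_\mu=2^{(\sigma-n)|\mu|}$, but both choices close the estimate, and your geometric-ratio requirement in Condition~2 (which is really $2(s+t)<2\sigma-n$, not merely $2\sigma>n$) is exactly the same implicit strengthening of $s+t<\sigma$ that the paper's own second display relies on, so no point of difference there.

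One intermediate claim is false as stated: it is not true that ``by local quasi-uniformity'' the mesh scales $\ell_\lambda$ of all near-field $\lambda$ for a fixed $\mu$ coincide with a single value $\ell^*$ up to a constant. The $2^{-|\mu|}$-neighbourhood of $\supb_\mu$ may contain arbitrarily many triangles of arbitrarily disparate (small) sizes; the K-mesh property \eqref{e:K-mesh} only controls ratios of \emph{touching} triangles, so $\ell_\lambda$ ranges over all of $[|\mu|-\log_2(1/\eps),\infty)$. Fortunately your argument does not actually need the two-sided comparison: what it needs is (i) the one-sided bound $\ell_\lambda\geq|\mu|-\log_2(1/\eps)$, which comes directly from the near-field condition $\eps2^{|\mu|}\leq2^{\ell_\lambda}$ and not from quasi-uniformity, and (ii) the volume bound $\sum_\tau 2^{-n\ell_\tau}\lesssim\sum_\tau\vol(\tau)\lesssim2^{-n|\mu|}$ over the triangles meeting the $2^{-|\mu|}$-neighbourhood of $\supb_\mu$, which follows from shape regularity. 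This is how the paper organizes Condition~2: group the near-field $\lambda$ by the triangle $\tau$ determining $\ell_\lambda$, sum the (convergent) geometric series over $|\lambda|\geq\ell_\tau$ first, and only then sum over $\tau$ using the volume bound, invoking $\ell_\tau\geq|\mu|-c$ to absorb the leftover $2^{(n/2-\sigma)\ell_\tau}$ into a power of $2^{-|\mu|}$. With that replacement your computation goes through verbatim and reproduces $\omega_\mu$.
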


\begin{proof}
We apply the Schur test (Lemma \ref{l:schur})
with $\theta_\lambda= 2^{\varrho\ell_\lambda-\sigma|\lambda|}$, $\omega_\mu=2^{(\varrho-\sigma)|\mu|}$, 
and $w_\lambda=2^{(|\lambda|-\ell_\lambda)(s+t)}$, where
$\varrho=2 \sigma-n>0$.
We have
\begin{equation}
\sum_{\mu\in\nabla}|N_{\lambda\mu}| 2^{(\varrho-\sigma)|\mu|}
\lesssim
\sum_{m\leq\ell_\lambda} 2^{-\sigma|\lambda|+\varrho m}
\lesssim
2^{\varrho\ell_\lambda - \sigma|\lambda|},
\end{equation}
and
\begin{equation}
\begin{split}
\sum_{\lambda\in\Lambda^c} w_\lambda^2\theta_\lambda |N_{\lambda\mu}|
&\lesssim
\sum_{\{\tau\in P:\dist(\tau,\supb_\mu)\leq2^{-|\mu|}\}} \sum_{\ell\geq\ell_\tau} 
2^{2(\ell-\ell_\tau)(s+t)}
2^{\varrho \ell_\tau - \sigma\ell}
2^{-\sigma (\ell-|\mu|)} 
2^{n(\ell-\ell_\tau)}\\
&\lesssim
\sum_{\{\tau\in P:\dist(\tau,\supb_\mu)\leq2^{-|\mu|}\}}
2^{\varrho \ell_\tau - \sigma\ell_\tau}
2^{-\sigma (\ell_\tau-|\mu|)}\\
&=
\sum_{\{\tau\in P:\dist(\tau,\supb_\mu)\leq2^{-|\mu|}\}}
2^{\sigma|\mu| - n\ell_\tau}
\lesssim
\sum_{\{\tau\in P:\dist(\tau,\supb_\mu)\leq2^{-|\mu|}\}}
2^{\sigma|\mu|}\vol(\tau)\\
&\lesssim
2^{(\sigma-n)|\mu|}
=
2^{(\varrho-\sigma)|\mu|},
\end{split}
\end{equation}
which establishes the proof.
\end{proof}

We end this section by assembling the promised proof.

\begin{proof}[Proof of Theorem \ref{t:inverse-ineq}]
Let $v\in S^d_P$ be as in the theorem,
and consider the decomposition
\begin{equation}
Tv=Q_{\Lambda}Tv + (I-Q_{\Lambda})Tv.
\end{equation}
The first term on the right hand side is the low frequency part, treated in Lemma \ref{l:low-freq}.
As discussed in \eqref{e:basic-wav-norm}, the second term is bounded as
\begin{equation}
\sum_{z\in N_P}h_z^{2(s+t)}\|(I-Q_{\Lambda})Tv\|_{s,\omega(z)}^2
\lesssim
\sum_{\lambda\in\Lambda^c}2^{2(|\lambda|-\ell_\lambda)(s+t)}|(Tv)_{\lambda}|^2.
\end{equation}
Comparing the definitions of the near- and far-field interactions, we see that 
the combination of Lemma \ref{l:far-field} and Lemma \ref{l:near-field} takes care
of all the contributions, {\em except} those coming from the components $v_\mu$ with 
$\dist(\supb_\lambda,\supb_\mu)\leq\eps2^{-\ell_\lambda}$ and $2^{-|\mu|}\leq\eps2^{-\ell_\lambda}$.
But by choosing $\eps>0$ small enough, we can absorb the contributions of those missing components
into the local interactions, which is accounted in Lemma \ref{l:local}.
Note that there is no issue with the dependence on $\eps$ of the hidden constants in Lemma \ref{l:far-field} and Lemma \ref{l:near-field},
since the choice of $\eps>0$ is done once and for all, uniformly in $v\in S^d_P$ for $P\in\adm(P_0)$.
\end{proof}

\section[Conclusion and References]{Concluding remarks}
\label{s:conclude}

In this paper, we proved geometric error reduction for
three kinds of adaptive boundary element methods for positive, negative, as well as zero
order operator equations, without relying on a saturation-type assumption.
In fact, several types of saturation assumptions follow from our work as a corollary.
Moreover, bounds on the convergence rates are obtained that are in
a certain sense optimal.

We established several new global- and local discrete bounds for a number of residual-type error estimators for positive, negative, as well zero order
boundary integral equations, including the estimators from
\citet{CMS01}, \citet{CMPS04}, and \cite{Faer00,Faer02}.
Some of our bounds contain oscillation terms,
that give useful estimates on how far the current mesh is from saturation.
In order to handle the oscillation terms, which turned out to be not strtaightforward,
we introduced an inverse-type inequality involving
boundary integral operators and locally refined meshes.
Our proof of the inequality in general requires the underlying surface $\Gamma$ to be
$C^{1,1}$ or smoother, but for open surfaces it allows the boundary of
$\Gamma$ to be Lipschitz.
So in general, polyhedral surfaces are ruled out, which is very likely an artifact
of the proof, since 
in \cite{FKMP11a}, the inequality is proven for a model negative order operator on polyhedral surfaces.

The current work gives rise to its fair share of open problems, and re-emphasizes some existing ones.
The following is an attempt at identifying the most pressing of them.
\begin{itemize}
\item
to prove the inverse-type inequality in \S\ref{s:inverse} for
polyhedral surfaces, for $t\geq0$.
This is established for Symm's integral operator (with $t=-\frac12$) in \cite{FKMP11a}.
\item
to characterize the approximation classes associated to the proposed
adaptive BEMs.
Some progress on this question has been made in \cite{AFFKP12}.
\item
to generalize the proofs to higher order boundary element methods.
\item
to extend the analysis to transmission problems, and adaptive FEM-BEM coupling.
\item
convergence rate for adaptive
BEMs based on non-residual type error estimators.
\item
complexity analysis, i.e., the problem of quadrature and linear algebra solvers.
In particular, one would like to know how accurate the residual should be computed in the error estimators.
\end{itemize}

\section*{Acknowledgements}

I would like to thank Dirk Praetorius for carefully reading an earlier version of this manuscript, and for making several important comments.
I thank the anonymous referees for their reviews and suggestions.
I also thank Michael Renardy, Nilima Nigam, and
Elias Pipping over at \href{http://mathoverflow.net}{\texttt{mathoverflow.net}} for pointing out the reference \cite{Kim07},
and Doyoon Kim for making his paper available to me.
This work is supported by an NSERC Discovery Grant and an FQRNT Nouveaux
Chercheurs Grant.



\bibliographystyle{plainnat}
\bibliography{../bib/timur}

\end{document}